\pdfoutput=1

\documentclass{article}
\usepackage{graphicx,amsfonts,amsthm, amsmath,amssymb,tikz,tikz-cd,mathrsfs,hyperref} 


\usepackage{wasysym}

\title{Wreath Macdonald polynomials, quiver varieties, and quasimap counts}
\author{Jeffrey Ayers and Hunter Dinkins}
\date{}

\theoremstyle{definition}
\newcommand{\core}{\text{core}}
\newcommand{\quot}{\text{quot}}
\newcommand{\tb}{\mathcal{V}}

\newcommand{\dv}{\mathsf{v}}
\newcommand{\dw}{\mathsf{w}}

\newcommand{\bT}{\mathsf{T}}
\newcommand{\bA}{\mathsf{A}}
\newcommand{\parts}{\mathcal{P}}
\newcommand{\C}{\mathbb{C}}
\newcommand{\Q}{\mathbb{Q}}
\newcommand{\desc}{\mathcal{D}}
\newcommand{\partitions}{\mathcal{P}}
\newcommand{\qta}{\mathscr{U}_{\hbar}(\ddot{\mathfrak{g}})}
\newcommand{\moalgebra}{\mathscr{U}^{\text{OS}}}
\newcommand{\uqgh}{\mathscr{U}_{\hbar}(\dot{\mathfrak{g}})}
\newcommand{\rmat}{\mathcal{R}}
\newcommand{\fock}{\mathsf{G}}
\newcommand{\ferfock}{\mathsf{F}}
\newcommand{\bos}{\mathsf{B}}
\newcommand{\qint}[2]{\left[#1\right]_{#2}}
\newcommand{\swap}{\mathsf{swap}}
\newcommand{\pneg}{\mathsf{neg}}
\newcommand{\inv}{\mathsf{inv}}
\newcommand{\extpow}{\bigwedge}
\newcommand{\canon}{\mathcal{K}}

\usepackage[colorinlistoftodos]{todonotes}

\usepackage{biblatex}
\addbibresource{bibCyclic.bib}

\newtheorem{thm}{Theorem}[section]
\newtheorem{cor}[thm]{Corollary}
\newtheorem{lem}[thm]{Lemma}
\newtheorem{prop}[thm]{Proposition}
\newtheorem{conj}[thm]{Conjecture}
\newtheorem{defn}[thm]{Definition}
\newtheorem{ex}[thm]{Example}

\newtheorem{rem}[thm]{Remark}
\newtheorem{ques}[thm]{Question}

\newcommand{\hilb}{\text{Hilb}}
\newcommand{\qv}{\mathcal{M}}
\newcommand{\stackqv}{\mathcal{M}^{\text{st}}}
\newcommand{\ch}{\text{char}}
\newcommand{\qm}{\text{QM}}

\newcommand{\stab}{\text{Stab}}
\newcommand{\qmpol}{\mathscr{T}^{1/2}}
\newcommand{\ns}{\text{ns }}
\newcommand{\rel}{\text{rel }}
\newcommand{\cver}[1]{\hat{V}^{\left(#1 \right)}}
\newcommand{\ver}[1]{V^{\left(#1 \right)}}
\newcommand{\ev}{\text{ev}}
\newcommand{\vrs}{\widehat{\mathcal{O}}_{\text{vir}}}
\newcommand{\Hom}{\text{Hom}}

\begin{document}
\maketitle

\begin{abstract}
    We study the $K$-theoretic enumerative geometry of cyclic Nakajima quiver varieties, with particular focus on $\text{Hilb}^{n}([\mathbb{C}^{2}/\mathbb{Z}_{l}])$, the equivariant Hilbert scheme of points on $\mathbb{C}^2$. The direct sum over $n$ of the equivariant $K$-theories of these varieties is known to be isomorphic to the ring symmetric functions in $l$ colors, with structure sheaves of torus fixed points identified with wreath Macdonald polynomials. Using properties of wreath Macdonald polynomials and the recent identification of the Maulik-Okounkov quantum affine algebra for cyclic quivers with the quantum toroidal algebras of type $A$, we derive an explicit formula for the generating function of capped vertex functions of $\text{Hilb}^{n}([\mathbb{C}^{2}/\mathbb{Z}_{l}])$ with descendants given by exterior powers of the $0$th tautological bundle. We also sharpen the large framing vanishing results of Okounkov, providing a class of descendants and cyclic quiver varieties for which the capped vertex functions are purely classical. Our results also suggest certain integrality and wall-crossing conjectures for capped vertex functions.

\end{abstract}
{\small\tableofcontents\par}

\setcounter{footnote}{0}
\renewcommand{\thefootnote}{\arabic{footnote}}

\section{Introduction}

\subsection{Enumerative geometry of quiver varieties}
The pioneering work of Maulik-Okounkov \cite{MO} demonstrated that the quantum cohomology of Nakajima quiver varieties is controlled by certain geometrically constructed quantum groups. The $K$-theoretic version of this work, relating quasimaps counts to quantum affine algebras, was developed in \cite{Ok17} and \cite{OkSm}. This interplay between geometry and representation theory has lead to a large body of work in areas like symmetric functions, integrable systems, characteristic classes, and $q$-difference equations, \cite{AOElliptic,AgOk, BRbows, NegGor, KorZeit2,indstab2,OkSm, MirSym2, MirSym1, qdehilb,zhu2}. On the one hand, $K$-theoretic counts present additional technical challenges compared to cohomological counts; on the other hand, they are required to observe additional structure, most importantly 3d mirror symmetry \cite{AOElliptic,AgOk, dinkms2, dinkms1, dinksmir2, dinksmir}. By the degeneration and glue formulas \cite{Ok17}, general $K$-theoretic quasimap counts in Nakajima quiver varieties are built from the following three objects: the (bare) vertex, the capping operator, and the capped vertex. Each corresponds to a different choice of ``boundary condition" for quasimaps. We touch briefly on the the first two before turning in more detail to the third, which is the main focus of this paper.

For many examples of interest, the bare vertex can be computed by localization, see \cite{AgOk, dinksmir2}. It provides a class of hypergeometric series generalizing the well-known $_{n+1} \phi _{n}$ basic hypergeometric series studied at length in, for example, \cite{hypergeo}. Vertex functions are known to satisfy certain $q$-difference equations with regular singularities. The monodromy of these equations was determined in \cite{AOElliptic} and \cite{indstab2}.

The capping operator is the fundamental solution of the quantum difference equation, which is the $K$-theoretic version of the quantum differential equation. For Nakajima quiver varieties, these equations generalize the qKZ and dynamical difference equations studied in \cite{FR, TV3, TV2}. The geometric approach to these equations was used by Aganagic-Okounkov in \cite{AgOk} to provide integral formulas for solutions.

The third fundamental curve count, the capped vertex, is the main object of interest for this paper. In some ways, the capped vertex is simpler than the first two curve counts: it is defined in non-localized $K$-theory and is purely classical for quiver varieties with large enough framing, a phenomenon known as ``large framing vanishing" and proved in \cite{Ok17}. It was proved by Smirnov \cite{Sm16} for the Jordan quiver and by Zhu \cite{zhu3} for general quivers (and earlier in cohomology in \cite{PPRationality}) that the capped vertex is a rational function of the quantum parameters (typically called K\"ahler parameters in this setting). On the other hand, computing the capped vertex explicitly in examples is difficult; the localization theorem essentially reduces the problem to computing both the bare vertex and capping operator. Building on \cite{Sm16}, the recent work of Smirnov and the first author \cite{AySm} developed explicit formulas for the capped vertex for what is arguably the most important quiver variety: the Hilbert scheme of points in the plane. Their formulas reveal a beautiful relationship with Macdonald polynomials and give an enumerative-geometric interpretation to certain evaluations of them.

In this paper, we push the methods of Ayers-Smirnov further, studying the capped vertex functions for cyclic quiver varieties. Our main theorem, Theorem \ref{mainthmintro} below, describes the capped vertex functions for certain cyclic quiver varieties in terms of wreath Macdonald polynomials \cite{BFwreath, Haiman1}. Our main theorem can be interpreted as describing capped vertex functions as certain evaluations of wreath Macdonald polynomials, see Theorem \ref{mainthm2intro}.

While replacing Macdonald polynomials with wreath Macdonald polynomials aligns with what is generally expected when moving from the Jordan quiver to cyclic quivers, the technical details of our proof rely heavily on the exact choice of cyclic quivers and descendants appearing in our main theorem. While some of our tools apply more generally than in Theorem \ref{mainthmintro} (for example our results on large framing vanishing in Theorem \ref{lfvintro}), we are unsure if similarly nice formulas exist for other capped vertex functions. We now turn to discuss in more detail the setup and results of this paper.

\subsection{The capped descendant vertex}

Let $\qv:=\hilb^{n}([\mathbb{C}^{2}/\Gamma_{l}])$ where $\Gamma_{l}=\mathbb{Z}/l\mathbb{Z}$, which acts on $\mathbb{C}^2$ by $\zeta \cdot (x,y)=(\zeta x, \zeta^{-1} y)$ where $\zeta$ is a primitive $l$th root of unity.. Concretely, the variety $\qv$ parameterizes $\Gamma_{l}$ invariant ideals $I \subset \mathbb{C}[x,y]$ such that $\ch_{\Gamma_{l}} \left(\mathbb{C}[x,y]/I\right)=\sum_{i=0}^{l-1} n \chi^{i}$, where $\chi^{i}$ are the irreducible characters of $\Gamma_{l}$. In particular, $\qv\subset \hilb^{nl}(\mathbb{C}^{2})$.

The variety $\qv$ is an example of a Nakajima quiver variety \cite{NakALE, NakQv} with dimension vector $\dv=(n,n,\ldots,n)$ and thus has a canonical presentation as a geometric invariant theory quotient
$$
\qv=\mu^{-1}(0)/\!\!/_{\theta} G_{\dv}, \quad G_{\dv}=\prod_{i=0}^{l-1} GL(n)
$$
This description is reviewed in section \ref{NQVsection}. Hence we can apply the general machinery of quasimaps to GIT quotients developed in \cite{qm}. To that end, let $\qm^{d}$ be the moduli space of stable quasimaps of degree $d \in \mathbb{Z}^{l}$ relative to the point $0 \in \mathbb{P}^{1}$. It compactifies the space of genuine maps from $\mathbb{P}^{1}$ to $\qv$.

By definition of $\qm^{d}$, there are evaluation maps
$$
\begin{tikzcd}
    & \qm^{d} \arrow{ld}{\ev_{0}} \arrow{rd}{\ev_{\infty}} & \\
    \qv & & \stackqv:=\left[ \mu^{-1}(0)/G_{\dv}\right]
\end{tikzcd}
$$
The notation $[\mu^{-1}(0)/G_{\dv}]$ in the bottom right corner of the diagram refers to the stack quotient. By definition of $\qv$, there is an inclusion $\iota: \qv \hookrightarrow \stackqv$. There is an action of a torus $\bT$ on each piece of the above diagram such that the evaluation maps are $\bT$-equivariant. 

Then the capped vertex function with descendant $\tau \in \desc:= K_{\bT}(\stackqv)$ is defined as

$$
\cver{\tau}= \canon^{-1/2} \sum_{d} \ev_{0,*}\left(\ev_{\infty}^{*}(\tau) \otimes \vrs^{d} \right) z^{d} \in K_{\bT}(\qv)[q^{\pm 1}][[z]]
$$
where $\vrs^{d}$ is the symmetrized virtual structure sheaf on $\qm^{d}$ and $\canon$ is the canonical bundle of $\qv$\footnote{The presence of $\canon^{-1/2}$ is simply a normalization choice and means that $\cver{\tau}=\iota^{*}\tau+O(z)$.}. The sum is taken over all degrees $d$ such that $\qm^{d}\neq \emptyset$ and $z^d$ is an element of the semi-group algebra on the cone of effective quasimap degrees. Since $\qm^{0}\cong \qv$, the expression $\cver{\tau}$ can be thought of as a $z$-deformation of the tautological class $\iota^{*}\tau$ in $K_{\bT}(\qv)$. The $z$-parameters are referred to as K\"ahler parameters and taking the constant term in $z$ recovers $\iota^{*}\tau$.

The series $\cver{\tau}$ was proved in \cite{Sm16} to be a rational function in $z$ for the rank $r$ instanton moduli space, for which the cyclic quiver at $l=1$ is the rank 1 version. An explicit formula for the $l=1$ case was given in \cite{AySm}. Quasimap counts in cyclic quiver varieties are related to curve counts in threefolds, see \cite{liu}. The cohomological version of this relationship goes back to Okounkov-Pandharipande \cite{OPQuanCoh} and Maulik-Oblomkov \cite{ObloMaul}.

Explicitly, the space of descendants is 
$$
\desc=K_{\bT}(\stackqv)=K_{\bT \times G_{\dv}}(pt)=K_{\bT}(pt) \otimes \mathbb{Z}[x_{i,j}^{\pm 1}]_{\substack{0 \leq i \leq l-1 \\ 1 \leq j \leq n}}^{\text{Sym}}
$$
where the superscript $\text{Sym}$ refers to Laurent polynomials that symmetric in the Chern roots $x_{i,1},\ldots, x_{i,n}$ for each $i$ separately. Equivalently, $\desc$ is the ring generated by the Schur functors of the tautological vector bundles $\tb_{i}$, $0 \leq i \leq l-1$, on $\qv$. Let $\tau_{i}:=\sum_{j=0}^{n} \bigwedge^{j} \tb_{i}$.

By equivariant localization, to understand $\cver{\tau_{i}}$ it suffices to study the restrictions $\cver{\tau_{i}}_{\lambda}:= \cver{\tau_{i}}|_{\lambda} \in K_{\bT}(pt)[q^{\pm 1}][[z]]$, where $\lambda \in \qv$ is a $\bT$-fixed point. As we will review in section \ref{NQVsection}, $\qv^{\bT}$ is in natural bijection with partitions $\lambda$ of $nl$ with empty $l$-core. Let $\partitions$ denote the set of all partitions.

Now we consider the varieties $\qv$ for all $n$ and denote $\qv(n):=\text{Hilb}^{n}([\mathbb{C}^2/\Gamma_{l}])$. By \cite{BFwreath}, there is an isomorphism
\begin{equation}\label{Ksymintro}
\bigoplus_{n \geq 0} K_{\bT}(\qv(n))_{loc} \cong \Lambda_{t_1,t_2}^{\otimes l}
\end{equation}
where $\Lambda_{t_1,t_2}=\mathbb{Q}(t_{1},t_{2})[p_1,p_2,\ldots]$ is the ring of symmetric functions\footnote{The usual $q$ and $t$ of Macdonald theory match our parameters by $q=t_{1}, t=t_{2}$.}. The ring $\Lambda_{t_1,t_2}^{\otimes l}$ is generated as an algebra over $\mathbb{Q}(t_1,t_2)$ by the ``colored" power sums: $p^{(i)}_{n}=1 \otimes \ldots \otimes p_{n} \otimes \ldots \otimes 1$ where $p_{n}$ is in the $i$th position. There is another interesting basis of $\Lambda_{t_1,t_2}^{\otimes l}$ given by the wreath Macdonald polynomials of Haiman \cite{Haiman1}, see also the survey of Orr-Shimozono \cite{OrrShim}. We will review the basics of wreath Macdonald theory in section \ref{symsec}. Under the isomorphism \eqref{Ksymintro}, the skyscraper sheaf of a fixed point $\lambda$ is mapped to the wreath Macdonald polynomial $H_{\lambda}$. The two spaces in \eqref{Ksymintro} are equipped with the action of the quantum toroidal algebra, compatible up to certain scalars. We assume in this paper, see section \ref{sec: qta}, that these scalars can be neglected.

Considering the generating function for $\cver{\tau_{0}}_{\lambda}$ for $\lambda \in \parts$ with $\text{core}_{l}(\lambda)=\emptyset$ reveals a beautiful structure. This is the content of our main theorem. For notations that have not yet been defined, see the main text.

\begin{thm}[Theorem \ref{mainthm}]\label{mainthmintro}
Let
$$
N_{\lambda}=\prod_{\substack{\square \in \lambda \\ h_{\lambda}(\square) \equiv 0 \mod l}} (1-t_1^{l(\square)+1}t_2^{-a(\square)})(1-t_1^{-l(\square)}t_2^{a(\square)+1})
$$
and
$$
C^{(i)}_{n}:=\sum_{\substack{j,k=0 \\ -i-j+k \equiv 0 \mod l}}^{l-1} t_{1}^{nj} t_{2}^{nk} 
$$
Then we have
     \begin{multline*}
\sum_{\substack{\lambda \\ \core_{l}(\lambda)=\emptyset}} N_{\lambda}^{-1} H_{\lambda} \cver{\tau_{0}}_{\lambda}  = \\
\exp\left(\sum_{i=0}^{l-1}\sum_{n \geq 1} \frac{C^{(i)}_{n} p^{(i)}_{n}}{n (1-t_{1}^{nl})(1-t_{2}^{nl})} \left(1-\frac{(-\hbar^{1/2} u)^{n} - (\hbar u z_{0} \ldots z_{l-1})^{n}}{(\hbar^{1/2})^{n}-(-z_{0} \ldots z_{l-1})^n} \right)   \right)
\end{multline*}
\end{thm}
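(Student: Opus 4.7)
The plan is to work in the wreath Fock space $\Lambda_{t_1,t_2}^{\otimes l}$ via the isomorphism \eqref{Ksymintro} and factor the capped vertex as $\cver{\tau_0} = \Psi(z)\cdot\ver{\tau_0}$, where $\Psi(z)$ is the capping operator (the fundamental solution of the quantum difference equation) and $\ver{\tau_0}$ is the bare vertex. First I would identify $N_\lambda$ with the tangent-weight contribution at the fixed point $\lambda$, so that $\sum_\lambda N_\lambda^{-1} H_\lambda \otimes (-)_\lambda$ is the Cauchy/reproducing kernel expressing a Fock-space element in the $\{H_\lambda\}$-basis; the left-hand side of the theorem is then literally the Fock-space image of $\cver{\tau_0}$, and the task reduces to computing this element explicitly.

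Next I would compute the bare vertex $\ver{\tau_0}_\lambda$ by equivariant localization on the quasimap moduli $\qm^d$. For cyclic quivers the $\bT$-fixed quasimaps over $\lambda$ are indexed combinatorially by refinements of $\lambda$ according to the degree $d\in\mathbb{Z}_{\geq 0}^l$, and the descendant $\tau_0=\sum_j\bigwedge^j\tb_0$ evaluates as a symmetric function in the Chern roots of $\tb_0|_\lambda$ (with $u$ the formal bookkeeping variable for the exterior-power degree). Summing over $d$ produces an explicit $q$-hypergeometric series in $z_0,\ldots,z_{l-1}$ and $u$, in the spirit of the formulas in \cite{AgOk,dinksmir2}.

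The capping operator $\Psi(z)$ is constructed from the $R$-matrix of $\qta$, via the identification of the Maulik--Okounkov algebra for cyclic quivers with this quantum toroidal algebra. Since the wreath Macdonald polynomials $H_\lambda$ simultaneously diagonalize the commutative horizontal subalgebra, $\Psi(z)$ acts on $H_\lambda$ by an explicit scalar $\Psi_\lambda(z)$, which I would compute as a product over the boxes of $\lambda$ of hook length divisible by $l$. Breaking up this product by the residue of the content modulo $l$ produces the coefficient $C^{(i)}_n$, and a geometric-series summation over these boxes produces the rational factor $1-\tfrac{(-\hbar^{1/2}u)^n-(\hbar u z_0\cdots z_{l-1})^n}{(\hbar^{1/2})^n-(-z_0\cdots z_{l-1})^n}$, with the denominators $n(1-t_1^{nl})(1-t_2^{nl})$ coming from the underlying $l$-quotient partition structure.

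Finally, assembling the pieces, the product $\Psi_\lambda(z)\cdot \ver{\tau_0}_\lambda$ should factor as a $\lambda$-independent vertex-operator term times elementary tangent weights, so that summing over $\lambda$ against the Cauchy-kernel weights yields the evaluation of a vertex operator on the vacuum in $\Lambda_{t_1,t_2}^{\otimes l}$; this is manifestly exponential in the colored power sums $p^{(i)}_n$, as claimed. The hard part will be the explicit diagonalization of the capping operator on $H_\lambda$: unlike the $l=1$ case of \cite{AySm}, wreath Macdonald polynomials lack a compact combinatorial expansion, so I would have to work indirectly through the vertex representation of $\qta$ on $\Lambda_{t_1,t_2}^{\otimes l}$. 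Matching the precise rational function in the exponent and correctly disentangling the interaction between the K\"ahler parameters $z_i$ and the descendant parameter $u$ is where the bulk of the technical work will lie.
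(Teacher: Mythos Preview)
Your identification of the left-hand side as the localization expansion of $\cver{\tau_0}$ in the Fock space is correct, but the core of your strategy rests on a false premise: the capping operator $\Psi(z)$ is \emph{not} diagonal in the fixed-point basis $\{H_\lambda\}$. The claim that ``$\Psi(z)$ acts on $H_\lambda$ by an explicit scalar $\Psi_\lambda(z)$'' would make capped vertices trivial to compute for any descendant, and it fails already for $l=1$: the quantum difference operators for $\hilb^n(\C^2)$ are built from wall $R$-matrices that mix fixed points (they involve Heisenberg raising and lowering operators, not just the Cartan), so their fundamental solution cannot be diagonal. For cyclic quivers the situation is no better. Consequently your third paragraph, where the scalar $\Psi_\lambda(z)$ is supposed to produce the rational function in the exponent, has no content, and the ``assembling'' step in the fourth paragraph has nothing to assemble.

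The paper avoids computing $\Psi$ or the bare vertex for $\qv(n)$ at all. Instead it passes to the quiver variety with doubled framing $\dw=2\delta_0$, where large framing vanishing (Theorem~\ref{lfv}) kills all quantum corrections: $\Psi^{(2)} V^{(\tau_0^{(2)})}=\tau_0^{(2)}$ is purely classical. One then restricts to a torus-fixed component and sends the second framing weight $u_2\to\infty$. In that limit the capping operator factorizes as $J(z)\,\Psi^{(1)}\otimes\Psi^{(1)}$ (Proposition~\ref{capfactor}), and projecting to the component where the second factor is a point collapses $\Psi^{(1)}V^{(\tau_0^{(1)})}$ back into the desired capped vertex $\cver{\tau_0}$ for $\qv(n)$. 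The only operator that actually has to be inverted explicitly is the fusion operator $J(z)$, and the relevant matrix elements of $J(z)$ are governed by the \emph{diagonal} piece $\rmat_0$ of the Khoroshkin--Tolstoy factorization (Proposition~\ref{fusion}), which is a simple Heisenberg exponential acting by substitution on power sums. Applying $J(z)^{-1}$ to the classical side---computed via the wreath Cauchy identity and the Romero--Wen evaluation (Lemma~\ref{classicalformula})---then yields the exponential on the right-hand side. The rational function in $z_0\cdots z_{l-1}$ and $u$ that you were hoping to extract from $\Psi$ instead comes from $J(z)^{-1}$ acting on the classical evaluation, which is why it depends only on the product of the K\"ahler parameters.
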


Theorem \ref{mainthmintro} completely determines $\cver{\tau_{0}}_{\lambda}$ for each $\lambda$, as the single term $\cver{\tau_{0}}_{\lambda}$ can be extracted by taking the wreath inner product of both sides with $H_{\lambda}$. Of course, such an extraction does not lead to any explicit formulas beyond Theorem \ref{mainthmintro}.

Theorem \ref{mainthmintro} closely resembles the Cauchy identity for wreath Macdonald polynomials. This is not a coincidence; rather, we will use the wreath Cauchy identity in the proof. We can equivalently recast the theorem as giving an interpretation to a special evaluation of wreath Macdonald polynomials.

\begin{thm}\label{mainthm2intro}
Let $\lambda \in \partitions$ such that $\core_{l}(\lambda)=\emptyset$. Then
    $$
H_{\lambda}|_{p^{(j)}_{n}=\delta_{0,j}\left(1-\frac{(-\hbar^{1/2} u)^{n} - (\hbar u z_{0} \ldots z_{l-1})^{n}}{(\hbar^{1/2})^{n}-(-z_{0} \ldots z_{l-1})^n} \right) } = \cver{\tau_0}_{\lambda}
    $$
\end{thm}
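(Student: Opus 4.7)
The plan is to derive Theorem \ref{mainthm2intro} from Theorem \ref{mainthmintro} by recognizing the exponential on the right-hand side as a specialization of the wreath Cauchy reproducing kernel for the wreath Macdonald polynomials $H_\lambda$. First I would record the Cauchy identity in a form compatible with the normalization $N_\lambda$ of Theorem \ref{mainthmintro}:
$$
\sum_{\substack{\lambda \\ \core_l(\lambda)=\emptyset}} N_\lambda^{-1} H_\lambda[p]\, H_\lambda[q] \;=\; \exp\left(\sum_{i,j=0}^{l-1}\sum_{n\geq 1} \frac{A^{(n)}_{ij}\, p^{(i)}_n q^{(j)}_n}{n(1-t_1^{nl})(1-t_2^{nl})}\right),
$$
where $A^{(n)}_{ij}$ is the matrix of coefficients governing the wreath power-sum inner product. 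The key numerical identification is $A^{(n)}_{i,0} = C^{(i)}_n$, which aligns with the definition of $C^{(i)}_n$ as a sum over pairs $(j,k)$ with $k-j\equiv i \pmod l$.

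With this identity in hand, I would substitute $q^{(j)}_n \mapsto \delta_{0,j}(1 - F_n)$, where $F_n$ denotes the bracketed factor on the right-hand side of Theorem \ref{mainthmintro}. On the exponential side, this specialization kills all $j \neq 0$ contributions and reduces the kernel to exactly the right-hand side of Theorem \ref{mainthmintro}. On the wreath Macdonald side, it produces $\sum_\lambda N_\lambda^{-1} H_\lambda \cdot H_\lambda|_{\mathrm{spec}}$, where the specialization matches the one in Theorem \ref{mainthm2intro}.

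Combining with Theorem \ref{mainthmintro} then yields the equality
$$
\sum_{\lambda} N_\lambda^{-1} H_\lambda \cver{\tau_0}_\lambda \;=\; \sum_{\lambda} N_\lambda^{-1} H_\lambda \cdot H_\lambda|_{\mathrm{spec}}.
$$
Since the wreath Macdonald polynomials $\{H_\lambda\}_{\core_l(\lambda)=\emptyset}$ form a basis of $\Lambda_{t_1,t_2}^{\otimes l}$ and are in particular linearly independent over $\mathbb{Q}(t_1,t_2)$, we may equate coefficients of $H_\lambda$ to conclude $\cver{\tau_0}_\lambda = H_\lambda|_{\mathrm{spec}}$ for each $\lambda$ with empty $l$-core.

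The main obstacle is establishing the wreath Cauchy identity in the precise form above, which amounts to verifying both $A^{(n)}_{i,0} = C^{(i)}_n$ and that the overall normalization matches $N_\lambda$ from Theorem \ref{mainthmintro}. This is essentially a bookkeeping calculation within wreath Macdonald theory and the combinatorics of the ring $\Lambda_{t_1,t_2}^{\otimes l}$, but requires careful tracking of plethystic conventions, the $\Gamma_l$-equivariant structure on the Chern roots, and the normalization of the wreath inner product.
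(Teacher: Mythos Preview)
Your approach is essentially the one the paper has in mind: Theorem \ref{mainthm2intro} is stated there only as a ``recasting'' of Theorem \ref{mainthmintro}, with no separate proof, the implicit argument being exactly yours --- recognize the exponential on the right of Theorem \ref{mainthmintro} as a specialized wreath Cauchy kernel, then read off the $H_\lambda$ coefficients by linear independence.

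There is, however, one genuine issue with the form of the Cauchy identity you invoke. The wreath Cauchy identity actually established in the paper is
\[
\sum_{\core(\lambda)=\emptyset} N_\lambda^{-1}\, H_\lambda[x]\,\bigl(\inv\,\swap\,H_{\lambda'}\bigr)[y]
\;=\;
\exp\!\left(\sum_{i,n}\frac{p_n^{(i)}[x]}{n(1-t_1^{nl})(1-t_2^{nl})}\sum_{j,k=0}^{l-1} t_1^{nj}t_2^{nk}\,p_n^{(-i-j+k)}[y]\right),
\]
reflecting the orthogonality $\langle H_\lambda,\inv\,\swap\,H_{\lambda'}\rangle_{q,t}^{(l)}=N_\lambda$ of \eqref{orthog}. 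It is \emph{not} of the self-dual shape $\sum N_\lambda^{-1} H_\lambda[p]\,H_\lambda[q]$ that you wrote down. Your identification $A^{(n)}_{i,0}=C^{(i)}_n$ is correct: specializing $p_n^{(m)}[y]=\delta_{0,m}(1-F_n)$ in the kernel above picks out exactly the terms with $-i-j+k\equiv 0$, which is the definition of $C^{(i)}_n$, and the right side becomes precisely the right side of Theorem \ref{mainthmintro}. But on the left side this yields
\[
\cver{\tau_0}_\lambda \;=\; \bigl(\inv\,\swap\,H_{\lambda'}\bigr)\Big|_{p_n^{(j)}=\delta_{0,j}(1-F_n)},
\]
not $H_\lambda|_{\mathrm{spec}}$. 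Passing from this to the form stated in Theorem \ref{mainthm2intro} requires an additional duality identity for wreath Macdonald polynomials (or for $\cver{\tau_0}_\lambda$) that is not part of the ``bookkeeping'' you describe and is not spelled out in the paper either. So while your overall strategy and the verification $A^{(n)}_{i,0}=C^{(i)}_n$ are right, the specific Cauchy identity you need is not the one available, and the missing step is not merely a normalization check.
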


Setting $z_{i}=0$ in Theorem \ref{mainthm2intro}, we recover the evaluation formula for wreath Macdonald polynomials, see Proposition \ref{prop: classicaleval}. When the core is empty, this states
$$
    H_{\lambda}|_{p^{(j)}_{k} =\delta_{0,j} - \delta_{m,j} (-u)^{k}}  = \prod_{\substack{\square=(a,b) \in \lambda \\ a-b \equiv m \mod l}}(1+u t_1^{b-1} t_2^{a-1})
    $$
    This formula, and its generalization to arbitrary core, was recently proved by Romero and Wen in \cite{WenRomero}. 

We are unsure if the evaluation formulas for nonempty core or even the empty core evaluation for $m \neq 0$ admit ``quantum deformations" as in Theorem \ref{mainthm2intro}.

We deduce a few corollaries from Theorem \ref{mainthmintro}. 

\begin{cor}
    Viewed as a rational function of $z$, the poles of $\cver{\extpow^{j}\tb_{0}}$ are contained in the locus $\{(-z_0\ldots z_{l-1} \hbar^{-1/2})^k=1 \, \mid \, 1 \leq k \leq j\}$.
\end{cor}

Specializing $z_0\ldots z_{l-1}=-\hbar^{-1/2}$ yields the following.

\begin{cor}
We have the evaluation
    \[
\cver{\tau_{0}}|_{z_0\ldots z_{l-1}=-\hbar^{-1/2}}=1
    \]
    Equivalently,
    \[
\cver{\extpow^{j}\tb_{0}}|_{z_0\ldots z_{l-1}=-\hbar^{-1/2}}=0 \quad \text{for $j>0$}
    \]
\end{cor}

Consider the following automorphisms on the ring of symmetric functions
\begin{align*}
    \swap&: t_1 \mapsto t_2, \,  t_2 \mapsto t_1 \\
    \pneg&: p^{(i)}_{n} \mapsto p^{(-i)}_{n}
\end{align*}
where the superscripts are taken modulo $l$. It is known that $\swap \, \pneg \, H_{\lambda}=H_{\lambda^{t}}$ when $\core_{l}(\lambda)=\emptyset$, see \cite{OrrShim} Proposition 3.21. One readily checks that $\swap \,  C^{(i)}_{n}=C^{(-i)}_{n}$ and $\swap \, N_{\lambda}=N_{\lambda^{t}}$. So applying $\swap \, \pneg$ to Theorem \ref{mainthmintro} gives the following.

\begin{cor}
    \[
\swap \,
\cver{\tau_{0}}_{\lambda}=\cver{\tau_{0}}_{\lambda^{t}}
    \]
\end{cor}




\subsection{Ingredients in the proof}

Let us now discuss the ingredients in the proof of Theorem \ref{mainthmintro}.

Viewing $\qv(n)$ as a Nakajima quiver variety not only allows one to define quasimap moduli spaces but also opens the door to the representation-theoretic machinery developed in \cite{Ok17} and \cite{OkSm}. In particular, \cite{Ok17} and \cite{OkSm} construct an action of a Hopf algebra on the $K$-theory of quiver varieties and prove that the capping operator is determined by $q$-difference equations involving operators in this Hopf algebra. The important recent work of Zhu \cite{zhu2} identifies this Hopf algebra with the quantum toroidal algebra of type $A$, confirming a longstanding expectation. The cohomological version of this identification was proved earlier by Botta-Davison \cite{BD} and Schiffman-Vasserot \cite{SV23} independently. By Zhu's theorem, we are able to use known results about quantum toroidal algebras and their slope subalgebras to approach capped vertex functions. Studying the $R$-matrix of the slope zero subalgebra in the tensor product of two Fock representations is what leads us to restrict our attention to $\tau_{0}$ rather than a general $\tau_{m}$.

The quiver variety perspective on $\qv(n)$ is again useful, as one can vary the ``framing" vector. Viewed as a Nakajima quiver variety, the dimension vector for $\qv(n)$ is $\dv=(n,n,\ldots,n)$ and the framing vector is $\dw=(1,0,\ldots,0)=:\delta_{0}$. As is well-known, see \cite{Naktens}, one can embed any quiver variety as a $\mathbb{C}^{\times}$-fixed component of another quiver variety with larger framing. The gauge group $G_{\dv}$, and hence the space of descendants $\desc$, is left unchanged. Okounkov proves that for any given $\tau$, there exists a sufficiently large framing vector such that the quantum corrections of $\cver{\tau}$ vanish, see \cite{Ok17} section 7. We refine Okounkov's result here, proving the following.
\begin{thm}[Theorem \ref{lfv}]\label{lfvintro}
    Let $\dv=(n,n,\ldots,n)$ and $\dw=\delta_{0}+\delta_{m}$ and let $\cver{\tau_{m}}$ be the capped vertex function for the quiver variety $\qv(\dv,\dw)$ with descendant $\tau_{m}$. Then the quantum corrections for $\cver{\tau_{m}}$ vanish, i.e.,
    $$
\cver{\tau_{m}}=\tau_{m}
$$
\end{thm}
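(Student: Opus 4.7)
The plan is to sharpen Okounkov's proof of large framing vanishing from \cite[Section 7]{Ok17} by exploiting both the particular shape of $\tau_m=\sum_j \bigwedge^j \tb_m$ and the precise framing $\dw=\delta_0+\delta_m$. By equivariant localization it is enough to verify the identity at each $\bT$-fixed point $\lambda\in \qv(\dv,\dw)^{\bT}$, and by the factorization
$$
\cver{\tau_m}|_\lambda = \Psi(z)\cdot V(z)\cdot \iota^*\tau_m|_\lambda
$$
into the capping operator $\Psi(z)=1+O(z)$ and the bare vertex $V(z)$, together with the triangularity of $\Psi$, the task reduces to showing that the positive-$z$-degree part of $V(z)\cdot \iota^*\tau_m|_\lambda$ vanishes at every fixed point.

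I would first enumerate the $\bT$-fixed points of $\qv(\dv,\dw)$ for $\dw=\delta_0+\delta_m$ as pairs of colored partitions with prescribed total type and write the bare vertex in its standard $q$-hypergeometric presentation, with summands labeled by admissible quasimap degrees $d$. At such a fixed point the descendant factor becomes $\iota^*\tau_m|_\lambda = \prod_\square (1+\chi_\square)$ over the color-$m$ boxes, while the extra framing line at vertex $m$ inserts a matching family of denominator factors into each summand of $V$, so that numerator and denominator weights are tuned to pair up box by box.

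The key step is a term-by-term Koszul-type cancellation: for every $d>0$ the product $\iota^*\tau_m|_\lambda \cdot V(z)_d$ should vanish after evaluating on the tangent weights coming from the polarization. Okounkov's original bound requires $\dw_m>\deg_{\tb_m}\tau_m=n$, which fails here; the sharpening rests on the fact that the total exterior algebra $\sum_j \bigwedge^j$ distributes uniformly across the $n$ Chern roots, so a single framing line at vertex $m$ suffices to absorb all of these contributions simultaneously rather than one at a time.

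The main obstacle is carrying out this cancellation uniformly in the quasimap degree $d$ and across all fixed-point types, since the denominators of $V$ depend on both. One natural route is to identify the positive-degree summation with a specialization of the wreath Cauchy identity used in the proof of Theorem \ref{mainthmintro} and check that the specialization collapses; an alternative is to pair degree-$d$ summands through a Serre-duality-type symmetry of $\qmtan$ on $\qm^d$. Either way, the crux of the argument reduces to a closed combinatorial identity on tangent weights at $\bT$-fixed points, which is where the bulk of the technical work lies.
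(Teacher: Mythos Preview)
Your proposal has a genuine gap at the very first reduction step. The factorization you write,
\[
\cver{\tau_m}|_\lambda = \Psi(z)\cdot V(z)\cdot \iota^*\tau_m|_\lambda,
\]
is not correct: the descendant is inserted via $\ev_\infty^*(\tau_m)$ on each quasimap moduli space, not by multiplying the trivial-descendant vertex by the classical value $\iota^*\tau_m|_\lambda$. The correct statement is $\cver{\tau_m}=\Psi(z)\,V^{(\tau_m)}(z)$, and the descendant contribution to the $z^d$ term depends on the bundles $\mathscr{V}_i$ associated to a degree-$d$ quasimap, not just on $\tb_m|_\lambda$. More seriously, even with the correct factorization, your reduction to ``the positive-$z$-degree part of the bare vertex vanishes'' is false: the bare vertex $V^{(\tau_m)}$ is genuinely an infinite series in $z$ with nonzero coefficients in every degree, and the triangularity $\Psi=1+O(z)$ cannot by itself collapse it. So no Koszul-type cancellation of the kind you describe can occur term by term in $z$.

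What is missing is the role of the variable $q$. Okounkov's argument in \cite[Section 7.5]{Ok17}, which the paper refines, does not show that the bare vertex is classical; it shows that the $q\to\infty$ limit of each nonconstant contribution to $V^{(\tau_m)}$ vanishes. Since the capped vertex lies in non-localized $K$-theory and is a Laurent polynomial in $q$, this limit controls it completely. Concretely, the contribution of a nonconstant $\mathbb{C}^\times_q$-fixed quasimap $f$ behaves like $\tau_m(\mathscr{V})|_\infty\, q^{-\deg \qmpol_p}$, and since $\deg_q \tau_m(\mathscr{V})|_\infty \le \deg \mathscr{V}_m$, the task is the purely combinatorial inequality
\[
\deg \mathscr{V}_{m,f} - \deg \qmpol_{p,f} < 0
\]
for every such $f$. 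The paper proves this by induction on the number of $l$-strips in the fixed-point partitions: removing an $l$-strip from $\lambda_0$ or $\lambda_m$ produces a quasimap $f'$ to a smaller quiver variety, and one compares $\deg \mathscr{V}_{m}$ and $\deg \qmpol_{p}$ before and after the removal (Lemma~\ref{compareV} and Corollary~\ref{comparepol}), with a direct base-case check when a single $l$-strip (a hook) remains. None of this combinatorics appears in your outline, and the suggested alternatives (wreath Cauchy, Serre duality on $\qmtan$) do not address the relevant degree estimate; in particular, the Cauchy identity enters the paper only \emph{after} Theorem~\ref{lfv} is established, as an input to Theorem~\ref{mainthm}.
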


Given that our large framing vanishing result (Theorem \ref{lfv}) holds for $\tau_{m}$ for any $m$ and that Proposition \ref{prop: classicaleval} likewise holds for any $m$, it is in our view not unreasonable to hope for a nice formula for $\cver{\tau_{m}}$ for $m \neq 0$. Nevertheless, as we have said above, we do not know how to generalize the $R$-matrix formulas to apply to these cases.

In the remainder of the introduction, we will discuss a few more applications and conjectures arising from Theorem \ref{mainthmintro}.

\subsection{Calabi-Yau specialization}

As we will review in Theorem \ref{VPsiV}, one can write the capped vertex as a product of the capping operator and the bare vertex:
\begin{equation}\label{VpsiVintro}
\cver{\tau}= \Psi \cdot \ver{\tau}
\end{equation}
Just as for $\cver{\tau}$, the two terms on the right side are $K$-theory valued power series in the $z$ variables. One can consider the so-called \emph{Calabi-Yau limit}, which, in our notation, corresponds to setting $q=\hbar$. For an arbitrary descendant $\tau$, $\cver{\tau}$ is Laurent polynomial in $q$ and this specialization loses information. Remarkably, the formula in Theorem \ref{mainthmintro} is manifestly independent of $q$. So one can specialize each term of \eqref{VpsiVintro} to obtain a much simpler equation for the capped vertex:
\[
\cver{\tau_{0}}=\Psi|_{q=\hbar}  \cdot \ver{\tau_{0}}|_{q=\hbar}
\]
The specialization $\ver{\tau_{0}}|_{q=\hbar}$ is quite extreme and makes contact with the representation theory of quantized Coulomb branches by the ``quantum Hikita conjecture" of \cite{KMP}. The descendant vertex $\ver{\tau_{0}}_{\lambda}|_{q=\hbar}$ is expected to coincide with the graded trace of the element $\tau_{0}$, viewed as an element of the quantized Coulomb branch, over the Verma module corresponding to $\lambda$, see \cite{DKK} where this is proved for ADE quiver varieties with minuscule framings. Furthermore, $\ver{\tau_{0}}_{\lambda}|_{q=\hbar}$ is a rational function of $z$ in contrast to $\ver{\tau_{0}}_{\lambda}$ which is $q$-hypergeometric.

The $q=\hbar$ specialization of $\Psi$ must similarly be drastic. Computer experiments show that it actually truncates to a polynomial in $z$. We conjecture that this holds in general.

\begin{conj}\label{Psipoly}
Let $X$ be a quiver variety and let $\Psi$ be the capping operator of $X$. The specialization
    \[
\Psi|_{q=\hbar}
    \]
    is a polynomial in $z$.
\end{conj}

As a consequence of Conjecture \ref{Psipoly}, we have the following.

\begin{prop}
    Assume Conjecture \ref{Psipoly} holds. Let $\tau$ be such that $\cver{\tau}$ is independent of $q$. Then the poles of $\cver{\tau}$, viewed as a rational function of $z$, are contained in the locus of poles of $\ver{\tau}|_{q=\hbar}$.
\end{prop}

We emphasize that computing $\ver{\tau}|_{q=\hbar}$ as a rational function of $z$--and thus computing its poles--is relatively straightforward. In fact, it reduces to the computation for zero-dimensional quiver varieties which were studied in depth in type $A$ in \cite{dinksmir3, dinksmir2} and in type $D$ in \cite{dinkjang}. When $r=1$ and $\tau=1$, the poles of $\ver{\tau}_{\lambda}|_{q=\hbar}$ can be seen to lie at $k$th roots of unity where $k$ runs over the set of hook lengths of $\lambda$, in agreement with Conjecture 3 of \cite{PPRationality}. As in \cite{dinksmir3}, other $\tau$ can be studied by applying some Macdonald difference operators.

\subsection{Global integrality}

Theorem \ref{mainthmintro} also suggests a new integrality property of capped vertex functions. Let us consider the example of $\hilb^{2}([\mathbb{C}^{2}/\Gamma_{2}])$. As a Nakajima variety, it is constructed using the dimension data $\dv=(2,2)$ and $\dw=(1,0)$ and the stability parameter $\theta=(1,1)$. There are five torus fixed points, labeled by the partitions of $4$. Theorem \ref{mainthmintro} gives the following formulas:
\begin{align*}
    &\cver{\tau_{0}}_{(4)}=1+\frac{\sqrt{\hbar} (\sqrt{\hbar} z_0 z_1 +1) (1+t_{1}^{-2}) }{(z_0 z_1+\sqrt{\hbar})} u +\frac{\hbar (\sqrt{\hbar} z_0 z_1 +1) (-\hbar z_0 z_1 +z_0 z_1 +\sqrt{\hbar} ( z_0^2 z_1^2-1) t_{1}^{-2})}{(z_0 z_1+\sqrt{\hbar})(z_0^2 z_1^2-\hbar)} u^2 \\
   & \cver{\tau_{0}}_{(3,1)}= \cver{\tau_{0}}_{(4)}\\
   & \cver{\tau_{0}}_{(2,2)}=1+\frac{\sqrt{\hbar} (\sqrt{\hbar} z_0 z_1 +1) (1+\hbar^{-1}) }{(z_0 z_1+\sqrt{\hbar})} u +\frac{\hbar (\sqrt{\hbar} z_0 z_1 +1) (-\hbar z_0 z_1 +z_0 z_1 +\sqrt{\hbar} ( z_0^2 z_1^2-1) \hbar^{-1})}{(z_0 z_1+\sqrt{\hbar})(z_0^2 z_1^2-\hbar)} u^2  \\
   & \cver{\tau_{0}}_{(2,1,1)}= 1+\frac{\sqrt{\hbar} (\sqrt{\hbar} z_0 z_1 +1) (1+t_{2}^{-2}) }{(z_0 z_1+\sqrt{\hbar})} u +\frac{\hbar (\sqrt{\hbar} z_0 z_1 +1) (-\hbar z_0 z_1 +z_0 z_1 +\sqrt{\hbar} ( z_0^2 z_1^2-1) t_{2}^{-2})}{(z_0 z_1+\sqrt{\hbar})(z_0^2 z_1^2-\hbar)} u^2\\
  &  \cver{\tau_{0}}_{(1,1,1,1)}= \cver{\tau_{0}}_{(2,1,1)}
\end{align*}
By the localization theorem, this uniquely determines $\cver{\tau_{0}}$. A moment's inspection reveals that 
\begin{multline}\label{globalcver}
\cver{\tau_{0}}=1+\frac{\sqrt{\hbar} (\sqrt{\hbar} z_0 z_1 +1) \tb_{0} }{(z_0 z_1 +\sqrt{\hbar})} u \\ +\frac{\hbar (\sqrt{\hbar} z_0 z_1  +1) (-\hbar z_0 z_1  +z_0 z_1  +\sqrt{\hbar} ( z_0^2 z_1^2-1) \extpow^{2}\tb_{0} )}{(z_0 z_1 +\sqrt{\hbar})(z_0^2 z_1 ^2-\hbar)} u^2
\end{multline}
In other words, not only is $\cver{\tau_{0}}$ a rational function of $z$, it is an element of \emph{non-localized} $K$-theory over the field $\mathbb{Q}(t_1^{1/2},t_2^{1/2},z)$. A-priori, it is only the coefficients of $z$ in the series defining $\cver{\tau}$ which lie in integral $K$-theory, and this does not guarantee the global property. We refer to this property as the ``global integrality" of the capped vertex. We conjecture that it holds in general.

\begin{conj}\label{conj: global integrality}
    The capped vertex function $\cver{\tau}$ for any quiver variety $X$ and any descendant $\tau$ is globally integral. More precisely, for any descendant $\tau$, there exists a polynomial $d_{X,\tau}(z) \in K_{\bT}(pt)[z]$ such that $d_{X,\tau}(z) \cver{\tau} \in K_{\bT}(X)[z]$.
\end{conj}

One more interesting observation from \eqref{globalcver} is that $\cver{\extpow^{k}\tb{0}}$, as a $K$-theory class, lies in the span of $1$ and $\extpow^{k}\tb{_0}$. We do not know if this holds for arbitrary descendants.

\subsection{Wall-crossing}

It is expected in enumerative geometry that invariants of spaces related by a change in stability parameters should be related by some ``wall-crossing formulas". This has been studied in various settings, see for example \cite{BCY}, \cite{CR}, \cite{joyce}, \cite{moreira}, \cite{PSW}. In the setting of this paper--$K$-theoretic invariants of quasimaps to GIT quotients--we do not know of a precise conjecture. It is generally expected, see for example \cite{BCR}, that when a generating series like $\cver{\tau}$ is a rational function of $z$, it should be invariant under changes of stability, up to a monomial transformation of the $z$-variables. Different choices of stability should correspond to power series expansions around different points.

The formula of Theorem \ref{mainthmintro} provides some insight into the sort of wall-crossing that should hold here. To explain why, let us again consider the example of $\hilb^{2}([\mathbb{C}^{2}/\Gamma_{2}])$.

Specializing $z=0$, in \eqref{globalcver} one obtains $\tau_0$. Equivalently, specializing $\cver{\tau_{0}}|_{\lambda}$, one obtains $\tau_{0}|_{\lambda}$. One way to probe what other stability conditions $\cver{\tau_{0}}$ ``knows about" is by taking other limits in $z$. For example,
\[
\lim_{z_1,z_2 \to \infty} \cver{\tau_{0}}_{(4)}=1+(t_1 t_2 + t_1^{-1} t_2) u +t_2^2 u^2
\]
which is easily computed to be the character of $\tb_{0}$ at the appropriate fixed point on the quiver variety with $\dv=(2,2)$, $\dw=(1,0)$, and the \emph{opposite stability condition} $\theta=(-1,-1)$.

If instead one uses the stability condition $\theta'=(-1+\epsilon,1)$ for $0<\epsilon\ll 1$, one obtains $\hilb^{2}(\mathcal{A}_{1})$, the Hilbert scheme of points on the resolution $\mathcal{A}_{1}$ of the type $A_1$ surface singularity. The fixed points are in bijection with partitions of $4$, and the characters of the exterior powers of the $0$th tautological bundle on this space are
\begin{align*}
  &  \tau_{0}'|_{(4)}=1+(1+t_1^{-2})u+t_1^{-2} u^2 \\
   & \tau_{0}'|_{(3,1)}=1+(1+t_1 t_2^{-1})u+t_1 t_2^{-1} u^2 \\
   & \tau_{0}'|_{(2,2)}=1+2 u+u^2 \\
  & \tau_{0}'|_{(2,1,1)}=1+(1+t_1^{-1} t_2)u+t_1^{-1} t_2 u^2  \\
  & \tau_{0}'|_{(1,1,1,1)}=1+(1+t_2^{-2})u+t_2^{-2} u^2
\end{align*}
These can be computed by writing down explicit $\bT$-fixed $\theta'$-stable quiver representations. A more powerful method is to apply some composition of reflection isomorphisms $R_{i}$ of \cite{Nakrefl}, to $\hilb^{2}(\mathcal{A}_{1})$ until reaching a quiver variety with positive stability condition, for which the characters of tautological bundles at fixed points are known. Combinatorial versions $R_{i}^{*}$ of reflection isomorphisms, which are operators on $\bT$-characters, are known to control the change in tautological bundle characters under reflection isomorphisms, see Theorem 4.3 of \cite{OrrShim}. For the quiver variety $\hilb^{2}(\mathcal{A}_{1})$, the application of a single reflection isomorphism at vertex 0 gives the quiver variety with $\dv=(3,2)$, $\dw=(1,0)$, and $\theta=(1,1)$. Either way, we obtain the formulas above.

If capped vertex functions for each fixed point were invariant under change of stability parameter, then there must be some (possibly infinite) solution for $(z_1,z_2)$ in the equation $\cver{\tau_{0}}_{(3,1)}=\tau_{0}'|_{(3,1)}$. It is straightforward to check that this equation has no solution. So capped vertex functions cannot be invariant under change in stability in this way.

For the quiver variety $\hilb^{2}(\mathcal{A}_{1})$, we have calculated $\cver{\tau}$ directly in the fixed point basis using a computer and formula \eqref{VpsiVintro}. In this case, torus fixed points on the moduli space giving rise to the bare vertex function are no longer isolated; so it is not clear how to generalize our proof of Theorem \ref{lfvintro} to this setting. Nevertheless, we record the computation:
\begin{align*}
  & \cver{\tau_{0}}_{(4)}=1+\frac{\sqrt{\hbar}(\sqrt{\hbar}z_1+1)(1+t_1^{-2})}{(z_1+\sqrt{\hbar})}u+\frac{\hbar(\sqrt{\hbar} z_1+1)(-\hbar z_1 +z_1 + \sqrt{\hbar}(z_1^2-1) t_1^{-2})}{(z_1+\sqrt{\hbar})(z_1^2-\hbar)}u^2 \\
    &  \cver{\tau_{0}}_{(3,1)}=1+\frac{\sqrt{\hbar}(\sqrt{\hbar}z_1+1)(1+t_1^{-1} t_2)}{(z_1+\sqrt{\hbar})}u+\frac{\hbar(\sqrt{\hbar} z_1+1)(-\hbar z_1 +z_1 + \sqrt{\hbar}(z_1^2-1) t_1^{-1} t_2)}{(z_1+\sqrt{\hbar})(z_1^2-\hbar)}u^2 \\
     &    \cver{\tau_{0}}_{(2,2)}=1+2 \frac{\sqrt{\hbar}(\sqrt{\hbar}z_1+1)}{(z_1+\sqrt{\hbar})}u+\frac{\hbar(\sqrt{\hbar} z_1+1)(-\hbar z_1 +z_1 + \sqrt{\hbar}(z_1^2-1))}{(z_1+\sqrt{\hbar})(z_1^2-\hbar)}u^2 \\
      &      \cver{\tau_{0}}_{(2,1,1)}=1+\frac{\sqrt{\hbar}(\sqrt{\hbar}z_1+1)(1+t_1 t_2^{-1})}{(z_1+\sqrt{\hbar})}u+\frac{\hbar(\sqrt{\hbar} z_1+1)(-\hbar z_1 +z_1 + \sqrt{\hbar}(z_1^2-1)t_1 t_2^{-1})}{(z_1+\sqrt{\hbar})(z_1^2-\hbar)}u^2 \\
   & \cver{\tau_{0}}_{(1,1,1,1)}=1+\frac{\sqrt{\hbar}(\sqrt{\hbar}z_1+1)(1+t_2^{-2})}{(z_1+\sqrt{\hbar})}u+\frac{\hbar(\sqrt{\hbar} z_1+1)(-\hbar z_1 +z_1 + \sqrt{\hbar}(z_1^2-1) t_2^{-2})}{(z_1+\sqrt{\hbar})(z_1^2-\hbar)}u^2
\end{align*}
As in \eqref{globalcver}, we write this as an integral $K$-theory class:
\[
\cver{\tau_{0}}=1+\frac{\sqrt{\hbar} (\sqrt{\hbar} z_1 +1) \tb_{0} }{(z_1+\sqrt{\hbar})} u +\frac{\hbar (\sqrt{\hbar} z_1 +1) (-\hbar z_1 +z_1 +\sqrt{\hbar} ( z_1^2-1) \extpow^{2}\tb_{0} )}{(z_1+\sqrt{\hbar})(z_1^2-\hbar)} u^2
\]
After the change of variables\footnote{Although we only need the change of $z_1$, this reflects the change of coordinates moving the \emph{ample cone} for $(-1+\epsilon,1)$ to the ample cone for $(1,1)$.} $z_0\mapsto z_0^{-1}, z_1\mapsto z_0 z_1$ this coincides with the class \eqref{globalcver}.

\begin{conj}
   Assume Conjecture \ref{conj: global integrality}. Let $X$ and $X'$ be quiver varieties constructed using the same $\dv$ and $\dw$ but different generic stability parameters. Let $\tb_i$ and $\tb_i'$ be the tautological bundles on $X$ and $X'$ respectively. Let $\widetilde{\Phi}$ be the map $\tb_i \mapsto \tb_{i}'$ and let $\Phi:K_{\bT}(X) \to K_{\bT}(X')$ be the induced map. Let $\tau$ be a descendant for $X$ and let $\tau'=\widetilde{\Phi}(\tau)$. Then the capped vertex functions for $X$ and $X'$ are related by
   \[
   \Phi(\hat{V}^{X,(\tau)})=\hat{V}^{X',(\tau')}
   \]
   where the K\"ahler variables are identified as $z_i'=\prod_{j} z_j^{a_{i,j}}$ for some $a_{i,j} \in \mathbb{Z}$.
\end{conj}

\subsection{Reflection isomorphisms}

In addition, our formulas give a hint about another possible wall-crossing result unique to the setting of Nakajima quiver varieties. Although we were not able to find a general formula like Theorem \ref{mainthmintro}, we are able to compute small examples of capped vertex functions for other type $A$ quiver varieties corresponding to nonempty $l$-cores using \eqref{VpsiVintro}. For the case of $\dv=(3,2)$, $\dw=(1,0)$, and $\theta=(1,1)$, the fixed points are indexed by partitions $\lambda$ of $5$ such that $\core_{2}(\lambda)=(1)$. We have computed the restriction to the fixed point $\lambda=(3,2)$ of the capped vertex with descendants $\extpow^1 \tb_{0}$ and $\extpow^{1} \tb_{1}$ respectively to be 
\begin{align*}
   &F_{0}(z_1,z_2)= \frac{(1-z_1z_2)(t_1+t_2)(t_1^2 t_2 z_1-t_1 z_1 - t_2 z_1 +t_2)}{(z_1-\hbar)(z_1 z_2-\hbar)} +\frac{\hbar (z_1-1)}{z_1-\hbar}\\
  & F_{1}(z_1,z_2)=\frac{(t_1+t_2)(z_1 z_2-1)}{z_1 z_2 -\hbar}
\end{align*}

The second summand in $F_0$ is exactly the capped vertex with descendant $\extpow^{1} \tb_{0}$ for the quiver variety with $\dv=(1,0)$, $\dw=(1,0)$, and $\theta=(1,1)$, which should be viewed as corresponding to the nontrivial $2$-core of the partition $(3,2)$. Remarkably, the system of equations
\begin{align*}
   & F_0(z_1,z_2)=1+t_1 t_2^{-1} \\
    & F_1(z_1,z_2)=t_1^{-1}+t_2^{-1}
\end{align*}
has a unique solution given by $(z_1,z_2)=(1,0)$. The $\bT$-characters on the right hand side are the restrictions of $\tb_{0}$ and $\tb_{1}$ on $\hilb^{2}(\mathcal{A}_{1})$ to the fixed point given by $(3,1)$. In fact, we have checked that this likewise works for any fixed point. This suggests that Nakajima's reflection isomorphisms may be related to quasimap counts in quiver varieties, providing another type of ``wall-crossing" besides variation of GIT parameter. We record this hope in the following question.

\begin{ques}
    Can the combinatorial reflection operators $R_{i}^{*}$ of Theorem 4.3 of \cite{OrrShim} admit ``quantum deformations" $R_{i}^{*}(z)$ which control how capped vertex functions change under reflection isomorphisms? Will the supposed operators $R_{i}^{*}(z)$ satisfy some dynamical relations?
\end{ques}

\subsection{Acknowledgements}

We would like to thank Davesh Maulik, Andrey Smirnov, Joshua Wen, and Tianqing Zhu for helpful discussions regarding this work. We are also grateful to an anonymous referee for suggestions improving this paper. J. Ayers was supported by the UNC Harold J. Glass Summer Research Fellowship. H. Dinkins was supported by NSF grant DMS-2303286 at MIT and the NSF RTG grant Algebraic Geometry and Representation Theory at Northeastern University DMS–1645877.

\section{Partitions and symmetric functions}\label{symsec}

We begin with a discussion of some combinatorial tools--partitions, symmetric functions, and wreath Macdonald polynomials--that will be used throughout this paper.

\subsection{Partitions and normalized Macdonald polynomials}\label{partitions}
A length $n$ partition of $k$ is a tuple $\lambda=(\lambda_1\geq\ldots\geq\lambda_n)$ with $|\lambda|:=\sum_i \lambda_i=k$. We conflate a partition $\lambda$ with its Young diagram, which is the set 
$$
\{(a,b) \in \mathbb{Z}^{2} \, \mid \, 1 \leq a \leq n, 1 \leq b \leq \lambda_{a}\}
$$
We call points in the Young diagram ``boxes" in $\lambda$ and we will sometimes denote a box of $\lambda$ by $(a,b)=\square \in \lambda$. Let $\parts$ denote the set of partitions.

The arm (resp. leg) of a box $\square$, written $a(\square)$ (resp. $l(\square)$) is the number of boxes of $\lambda$ strictly right (resp. above) of $\square$. The hook length of a box $\square \in \lambda$ is defined to be $h(\square):=a(\square)+l(\square)+1$. The content of a box $\square=(a,b)$ is $c(\square)=a-b$. The transpose of a partition $\lambda$ will be denoted by $\lambda'$.

\begin{ex} By our conventions the following Young Diagram represents the partition $(3,3,2,1)$
\begin{center}
    {\begin{tikzpicture}[scale=1]
\draw (0,0)--(4,0);;
\draw (0,1)--(4,1);;
\draw (0,2)--(3,2);;
\draw (0,3)--(2,3);;
\draw (0,0)--(0,3);;
\draw (1,0)--(1,3);;
\draw (2,0)--(2,3);;
\draw (3,0)--(3,2);;
\draw (4,0)--(4,1);;
\draw[dashed,->](0.5,1.5) --  node[above] {arm}  (2.5,1.5);
\draw[dashed,->](0.5,1.5) --  (0.5,2.5);
\node[] at (0.25,2.2){leg};
\end{tikzpicture}}
\end{center}
If $\square=(1,2)$, then $a(\square)=2$, $l(\square)=1$, $h(\square)=4$, and $c(\square)=-1$.
\end{ex}

\subsection{Symmetric functions}

Let 
$$
\Lambda_{t_1,t_2}:=\mathbb{Q}(t_1,t_2)[p_1,p_2,\ldots]
$$
be the ring of symmetric functions in infinitely many variables $x_1,x_2,\ldots$ over $\mathbb{Q}(t_1,t_2)$. The elements $p_{n}$ are the power sums, i.e. $p_{n}=\sum_{i \geq 1} x_{i}^{n}$. When there are several sets of variables around, we will instead write $p_{n}[x]$. The Schur functions $s_{\lambda}$ for $\lambda \in \parts$ form a basis for $\Lambda_{t_{1},t_{2}}$ over $\mathbb{Q}(t_{1},t_{2})$.

\subsection{Maya diagrams, cores, and quotients}
\begin{defn}
    A Maya diagram is a map $m: \mathbb{Z}\rightarrow \{0,1\}$ such that $m(i)=-1$ for $i>\!\!>0$ and $m(i)=1$ for $i<\!\!<0$.
\end{defn}
Pictorially we can view such a diagram as a chain of black and white beads along a line, where white beads correspond to integers $i$ such that $m(i)=-1$ and black beads to integers such that $m(i)=1$.

For $c\in \mathbb{Z}$ the vacuum diagram $v_c$ is defined as the Maya diagram where $v_c(i)=-1$ for $i\geq c$ and $v_c(i)=1$ for $i< c$. The charge of a Maya diagram  is defined as $$c(m):=|\{k\in \mathbb{Z}: k<0, m(k)=-1\}|-|\{k\in \mathbb{Z}: k\geq 0, m(k)=1\}|$$
Given any Maya diagram $m$, the Maya diagram $\widetilde{m}$ defined by
$$
\widetilde{m}(k)=m(k-c(m))
$$
always has charge $0$.

There is a well-known bijection between partitions and charge zero Maya diagrams, see \cite{James}. Given a partition $\lambda$, the corresponding Maya diagram is
$$
m_{\lambda}(i)=
\begin{cases}
  1 & i < 0 \text{ and } c^{\lambda}_{i}-c^{\lambda}_{i+1}=0 \\
  -1 & i < 0 \text{ and }  c^{\lambda}_{i} - c^{\lambda}_{i+1}=-1 \\
   1  & i \geq 0 \text{ and }  c^{\lambda}_{i}-c^{\lambda}_{i+1}=1 \\
   -1 & i\geq 0 \text{ and } 
 c^{\lambda}_{i} -c^{\lambda}_{i+1}=0
\end{cases}
$$
where $c^{\lambda}_{j}$ is the number of boxes of $\lambda$ with content $j$. See Figure \ref{mayaex} for an example.

For the rest of this subsection, we fix an integer $l \geq 0$. An $l$-strip in a partition $\lambda$ is a size $l$ connected subset of the outermost boxes of $\lambda$ whose removal gives another Young diagram. The $l$-core of $\lambda$, written $\core_{l}(\lambda)$ (or just $\core(\lambda)$ if the choice of $l$ is clear) is the partition obtained by successively removing all possible $l$-strips from $\lambda$. It does not depend on the order in which $l$-strips are removed. A partition is called an $l$-core partition if it does not have any $l$-strips.

\begin{prop}[\cite{Gordon} section 7.5, see also \cite{OrrShim} section 3.1.6]\label{coreroot}
    Let $\alpha_{i}$ for $1 \leq i \leq l-1$ be the simple roots of the $A_{l-1}$ root system. Let $\alpha_{0}=-\sum_{i=1}^{l-1} \alpha_{i}$. The assignment
    $$
\lambda \mapsto \sum_{(a,b) \in \lambda} \alpha_{a-b}
    $$
    defines a bijection between $l$-core partitions and the type $A_{l-1}$ root lattice. The subscripts in $\alpha_{a-b}$ are taken modulo $l$.    
\end{prop}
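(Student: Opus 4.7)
The plan is to establish the bijection via the $l$-abacus decomposition of Maya diagrams, which was just set up in the excerpt. The central idea is: given the Maya diagram $m_\lambda$ (of charge $0$) associated to $\lambda$, for each residue $i \in \{0, 1, \ldots, l-1\}$ form the sub-Maya diagram $m_\lambda^{(i)}(j) := m_\lambda(lj+i)$. Each $m_\lambda^{(i)}$ has a well-defined charge $c_i(\lambda) \in \mathbb{Z}$, and since $m_\lambda$ has charge zero one has $c_0(\lambda) + c_1(\lambda) + \cdots + c_{l-1}(\lambda) = 0$.

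The first key step, which is standard (\cite{Gordon}, loc.\ cit.), is that removing an $l$-strip from $\lambda$ corresponds to swapping one black bead with an adjacent white bead in a single sub-Maya diagram $m_\lambda^{(i)}$, leaving the other sub-diagrams unchanged. Consequently, $\lambda$ is an $l$-core if and only if each $m_\lambda^{(i)}$ is already a vacuum diagram $v_{c_i(\lambda)}$. Since a vacuum diagram is determined by its charge, this produces a bijection
\[
\{\text{$l$-core partitions}\} \;\longleftrightarrow\; \bigl\{(c_0,\ldots,c_{l-1}) \in \mathbb{Z}^l : c_0 + \cdots + c_{l-1} = 0\bigr\}.
\]
The sum-zero sublattice of $\mathbb{Z}^l$ is free abelian of rank $l-1$ and is canonically isomorphic to the $A_{l-1}$ root lattice $\bigoplus_{i=1}^{l-1} \mathbb{Z}\alpha_i$.

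The remaining step is to check that, under this identification, the proposition's assignment $\lambda \mapsto \sum_{(a,b) \in \lambda} \alpha_{a-b}$ is carried to the charge-tuple map. Writing $n_i(\lambda)$ for the number of boxes of $\lambda$ of content $\equiv i \pmod l$, this assignment can be rewritten using $\alpha_0 = -\sum_{i \geq 1}\alpha_i$ as $\sum_{i=1}^{l-1}(n_i(\lambda) - n_0(\lambda)) \alpha_i$. I would prove, by induction on $|\lambda|$, a linear identity of the form $n_i(\lambda) - n_0(\lambda) = L_i(c_0(\lambda),\ldots,c_{l-1}(\lambda))$ for an explicit invertible linear form $L_i$ on the sum-zero sublattice: the inductive step is that adding a single removable box of content $c$ flips a unique pair of adjacent beads of $m_\lambda$, which increments one $c_j$ and decrements the adjacent $c_{j \pm 1}$, while simultaneously incrementing $n_c$. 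Matching the residue $c \bmod l$ with the affected indices pins down $L$, and since $L$ is invertible the composition remains a bijection onto the root lattice.

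The main obstacle is bookkeeping rather than conceptual: one must carefully fix the offset between bead positions modulo $l$ and box contents modulo $l$, as well as the sign conventions in the identification of $\{(c_i) : \sum c_i = 0\}$ with the $A_{l-1}$ root lattice, so that the explicit map $\lambda \mapsto \sum \alpha_{a-b}$ agrees on the nose with the charge-tuple bijection rather than only up to an overall sign, a cyclic permutation of simple roots, or some fixed lattice automorphism. Working out small examples (e.g.\ $l=2$ with the staircase $2$-cores $\emptyset, (1), (2,1), (3,2,1), \ldots$) would be an essential sanity check when pinning down conventions.
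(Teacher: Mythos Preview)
The paper does not actually prove this proposition: it is stated with a citation to \cite{Gordon} section 7.5 and \cite{OrrShim} section 3.1.6, and no argument is given in the text. So there is nothing to compare your proposal against line by line.

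That said, your outline is the standard argument and is essentially correct. One small caution: in your inductive step you want the identity $n_i(\lambda)-n_0(\lambda)=L_i(c_0(\lambda),\ldots,c_{l-1}(\lambda))$ to hold for \emph{all} partitions, not just $l$-cores, since adding a single box to an $l$-core need not yield an $l$-core. This is fine because the charge vector is defined for every partition, but you should state it that way. Also, be aware that ``invertibility of $L$'' is really the statement that $L$ is a lattice isomorphism from $\{(c_i):\sum c_i=0\}$ onto the root lattice; merely being injective over $\mathbb{Q}$ is not enough to conclude surjectivity onto $\bigoplus_i \mathbb{Z}\alpha_i$. In practice the explicit formula (in suitable conventions, $c_i - c_{i+1}$ picks out a simple-root coordinate) makes this clear, but it is worth saying explicitly rather than leaving it as ``bookkeeping.''
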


The $l$-quotient of $\lambda$, written $\quot_{l}(\lambda)$ is an $l$-tuple of partitions encoding the way in which $l$-strips are layered onto the $l$-core of $\lambda$. More precisely, given $\lambda$, let $m^{(i)}$ be the Maya diagram such that
$$
m^{(i)}(k)=m_{\lambda}(i+kl)
$$
Let $\lambda^{(i)}$ be the partition whose Maya diagram is $\widetilde{m^{(i)}}$ (the charge 0 shift of $m^{(i)}$). Then 
$$
\quot_{l}(\lambda)=(\lambda^{(0)},\lambda^{(1)},\ldots,\lambda^{(l-1)})
$$
It is known that the map
\begin{align}
    \nonumber \parts &\to \parts \times \parts^{l} \\ 
    \lambda &\mapsto (\core_{l}(\lambda),\quot_{l}(\lambda)) \label{quotcore}
\end{align}
is a bijection \cite{James}.

\begin{ex}
    Consider $\lambda=(3,2,2,1,1,1)$. Then $\core_{3}(\lambda)=(3,1)$ and $\quot_{3}(\lambda)=(\emptyset,\emptyset,(1,1))$. 

    \begin{figure}[!ht]
    \centering
\begin{tikzpicture}[circ/.style={shape=circle,draw,inner sep=1.5pt}]
    \node[on grid] (0) at (-1/2,0){};
   
\draw[-] (0,0) edge (-3,3) (1,1) edge (-2,4) (2,2) edge (0,4) (3,3) edge (1,5) (4,4) edge (3,5) (5,5) edge (4,6) (6,6) edge (5,7);

\draw[-] (0,0) edge (6,6) (-1,1) edge (5,7) (-2,2) edge (1,5) (-3,3) edge (-2,4);
    
\node[circ, on grid, below = of 0,label=below:$-1$](-1/2){};
\node[circ, fill=black,on grid, left= of -1/2,label=below:$-2$](-3/2){};
\node[circ, on grid, left= of -3/2,label=below:$-3$](-5/2){};
\node[circ, on grid, right= of -1/2,label=below:$0$](1/2){};
\node[circ, fill=black, on grid, right= of 1/2,label=below:$1$](3/2){};
\node[circ, on grid, right= of 3/2,label=below:$2$](5/2){};
\node[circ, on grid, right= of 5/2,label=below:$3$](7/2){};
\node[circ,on grid, right= of 7/2,label=below:$4$](9/2){};
\node[circ, fill=black,on grid, right= of 9/2,label=below:$5$](11/2){};

\draw[-,dotted] (-5/2) edge (-5/2,7/2) (-3/2) edge (-3/2,7/2) (-1/2) edge (-1/2,7/2) (1/2) edge (1/2,9/2) (3/2) edge (3/2,9/2) (5/2) edge (5/2,9/2) (7/2) edge (7/2,11/2) (9/2) edge (9/2,13/2) (11/2) edge (11/2,13/2);

\draw[fill=black,opacity=0.1] (0,0)--(-3,3)--(-2,4)--(0,2)--(1,3)--(2,2)--(0,0);

\end{tikzpicture}
 \caption{The Maya diagram for the partition $(3,2,2,1,1,1)$. The $3$-core is shaded.}
    \label{mayaex}
\end{figure}
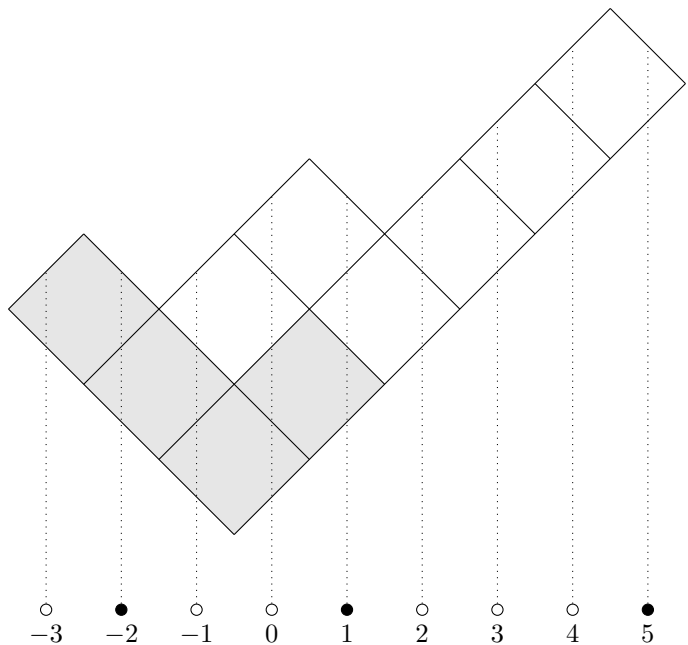
\end{ex}

\subsection{Multi-symmetric functions}

Fix $l\geq 0$ and consider $\Lambda^{(l)}_{t_{1},t_{2}}:=\Lambda_{t_{1},t_{2}}^{\otimes l}$. This vector space is generated as an algebra over $\mathbb{Q}(t_{1},t_{2})$ by the ``colored" power sums $p^{(i)}_{n}$ for $0\leq i \leq l-1$ and $n \geq 1$, which is the element with $p_{n}$ in the $i$th tensorand and $1$ elsewhere.  

We will need a few algebra endomorphisms of $\Lambda^{(l)}_{t_{1},t_{2}}$. Let $\pneg$ be the $\mathbb{Q}(t_1,t_2)$-algebra endomorphism defined by $\pneg(p^{(i)}_{j})=p^{(-i)}_{j}$, where the superscript is understood modulo $l$. Let $\swap$ be the $\mathbb{Q}$-algebra automorphism of $\Lambda^{(l)}_{t_{1},t_{2}}$ given by swapping $t_{1}$ and $t_{2}$. Let $\inv$ be the $\mathbb{Q}$-algebra automorphism given by inverting $t_{1}$ and $t_{2}$.

We will also need the so-called matrix plethysms. For an indeterminant $x$, we define a $\mathbb{Q}(t_{1},t_{2})$-algebra automorphism $\Gamma_{x}$ by 
$$\Gamma_{x}(p_n^{(i)})= p_n^{(i)}-x^n p_n^{(i-1)}$$

The superscripts in the previous two formulas should be taken modulo $l$.

\subsection{Extended Hall pairing}

Given $\lambda^{\bullet}=(\lambda^{0},\ldots,\lambda^{l-1}) \in \parts^{l}$, the corresponding multi-Schur function is
$$
s_{\lambda^{\bullet}}:=s_{\lambda^{(0)}} \otimes \ldots \otimes s_{\lambda^{(l-1)}} \in \Lambda^{(l)}_{t_1,t_2}
$$
The set $\{s_{\lambda^{\bullet}} \, \mid \, \lambda^{\bullet} \in \parts^{l}\}$ is a basis of $\Lambda^{(l)}_{t_1,t_2}$ over $\mathbb{Q}(t_1,t_2)$. There is an extended Hall pairing on $\Lambda^{(l)}_{t_1,t_2}$, defined on multi-Schur functions by
$$
\langle s_{\lambda^{\bullet}}, s_{\mu^{\bullet}} \rangle^{(l)} = \delta_{\lambda^{\bullet}, \mu^{\bullet}}
$$
Combining this fact with the bijection \eqref{quotcore}, we see that the set
$$
\{s_{\quot_{l}(\lambda)} \, \mid \, \lambda \in \parts, \core_{l}(\lambda)=\mu\}
$$
is a basis of $\Lambda^{(l)}_{t_1,t_2}$ for any fixed $l$-core partition $\mu$. Let $\vec{s}_{\lambda}=s_{\quot_{l}(\lambda)}$.

The orthonormality of the multi-Schur functions means that
\begin{equation}\label{schurcauchy}
\sum_{\substack{\lambda \in \parts \\ \core(\lambda)=\mu}} \vec{s}_{\lambda}[x] \vec{s}_{\lambda}[y]= \prod_{i=0}^{l-1} \exp\left(\sum_{n \geq 1} \frac{1}{n} p^{(i)}_{n}[x] p^{(i)}_{n}[y]\right)
\end{equation}
where $\mu$ is a fixed $l$-core partition. In the equality \eqref{schurcauchy}, only finitely many terms contribute to each term of fixed bidegree. The identity \eqref{schurcauchy} follows from the preceding discussion along with the formula for the reproducing kernel for the Hall pairing when $l=1$.

\subsection{Wreath Macdonald polynomials}
Wreath Macdonald polynomials were originally defined by Haiman in \cite{Haiman1}. Their existence was proved using geometric techniques in \cite{BFwreath}. We review their definition, following the expositions given in \cite{OrrShim} and \cite{Wen}.

Let $\unlhd$ denote the dominance order on partitions. We will write $\lambda \unlhd_{l} \mu$ to mean that $\core_{l}(\lambda)=\core_{l}(\mu)$ and $\lambda \unlhd \mu$. 

\begin{defn}[\cite{Haiman1}]
    The Wreath Macdonald polynomial $H_{\lambda}$ is the unique element of $\Lambda_{t_{1},t_{2}}^{(l)}$ satisfying the following conditions:

    \begin{itemize}
        \item $\Gamma_{t_{1}}(H_{\lambda})\in \bigoplus_{\mu  \unrhd_l \lambda} \Q(t_{1},t_{2}) s_{\mu}$
        \item $\Gamma_{t_{2}^{-1}}(H_{\lambda})\in \bigoplus_{\lambda \unrhd_l \mu} \Q(t_{1},t_{2}) s_{\mu}$
        \item $\langle s_{((|\lambda|),\emptyset,\ldots,\emptyset)}, H_{\lambda}\rangle^{(l)} =1$ 
    \end{itemize}
\end{defn}
Note that in the last condition, $((|\lambda|),\emptyset,\ldots,\emptyset)$ is an $l$-tuple of partitions.

\subsection{Wreath Hall pairing}

We also consider the $(t_{1},t_{2})$-deformation of the extended Hall pairing defined by
\begin{equation}\label{wreathhall}
\langle f, g \rangle_{q,t}^{(l)}:= \left \langle f, \pneg \, \Gamma_{t_1} \, \pneg \, \Gamma_{t_2} \, \pneg  \,g \right \rangle^{(l)}
\end{equation}
For a fixed $l$-core partition $\mu$, the collections $\{H_{\lambda} \, \mid \, \core_{l}(\lambda)=\mu\}$ and $\{\inv \, \swap \, H_{\lambda'} \, \mid \, \core_{l}(\lambda)=\mu\}$ are orthogonal bases under this scalar product and
\begin{align}\nonumber
N_{\lambda}&:= \langle H_{\lambda}, \inv \,  \swap \, H_{\lambda'} \rangle \\
&=  \prod_{\substack{\square \in \lambda \\ h_{\lambda}(\square) \equiv 0 \mod l}} (1-t_1^{l(\square)+1}t_2^{-a(\square)})(1-t_1^{-l(\square)}t_2^{a(\square)+1}) \label{orthog}
\end{align}
see \cite{OrrShim} section 3.6.

\subsection{Cauchy identities}

\begin{prop}
Fix $l$ and an $l$-core partition $\mu$. Then
    \begin{multline*}
\sum_{\substack{\lambda \in \parts \\ \core(\lambda)=\mu}} N_{\lambda}^{-1} H_{\lambda}[x] \inv( \swap (H_{\lambda'}[y])) \\ = \exp\left(\sum_{i=0}^{l-1} \sum_{n \geq 1}  p^{(i)}_{n}[x] \pneg(\Gamma_{t_2}^{-1}( \pneg( \Gamma_{t_1}^{-1}( \pneg(p^{(i)}_{n}[y]))))) \right) \\
=\exp\left(\sum_{i=0}^{l-1} \sum_{n \geq 1}\frac{p^{(i)}_{n}[x]}{n (1-t_1^{nl})(1-t_2^{nl})} \sum_{\substack{j,k=0}}^{l-1}  t_1^{nj} t_2^{nk} p^{(-i-j+k)}_{n}[y]\right)
     \end{multline*}
\end{prop}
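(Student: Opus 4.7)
The plan is to identify the left-hand side as the reproducing kernel of the wreath Hall pairing $\langle \cdot, \cdot\rangle^{(l)}_{q,t}$ and then to compute this kernel by transporting the extended Hall kernel \eqref{schurcauchy} through the plethystic operator appearing in \eqref{wreathhall}.

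Set $T := \pneg\, \Gamma_{t_1}\, \pneg\, \Gamma_{t_2}\, \pneg$, so that \eqref{wreathhall} reads $\langle f, g\rangle^{(l)}_{q,t} = \langle f, Tg\rangle^{(l)}$. For a fixed $l$-core $\mu$, the orthogonality \eqref{orthog}, together with the fact that $\{H_{\lambda}\}$ and $\{\inv\, \swap\, H_{\lambda'}\}$ (both indexed by $\core_l(\lambda) = \mu$) are bases of $\Lambda^{(l)}_{t_1,t_2}$---a consequence of the triangularity in the defining conditions of wreath Macdonald polynomials against $\{\vec{s}_{\lambda}\}$---exhibits the left-hand side as the reproducing kernel $K_{q,t}(x,y)$ for $\langle \cdot, \cdot\rangle^{(l)}_{q,t}$, and in particular shows that it is independent of the choice of $\mu$. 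Identity \eqref{schurcauchy} similarly expresses the reproducing kernel $K_0(x,y)$ for the untwisted pairing $\langle \cdot, \cdot\rangle^{(l)}$. Applying $T_y^{-1}$ to both sides of the reproducing identity $\langle K_0(x,y), f(x)\rangle^{(l)}_x = f(y)$, and using that $T$ acts only in the second slot of $\langle \cdot, \cdot\rangle^{(l)}$, yields $K_{q,t}(x,y) = T_y^{-1} K_0(x,y)$. Since $T_y^{-1} = \pneg\, \Gamma_{t_2}^{-1}\, \pneg\, \Gamma_{t_1}^{-1}\, \pneg$ is a $\mathbb{Q}(t_1,t_2)$-algebra endomorphism, it commutes with the formal exponential and can be applied termwise to the power-sum generators, producing the middle expression of the proposition.

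For the second equality, it remains to compute $T_y^{-1}(p_n^{(i)}[y])$ explicitly. For each fixed $n$, $\Gamma_x$ preserves the $l$-dimensional subspace spanned by $p_n^{(0)}, \ldots, p_n^{(l-1)}$ and acts on it as $I - x^n C$, where $C$ is the cyclic shift $p_n^{(i)} \mapsto p_n^{(i-1)}$. Since $C^l = I$ on this subspace, the geometric series yields
$$
\Gamma_x^{-1}(p_n^{(i)}) = \frac{1}{1 - x^{nl}} \sum_{k=0}^{l-1} x^{nk}\, p_n^{(i-k)}.
$$
Chaining $\pneg\, \Gamma_{t_2}^{-1}\, \pneg\, \Gamma_{t_1}^{-1}\, \pneg$ through this formula, and tracking superscripts modulo $l$, gives $T_y^{-1}(p_n^{(i)}[y]) = \bigl((1 - t_1^{nl})(1 - t_2^{nl})\bigr)^{-1} \sum_{j,k=0}^{l-1} t_1^{nj} t_2^{nk}\, p_n^{(-i-j+k)}[y]$, matching the right-hand side. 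The only real obstacle is careful index bookkeeping modulo $l$; apart from this, the argument is mechanical.
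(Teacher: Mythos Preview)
Your proof is correct and follows essentially the same route as the paper's: the first equality is obtained by recognizing the left-hand side as the reproducing kernel for $\langle\cdot,\cdot\rangle^{(l)}_{q,t}$ and computing it as $T_y^{-1}$ applied to the kernel \eqref{schurcauchy}, while the second equality uses exactly the formula $\Gamma_x^{-1}(p_n^{(i)}) = (1-x^{nl})^{-1}\sum_{j=0}^{l-1} x^{nj} p_n^{(i-j)}$ that the paper records. Your write-up simply fills in the details behind the paper's one-line references to \eqref{schurcauchy}, \eqref{wreathhall}, and \eqref{orthog}.
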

\begin{proof}
    The first line follows \eqref{schurcauchy}, \eqref{wreathhall}, and \eqref{orthog}. The second line follow from the formula 
  \begin{equation*}
\Gamma^{-1}_{x}(p^{(i)}_{n})=(1-x^{nl})^{-1}\sum_{j=0}^{l-1}  x^{nj} p^{(i-j)}_{n}
\end{equation*}

\end{proof}

The following evaluation formula was recently proved by Wen and Romero.

\begin{prop}[\cite{josh}]\label{prop: classicaleval}
Let $0 \leq m \leq l-1$. If $\core(\lambda)=\emptyset$, then
    $$
    H_{\lambda}|_{p^{(j)}_{k} =\delta_{0,j} - \delta_{m,j} (-u)^{k}}  = \prod_{\substack{\square=(a,b) \in \lambda \\ a-b \equiv m \mod l}}(1+u t_1^{b-1} t_2^{a-1})
    $$
\end{prop}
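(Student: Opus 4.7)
The plan is to specialize the wreath Cauchy identity proven immediately above. Let $\varepsilon_{u,m}$ denote the substitution $p^{(j)}_k \mapsto \delta_{0,j} - \delta_{m,j}(-u)^k$; the goal is to compute $\varepsilon_{u,m}(H_\lambda)$.

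First, I would apply $\varepsilon_{u,m}$ to the $x$-variables in the Cauchy identity. Since only $p^{(0)}_n[x] = 1$ and $p^{(m)}_n[x] = -(-u)^n$ survive, the right-hand side collapses to an explicit exponential of the form $\exp\left(\sum_{r,n} c_{r,n}(u) p^{(r)}_n[y]/n\right)$, where the coefficients $c_{r,n}(u)$ are rational functions of $u, t_1, t_2$ obtained from the inner double sum $\sum_{j,k=0}^{l-1} t_1^{nj}t_2^{nk} p^{(-i-j+k)}_n[y]$ at $i = 0$ and $i = m$.

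Second, I would extract $\varepsilon_{u,m}(H_\lambda)$ on the left by pairing in the $y$-variables with $H_\lambda[y]$ under the wreath Hall pairing. The biorthogonality $\langle H_\lambda, \inv \, \swap \, H_{\mu'}\rangle^{(l)}_{q,t} = N_\lambda \delta_{\lambda,\mu}$ collapses the sum to $\varepsilon_{u,m}(H_\lambda)$, while the right-hand side becomes a plethystic evaluation of $H_\lambda$ at a formal alphabet $\mathcal{A}_{u,m}$ determined by the $c_{r,n}(u)$.

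The main obstacle, which carries the combinatorial content of the proposition, is identifying $H_\lambda[\mathcal{A}_{u,m}]$ with the product $\prod_{(a,b) \in \lambda,\, a-b \equiv m \bmod l}(1 + u t_1^{b-1} t_2^{a-1})$. When $\core_l(\lambda) = \emptyset$, the partition $\lambda$ is determined by its $l$-quotient $(\lambda^{(0)}, \ldots, \lambda^{(l-1)})$, and the boxes of $\lambda$ with content residue $m$ modulo $l$ correspond bijectively via the Maya-diagram encoding to the boxes of a single quotient component. The natural strategy is to recognize $\mathcal{A}_{u,m}$ as the wreath analog of the classical alphabet $1-u$ underlying the evaluation formula for the modified Macdonald polynomial $\tilde{H}_\nu[1-u;\, q,t] = \prod_\square(1 - u q^{a'(\square)} t^{l'(\square)})$, and thereby reduce the wreath evaluation to the classical one applied to the relevant quotient component. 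Matching wreath Macdonald data with classical Macdonald data on each quotient piece is the key technical step and is implicit in the geometric interpretation of $H_\lambda$ via the $K$-theory of cyclic quiver varieties in \cite{BFwreath}.
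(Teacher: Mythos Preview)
The paper does not prove this proposition; it is quoted from Wen--Romero \cite{josh} and used as an input. So there is no ``paper's proof'' to compare against, and the relevant question is simply whether your argument stands on its own.

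It does not, for a structural reason: your first two steps are circular. Specializing the $x$-variables in the Cauchy identity and then extracting the $\lambda$-term by pairing in $y$ against the dual basis recovers exactly $\varepsilon_{u,m}(H_\lambda)$ on both sides. The Cauchy kernel is the reproducing kernel for the wreath Hall pairing, so pairing the specialized exponential against $H_\mu[y]$ returns $H_\mu$ evaluated at the same alphabet you started with. In other words, the ``formal alphabet $\mathcal{A}_{u,m}$'' you obtain on the right is nothing other than the alphabet defined by $p^{(j)}_k \mapsto \delta_{0,j} - \delta_{m,j}(-u)^k$ that you began with on the left. No information has been gained. (Indeed, in the paper the logic runs the other way: Proposition~\ref{prop: classicaleval} is combined with the Cauchy identity to \emph{produce} Lemma~\ref{classicalformula}, not derived from it.)

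What remains after stripping away the tautology is your ``main obstacle'' paragraph, and that is where the entire content of the proposition lives. The sketch there is not a proof: while it is true that boxes of a fixed content residue correspond to a single quotient component via the Maya-diagram bijection, there is no general mechanism by which the wreath Macdonald polynomial $H_\lambda$ factors as a product of ordinary modified Macdonald polynomials $\tilde H_{\lambda^{(i)}}$ on the quotient pieces. Such a factorization fails already for small examples, so ``reduce to the classical evaluation on the relevant quotient component'' is not a valid step without substantial further argument. The reference to \cite{BFwreath} does not supply this either; that paper establishes existence of wreath Macdonald polynomials geometrically but does not give the evaluation formula you need.
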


Combining the above proposition with the Cauchy identity gives the following.
\begin{lem}\label{classicalformula}
Let $0 \leq m \leq l-1$. Then
      \begin{multline*}
\sum_{\substack{\lambda \in \parts \\ \core(\lambda)=\emptyset}} N_{\lambda}^{-1}  H_{\lambda} \prod_{\substack{(a,b) \in \lambda' \\ a-b \equiv m \mod l}} (1+u t_1^{1-a} t_2^{1-b}) \\
= \exp\left(\sum_{i=0}^{l-1} \sum_{n \geq 1}\frac{p^{(i)}_{n}}{n (1-t_1^{nl})(1-t_2^{nl})} \left(C^{(i)}_{0,n}-C^{(i)}_{m,n}(-u)^{n}\right)\right)
    \end{multline*}
    where 
    $$
C^{(i)}_{m,n}=\sum_{\substack{j,k=0 \\ -i-j+k \equiv m \mod l}}^{l-1}  t_1^{nj} t_2^{nk}
    $$
\end{lem}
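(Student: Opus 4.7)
The plan is to specialize the $y$-variables in the Cauchy identity of the preceding proposition using the evaluation formula of Proposition \ref{prop: classicaleval}, and then carefully bookkeep both sides.

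First, I would set up the specialization $p^{(j)}_{n}[y] = \delta_{0,j} - \delta_{m,j}(-u)^{n}$ and analyze the left-hand side of the Cauchy identity. The key observation is that this specialization is a $\mathbb{Q}(t_{1},t_{2})$-algebra homomorphism on the $y$-part that does not involve $t_{1}$ or $t_{2}$, so it commutes with the operators $\inv$ and $\swap$, which act only on $t_{1}, t_{2}$. Therefore
\[
\inv\bigl(\swap(H_{\lambda'}[y])\bigr)\Big|_{p^{(j)}_{n}[y] = \delta_{0,j}-\delta_{m,j}(-u)^n} = \inv\bigl(\swap\bigl(H_{\lambda'}[y]|_{\text{spec}}\bigr)\bigr).
\]
By Proposition \ref{prop: classicaleval} applied to $\lambda'$ (which has the same core as $\lambda$, hence empty), the inside evaluates to $\prod_{(a,b)\in\lambda',\, a-b\equiv m}(1+u\, t_{1}^{b-1} t_{2}^{a-1})$. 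Applying $\swap$ and then $\inv$ swaps and inverts the exponents of $t_{1}, t_{2}$, producing exactly the product $\prod_{(a,b)\in\lambda',\, a-b\equiv m}(1+u\, t_{1}^{1-a} t_{2}^{1-b})$ that appears on the left-hand side of the lemma.

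Second, I would specialize the right-hand side of the Cauchy identity under the same assignment. Only the inner sum $\sum_{j,k} t_{1}^{nj} t_{2}^{nk} p^{(-i-j+k)}_{n}[y]$ depends on $y$, and substituting $p^{(-i-j+k)}_{n}[y] = \delta_{0,-i-j+k} - \delta_{m,-i-j+k}(-u)^{n}$ (with subscripts modulo $l$) splits this sum into two pieces indexed by the residue of $-i-j+k$, which by definition are exactly $C^{(i)}_{0,n}$ and $C^{(i)}_{m,n}$. This yields the factor $C^{(i)}_{0,n} - C^{(i)}_{m,n}(-u)^{n}$ inside the exponential, matching the lemma.

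The main (and only) step requiring real care is the commutativity of the specialization with $\inv$ and $\swap$, together with tracking the index $-i-j+k$ through the mod-$l$ reduction in the $C^{(i)}_{m,n}$ sums; beyond this, the derivation is a direct substitution, so I do not anticipate a genuine obstacle.
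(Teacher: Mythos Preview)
Your proposal is correct and follows exactly the approach the paper indicates: the paper's entire proof is the single sentence ``Combining the above proposition with the Cauchy identity gives the following,'' and your argument is precisely that combination, spelled out with the bookkeeping for $\inv$, $\swap$, and the mod-$l$ indices made explicit.
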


The left hand side of the previous lemma can also be written as
$$
\sum_{\substack{\lambda \in \parts \\ \core(\lambda)=\emptyset}} N_{\lambda}^{-1}  H_{\lambda} \prod_{\substack{(a,b) \in \lambda \\ a-b \equiv -m \mod l}} (1+u t_1^{1-b} t_2^{1-a})
$$

\section{Nakajima Quiver Varieties}\label{NQVsection}

In this section, we review the geometry relevant for our paper: affine type $A$ Nakajima quiver varieties. 

\subsection{Cyclic quiver varieties}

Let $Q$ be the $\hat{A}_{l-1}$ quiver, which has vertex set $I=\{0,1,\ldots,l-1\}$ and edges $i\to i+1$ for $0 \leq i \leq l-1$. Here and elsewhere, we use the convention that indices corresponding to vertices of $Q$ are taken modulo $l$. 

Choose $\dv,\dw \in \mathbb{Z}^{I}_{\geq 0}$. Let $V_{i}$ and $W_{i}$ be $\mathbb{C}$-vector spaces such that $\dim_{\mathbb{C}} V_{i}=\dv_{i}$ and $\dim_{\mathbb{C}} W_{i}=\dw_{i}$.

Let
$$
\text{Rep}_{Q}(\dv,\dw)=\bigoplus_{i\in I}\Hom(V_{i},V_{i+1}) \oplus \bigoplus_{i \in I} \Hom(W_{i},V_{i})
$$
The group $G_{\dv}=\prod_{i \in I} GL(V_{i})$ acts naturally on $\text{Rep}_{Q}(\dv,\dw)$, inducing a Hamiltonian action on $T^*\text{Rep}_{Q}(\dv,\dw)$ and a moment map
$$
\mu: T^*\text{Rep}_{Q}(\dv,\dw) \to \text{Lie}(G_{\dv})
$$
Choose a character $\theta:G_{\dv} \to \mathbb{C}^{\times}$.

\begin{defn}[\cite{NakALE,NakQv}]
    The Nakajima quiver variety associated to the data $Q$, $\dv$, $\dw$, and $\theta$ is the geometric invariant theory quotient
    $$
\qv_{Q,\theta}(\dv,\dw):=\mu^{-1}(0)/\!\!/_{\theta} G_{\dv}
$$
\end{defn}

A point in $\qv_{Q,\theta}(\dv,\dw)$ is represented by a collection of linear maps $\{A_{i},B_{i},a_{i},b_{i}\}_{i \in I}$ where $A_{i} \in \Hom(V_{i},V_{i+1})$, $B_{i} \in \Hom(V_{i+1},V_{i})$, $a_{i} \in \Hom(W_{i},V_{i})$, and $b_{i} \in \Hom(V_{i},W_{i})$. We will abuse notation and denote points by $(A,B,a,b)$.

For the remainder of this paper, we take $\theta$ to be the positive stability, which is the character defined by $\theta((g_{i})_{i \in I})=\prod_{i \in I} \det(g_{i})$. We will frequently simplify the notation and write $\qv(\dv,\dw)$ (or sometimes just $\qv$) with the choices of $Q$ and $\theta$ (and $\dv$ and $\dw$) understood. We will also denote by $\delta_{j}$ the framing vector with $1$ in position $j$ and zeros elsewhere.

\subsection{Hilbert schemes of points}
Assume that $\dw=\delta_{0}=(1,0,\ldots,0)$. Then $\qv(\dv,\dw)$ has another description as a moduli space of certain ideals in $\mathbb{C}[x,y]$ which we review next. 

The Hilbert scheme of $m$ points in $\mathbb{C}^{2}$, written $\text{Hilb}^{m}(\mathbb{C}^{2})$, is the moduli space of ideals $J \subset \mathbb{C}[x,y]$ such that $\dim_{\mathbb{C}} (\mathbb{C}[x,y]/J)=m$. It can be constructed as a Nakajima variety for the Jordan quiver with $\dv=(m)$ and $\dw=(1)$.

Let $\Gamma_{l}\subset \mathbb{C}^{\times}$ be the cyclic group of $l$th roots of unity with generator $\zeta=e^{2\pi i/l}$. There is an action of $\Gamma_{l}$ on $\mathbb{C}[x,y]$ defined by
\begin{align*}
\zeta \cdot x &= \zeta x \\
\zeta \cdot y &= \zeta^{-1} y
\end{align*}
This induces an action of $\Gamma_{l}$ on $\text{Hilb}^{m}(\mathbb{C}^{2})$

Then $\qv(\dv,\dw)$ is the $\Gamma_{l}$-fixed component of $\text{Hilb}^{|\dv|}(\mathbb{C}^{2})$ consisting of ideals $J$ such that
$$
\text{char}_{\Gamma_{l}} (\mathbb{C}[x,y]/J)= \sum_{i \in I} \dv_{i} \chi^{i}
$$
where $\chi: \Gamma_{l} \to \mathbb{C}^{\times}$ is the character defined by $\chi(\zeta)=\zeta$.

When $\dv=(n,n,\ldots,n)$ and $\dw=(1,0,\ldots,0)$, we denote 
$$
\qv(n):=\qv(\dv,\dw)
$$

\subsection{Torus action and fixed points}\label{sec: fixedpoints}
Let $\bA=(\mathbb{C}^{\times})^{|\dw|}$. There is an action of the torus $\bT=\bA \times (\mathbb{C}^{\times})^{2}$ on $\qv(\dv,\dw)$. The action of the $\bA$ factor on $\qv$ is induced by the action of diagonal matrices on $W_{i}$. The second component of $\bT$ acts by 
$$
(t_{1},t_{2})\cdot (A,B,a,b) = (t_2 A, t_1 B, a, t_1 t_2 b), \quad (t_{1},t_{2}) \in (\mathbb{C}^{\times})^{2}
$$

Next we describe the torus fixed points of $\qv(\dv,\dw)$, which relies on the ``tensor product" of quiver varieties, see \cite{Naktens}. Choose a decomposition $\dw=\dw'+\dw''$ corresponding to a choice of splitting $W_{i}=W_{i}'\oplus W_{i}''$ and let $\bA' \subset \bA$ act trivially on $W_{i}''$ and with weight $1$ on $W_{i}'$. Letting $a$ be the coordinate on $\bA'$, we abbreviate this setup by writing $\dw=a \dw'+\dw''$. Then
$$
\qv(\dv,\dw)^{\bA'}= \bigsqcup_{\substack{\dv',\dv'' \in \mathbb{Z}^{I}_{\geq 0} \\ \dv'+\dv''=\dv}}\qv(\dv',\dw')\times \qv(\dv'',\dw'')
$$
Iterating this decomposition, one sees that $\bT$-fixed points on $\qv(\dv,\dw)$ are given by products of $(\mathbb{C}^{\times})^{2}$-fixed points on $\qv(\dv',\dw')$ where $|\dw'|=1$.

Due to the rotational symmetry of the quiver, it is sufficient to describe $\qv(\dv,\delta_{0})^{(\mathbb{C}^{\times})^{2}}$ where $\delta_{0}=(1,0,\ldots,0)$. It is known that $(\mathbb{C}^{\times})^{2}$-fixed points on $\text{Hilb}^{m}(\mathbb{C}^{2})$ are in natural bijection with partitions $\lambda$ of $m$. Since the embedding $\qv(\dv,\delta_{0}) \subset \text{Hilb}^{|\dv|}(\mathbb{C}^{2})$ is $(\mathbb{C}^{\times})^{2}$ equivariant, the $(\mathbb{C}^{\times})^{2}$-fixed points of $\qv(\dv,\delta_{0})$ are given by partitions $\lambda$ of $|\dv|$ such that the number of boxes of content $i$ modulo $l$ is equal to $\dv_{i}$ for all $i$, see also \cite{combqv}.

For general $\dw$, $\qv(\dv,\dw)^{\bT}$ is thus in bijection with certain $|\dw|$-tuples of partitions. We denote such a tuple by $(\lambda^{(i,j)})_{\substack{0 \leq i \leq l-1 \\ 1 \leq j \leq \dw_{i}}}$. Whenever we discuss tuples of partitions of this form labeling a torus fixed point, we will use the convention the content of a box $\square \in \lambda^{(i,j)}$ is equal to $c(\square)+i$, where $c$ is the usual content function from section \ref{partitions}.

In summary, we have the following.
\begin{prop}\label{fixedpoints}

$\qv(\dv,\dw)^{\bT}$ is in bijection with $|\dw|$-tuples of partitions $(\lambda^{(i,j)})_{\substack{0 \leq i \leq l-1 \\ 1 \leq j \leq \dw_{i}}}$ such that the total number of boxes with content $i$ modulo $l$ is equal to $\dv_{i}$.
\end{prop}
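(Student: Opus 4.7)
The plan is to prove Proposition \ref{fixedpoints} by iteratively applying the tensor product decomposition for quiver varieties, then reducing to the classical description of torus fixed points on the Hilbert scheme of points on $\mathbb{C}^2$.

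First, I would decompose $\bA = (\mathbb{C}^\times)^{|\dw|}$ as the product of one copy of $\mathbb{C}^\times$ for each pair $(i,j)$ with $0 \leq i \leq l-1$ and $1 \leq j \leq \dw_i$, and apply the tensor product decomposition recalled immediately before the proposition one copy at a time. Writing $\dw = \sum_{i,j} \delta_i$ accordingly, iteration of $\qv(\dv,\dw)^{\bA'} = \bigsqcup \qv(\dv',\dw') \times \qv(\dv'',\dw'')$ yields
$$
\qv(\dv,\dw)^{\bA} = \bigsqcup_{\sum \dv^{(i,j)} = \dv} \prod_{i,j} \qv(\dv^{(i,j)},\delta_i),
$$
where the disjoint union is over ways of decomposing $\dv$ as a sum indexed by pairs $(i,j)$. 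Taking further fixed points with respect to the remaining $(\mathbb{C}^\times)^2$ factor, this reduces the proposition to showing that $\qv(\dv,\delta_i)^{(\mathbb{C}^\times)^2}$ is in bijection with partitions $\lambda$ such that the number of boxes of content $\equiv a \bmod l$ (with content shifted by $i$) equals $\dv_a$.

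Second, I would use the rotational symmetry of the cyclic quiver $\hat{A}_{l-1}$ to reduce to the case $\dw = \delta_0$. Concretely, the cyclic shift of vertex labels $a \mapsto a - i \pmod l$ is an isomorphism of quivers carrying $\qv(\dv,\delta_i)$ to $\qv(\dv_{\text{shift}},\delta_0)$, which accounts for the footnote about the content being shifted by $i$. For the case $\dw = \delta_0$, the variety $\qv(\dv,\delta_0)$ is the $\Gamma_l$-fixed component of $\text{Hilb}^{|\dv|}(\mathbb{C}^2)$ consisting of ideals $J$ with $\text{char}_{\Gamma_l}(\mathbb{C}[x,y]/J) = \sum_a \dv_a \chi^a$, and this embedding is $(\mathbb{C}^\times)^2$-equivariant since the $\Gamma_l$ and $(\mathbb{C}^\times)^2$ actions commute.

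Third, I would invoke the classical theorem of Ellingsrud--Str\o{}mme that $\text{Hilb}^{N}(\mathbb{C}^2)^{(\mathbb{C}^\times)^2}$ is indexed by partitions $\lambda$ of $N$, with the fixed point $I_\lambda$ being the monomial ideal spanned by $x^a y^b$ for $(a+1,b+1) \notin \lambda$. The quotient $\mathbb{C}[x,y]/I_\lambda$ has $\mathbb{C}$-basis given by the monomials $x^b y^a$ for $(a,b) \in \lambda$, and under the $\Gamma_l$ action of the preceding subsection the monomial $x^b y^a$ has weight $\zeta^{b-a}$. Hence the $\Gamma_l$-character of $\mathbb{C}[x,y]/I_\lambda$ is $\sum_{(a,b) \in \lambda} \chi^{b-a}$, so the condition that $I_\lambda \in \qv(\dv,\delta_0)$ is precisely that the number of boxes with content $a-b \equiv -k \bmod l$ equals $\dv_k$, which (up to relabeling the indexing) is the stated condition.

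The main technical step is the first one, keeping careful track of which summand of $\bA$ acts on which component during the iterated application of the tensor product decomposition, and checking that the successive fixed point loci assemble cleanly into a single indexing by $|\dw|$-tuples of partitions; all other steps are either direct computations in $\mathbb{C}[x,y]$ or appeals to standard results. The content shift indicated in the footnote is automatic from the rotational symmetry used in the second step.
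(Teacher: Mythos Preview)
Your proposal is correct and follows essentially the same approach as the paper: the discussion immediately preceding the proposition iterates the tensor product decomposition to reduce to $|\dw'|=1$, invokes rotational symmetry to reduce to $\dw=\delta_0$, and then uses the $(\mathbb{C}^\times)^2$-equivariant embedding $\qv(\dv,\delta_0)\subset\text{Hilb}^{|\dv|}(\mathbb{C}^2)$ together with the classical description of fixed points on the Hilbert scheme. Your writeup simply spells out these steps in more detail (e.g.\ naming Ellingsrud--Str\o{}mme and computing the $\Gamma_l$-character explicitly), but the logical structure is identical.
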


As a special case, we have the following.

\begin{prop}
    $\qv(n)^{\bT}$ is in natural bijection with partitions of $nl$ with empty $l$-core.
\end{prop}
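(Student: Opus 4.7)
The plan is to combine Proposition \ref{fixedpoints} specialized to $\dw=\delta_{0}$ with the bijection of Proposition \ref{coreroot} between $l$-cores and the $A_{l-1}$ root lattice. Since $|\dw|=1$, Proposition \ref{fixedpoints} identifies $\qv(n)^{\bT}$ with single partitions $\lambda$ of $nl$ satisfying $c_{i}(\lambda)=n$ for every $i \in \{0,1,\ldots,l-1\}$, where $c_{i}(\lambda)$ denotes the number of boxes of $\lambda$ of content $\equiv i \pmod{l}$ (using the usual content function, since there is no shift for $j=0$). So the content of the statement is to show that this uniform content distribution is equivalent to the vanishing of $\core_{l}(\lambda)$.

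First I would observe that any $l$-strip, being a connected rim hook of length $l$, contains exactly one box of each content class modulo $l$. Therefore removing an $l$-strip decreases every $c_{i}$ by $1$. Iterating, the multisets $(c_{0}(\lambda),\ldots,c_{l-1}(\lambda))$ and $(c_{0}(\core_{l}(\lambda)),\ldots,c_{l-1}(\core_{l}(\lambda)))$ differ by a common constant, namely the number of $l$-strips removed in reducing $\lambda$ to its $l$-core. In particular the differences $c_{i}(\lambda)-c_{0}(\lambda)$ are invariants of the $l$-core.

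Next I would translate into the root lattice using Proposition \ref{coreroot}. Writing $\alpha_{0}=-\sum_{i=1}^{l-1}\alpha_{i}$, the element attached to $\lambda$ is
$$
\sum_{(a,b)\in\lambda}\alpha_{a-b}=\sum_{i=0}^{l-1} c_{i}(\lambda)\,\alpha_{i}=\sum_{i=1}^{l-1}\bigl(c_{i}(\lambda)-c_{0}(\lambda)\bigr)\alpha_{i},
$$
and this equals the corresponding element for $\core_{l}(\lambda)$ by the previous paragraph. Since $\alpha_{1},\ldots,\alpha_{l-1}$ are linearly independent in the $A_{l-1}$ root lattice, this vector is zero if and only if $c_{0}(\lambda)=c_{1}(\lambda)=\cdots=c_{l-1}(\lambda)$. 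By the bijection of Proposition \ref{coreroot}, the root lattice element vanishes precisely when $\core_{l}(\lambda)=\emptyset$.

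Combining these: $\core_{l}(\lambda)=\emptyset$ is equivalent to $c_{0}(\lambda)=\cdots=c_{l-1}(\lambda)$, and given that $|\lambda|=nl$, this common value must be $n$. Hence the fixed-point set described by Proposition \ref{fixedpoints} is precisely the set of partitions of $nl$ with empty $l$-core. There is no real obstacle here; the only point requiring a little care is the bookkeeping that each $l$-strip contains exactly one box from each residue class of contents, which underlies the invariance statement above.
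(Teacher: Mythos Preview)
Your argument is correct. The paper itself does not give a proof of this proposition: it simply states it ``as a special case'' of Proposition \ref{fixedpoints}, leaving implicit the combinatorial fact that a partition of $nl$ has $n$ boxes in each content class modulo $l$ if and only if its $l$-core is empty. You have supplied exactly that missing verification, and your use of Proposition \ref{coreroot} to identify the vanishing of the root-lattice vector $\sum_{i=1}^{l-1}(c_i(\lambda)-c_0(\lambda))\alpha_i$ with the emptiness of the core is clean and correct. The only remark is that your proof is more detailed than what the paper deems necessary; the authors evidently regard the equivalence between uniform content distribution and empty $l$-core as standard, whereas you have spelled it out via the observation that each $l$-strip has contents forming a full residue system modulo $l$.
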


\subsection{Tautological bundles}

The vector spaces $V_{i}$ appearing in the definition of $\qv(\dv,\dw)$ descend to tautological vector bundles $\tb_{i}$ on $\qv(\dv,\dw)$. If $\dw=\delta_{0}$ and $\lambda \in \qv(\dv,\dw)^{\bT}$, then
$$
\text{char}_{\bT}(\tb_{m}|_{\lambda})=\sum_{\substack{(a,b) \in \lambda \\ a-b \equiv m \mod l}} t_1^{1-b} t_2^{1-a}
$$

Similarly, the spaces $W_{i}$ descend to topologically trivial bundles $\mathcal{W}_{i}$.

The following tautological classes are our main interest.

\begin{defn}\label{descdef}
Fix $\dv, \dw$, and let $\tb_{m}$ be the $m$th tautological bundle on $\qv(\dv,\dw)$. Let $\tau_{m}=\sum_{j=0}^{\dv_{m}} \extpow^{j} \tb_{m}$.
\end{defn}

\subsection{Connection to symmetric functions}

\begin{thm}\label{Ksym}(\cite{BFwreath,losev})
    There is an isomorphism of graded vector spaces over $\mathbb{Q}(t_1,t_2)$
    $$
\bigoplus_{\dv} K_{\bT}\left( \qv(\dv,\delta_{0}) \right)_{loc}  \cong \Lambda ^{(l)}_{t_{1},t_{2}} \otimes_{\mathbb{Q}} \mathbb{Q}[Q]
    $$
    such that $[I_{\lambda}] \mapsto H_{\lambda}\otimes e^{\core_{l}(\lambda)}$ where $[I_{\lambda}]$ is the $K$-theory class of the skyscraper sheaf at the fixed point $\lambda \in \bigsqcup_{\dv} \qv(\dv,\delta_{0})^{\bT}$. Here $\mathbb{Q}[Q]$ is the group algebra of $Q$, where $Q$ is the finite type $A_{l-1}$ root lattice and $\core_{l}(\lambda)$ is identified with an element of $Q$ by Proposition \ref{coreroot}.
\end{thm}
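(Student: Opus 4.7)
The plan is to establish the isomorphism by matching bases on both sides and then verifying that the skyscraper classes $[I_\lambda]$ satisfy the defining properties of wreath Macdonald polynomials. As a first step, observe that Thomason localization gives $K_{\bT}(\qv(\dv,\delta_0))_{loc}$ the basis of skyscraper classes $\{[I_\lambda]\}$ indexed by $\lambda \in \qv(\dv,\delta_0)^{\bT}$. By Proposition \ref{fixedpoints}, the union over $\dv$ of these indexing sets is all of $\parts$, with the content-distribution constraint determining $\dv$. On the symmetric-function side, the decomposition $\mathbb{Q}[Q] = \bigoplus_{\mu} \mathbb{Q} \cdot e^{\mu}$ combined with Proposition \ref{coreroot} writes $\Lambda^{(l)}_{t_1,t_2} \otimes \mathbb{Q}[Q]$ as a direct sum indexed by $l$-cores $\mu$, and within each summand the set $\{H_\lambda : \core_l(\lambda) = \mu\}$ is a basis. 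Thus both sides carry a natural basis indexed by $\parts$, and the proposed bijection is dimension-compatible.

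The substance of the theorem is the identification $[I_\lambda] \leftrightarrow H_\lambda \otimes e^{\core_l(\lambda)}$. Following Bezrukavnikov-Finkelberg, the key input is the existence of a Procesi-type tautological bundle $\mathcal{P}$ on $\qv(\dv,\delta_0)$ whose derived pushforward induces an equivalence of derived categories between $\bT$-equivariant coherent sheaves on the quiver variety and modules over the wreath product $\Gamma_l \wr S_{|\dv|}$. Passing to $K$-theory and applying an equivariant Frobenius characteristic yields a morphism into the appropriate graded piece of $\Lambda^{(l)}_{t_1,t_2}$; summing over $\dv$ and labeling each summand by its $l$-core reproduces the target $\Lambda^{(l)}_{t_1,t_2} \otimes \mathbb{Q}[Q]$. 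Under this morphism, the image of $[I_\lambda]$ is a class whose defining properties must be compared with Haiman's three axioms for $H_\lambda$.

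The two triangularity conditions, involving $\Gamma_{t_1}$ and $\Gamma_{t_2^{-1}}$, arise from two different $t$-structures on $D^b_{\bT}(\qv(\dv,\delta_0))$: the standard one and an ``exotic'' one obtained by flipping the stability parameter, equivalently by interchanging the roles of the two weight directions of $(\mathbb{C}^\times)^2$ acting on $T^*\mathrm{Rep}_Q$. The plethysm operators translate geometrically into twisting by tautological line-bundle characters before the Frobenius pushforward, and the partial order $\unlhd_l$ on fixed points with fixed core is exactly the attracting order for a generic cocharacter of $\bT$ - a standard feature of Nakajima varieties that is inherited from the Hilbert-scheme dominance order via the $l$-quotient bijection. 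The normalization condition $\langle s_{((|\lambda|),\emptyset,\ldots,\emptyset)}, H_\lambda\rangle^{(l)} = 1$ reduces to a direct local computation at a single fixed point after identifying the relevant multi-Schur function with a distinguished geometric class; I would verify it by tracking the highest-weight vector in the image of $\mathcal{P}$.

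The main obstacle is the construction of the Procesi bundle and the verification of its derived-equivalence properties on cyclic quiver varieties; this is the technical heart of \cite{BFwreath} and draws on the positive-characteristic quantization machinery of Bezrukavnikov and the derived McKay correspondence. The alternative approach in \cite{losev}, based on the representation theory of symplectic reflection algebras and category $\mathcal{O}$, avoids some of this input but still requires comparable structural theorems. Granted these black boxes, the remaining steps are essentially a careful bookkeeping of fixed points, weights, and $t$-structure comparisons — the match of triangularity conditions with Haiman's axioms being the most delicate piece, since one must show that the two exchanges $\Gamma_{t_1}$ and $\Gamma_{t_2^{-1}}$ correspond precisely to the two flips of stability parameter (and not some twisted variant).
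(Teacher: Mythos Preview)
The paper does not prove this theorem at all: it is stated with attribution to \cite{BFwreath} and \cite{losev} and then used as a black box. There is no argument in the paper to compare your proposal against.

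Your sketch is a reasonable high-level outline of the Bezrukavnikov--Finkelberg approach (Procesi bundle, derived equivalence, matching triangularity with Haiman's axioms via two $t$-structures), and you correctly flag that the hard content lies in those cited works rather than in anything done here. For the purposes of this paper, however, no such sketch is needed or offered; the theorem is simply imported from the literature.
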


Let $\qv:=\bigsqcup_{n \geq 0} \qv(n)$. Then, under the identification of Theorem \ref{Ksym}, the left hand side of Lemma \ref{classicalformula} should be interpreted geometrically as
$$
\sum_{\substack{\lambda \in \parts \\ \core(\lambda)=\emptyset}} \frac{[I_{\lambda}]}{\bigwedge^{\bullet}\left( T \qv^{\vee}|_{\lambda} \right)} \left(\cver{\tau_{-m}}_{\lambda}\right)\bigg|_{\substack{z_{i}=0 , \,  0 \leq i \leq l-1}}
$$
where $\cver{\tau_{-m}}_{\lambda}$ is the capped vertex function with descendant $\tau_{-m}$ to be defined in the next section. Because of the equality
$$
f = \sum_{\substack{\lambda \in \parts  \\ \core(\lambda)=\emptyset}} \frac{f|_{\lambda} [I_{\lambda}]}{\bigwedge^{\bullet}\left(T\qv^{\vee}|_{\lambda}\right)}
$$
in localized equivariant $K$-theory, the left hand side of Lemma \ref{classicalformula} is simply $\cver{\tau_{-m}}|_{z_{i}=0, \, 0 \leq i \leq l-1}$.

\section{Quasimap counts}

In this section, we will review the main objects appearing in the $K$-theoretic enumerative geometry of Nakajima varieties. Our main references for this section are \cite{qm,Ok17,OkSm}. Since we will not contributed anything new here, we will pass quickly over the details.

\subsection{Moduli of quasimaps}

As in previous sections, we fix $l \geq 0$ and consider quiver varieties $\qv:=\qv(\dv,\dw)$ for the type $\hat{A}_{l-1}$ quiver. For $d \in \mathbb{Z}^{l}$, let $\qm^d:=\qm^{d}(\mathbb{P}^{1}\to \qv)$ denote the moduli space of stable degree $d$ quasimaps from $\mathbb{P}^1$ to $\qv$, see  \cite{Ok17} section 4. In particular, a quasimap from $\mathbb{P}^{1}$ to $\qv$ is a map from $\mathbb{P}^{1}$ to the quotient stack $\stackqv:= [\mu^{-1}(0)/G_{\dv}]$. 

Let $\mathbb{C}_q^{\times}$ be the standard torus acting on $\mathbb{P}^1$ by scaling coordinates with weight $q$:

$$[z_1:z_2]\mapsto [qz_1:z_2]$$

This induces a $\mathbb{C}^{\times}_{q}$ action on $\qm^{d}$. Similarly, the $\bT$-action on $\qv$ induces an action on $\qm^{d}$. Let $G=\bT \times \mathbb{C}^{\times}_{q}$.

There is a subset of the moduli space that we will primarily be interested in: the open subset of quasimaps of degree $d$ nonsingular at $0$. By definition, this is $\qm^{d}_{\ns 0}= \{f \in \qm^{d} \, \mid \, f(0) \in \qv \subset \stackqv\}$. By definition, there are $G$-equivariant evaluation maps
$$
\begin{tikzcd}
    & \qm^{d}_{\ns 0}  \arrow{ld}{\ev_{0}} \arrow{rd}{\ev_{\infty}} & \\
    \qv & & \stackqv
\end{tikzcd}
$$
By \cite{Ok17} sections 6 and 7, there is a space $\qm^{d}_{\rel 0}$ and a commutative diagram
\begin{equation}\label{qmdiagram}
\begin{tikzcd}
(\qm^{d}_{\ns 0})^{\mathbb{C}^{\times}_{q}}  \arrow[hookrightarrow]{r} \arrow{rd} &   \qm^{d}_{\ns 0} \arrow[hookrightarrow]{r} \arrow{d}{\ev_{0}} & \qm^{d}_{\rel 0}  \arrow{ld}[below]{\hat{\ev}_{0}}\\
& \qv &
\end{tikzcd}
\end{equation}
such that the two diagonal maps are proper. We will abuse notation and write $\ev_{0}$ for the southeast pointing map.

It is known that $K_{G}(\stackqv)=K_{G \times G_{\dv}}(pt)$, which is generated over $K_{G}(pt)$ by symmetric Laurent polynomials in the Chern roots of $\tb_{i}$ for $0 \leq i \leq l-1$. We denote elements of $K_{G \times G_{\dv}}(pt)$ by $\tau$. Abusing notation, we denote their restriction to $K_{G}(\qv)$ by the same symbol.

Moduli spaces of quasimaps come equipped with a perfect obstruction theory, and hence natural virtual classes. We denote by $\vrs^{d}$ the symmetrized virtual structure sheaf on a moduli space of quasimaps of degree $d$, see \cite{Ok17} section 6. The symmetrization requires a choice of a polarization, which we take to be the virtual bundle
$$
T^{1/2}=\hbar \sum_{i=0}^{l-1} \Hom(\tb_{i+1},\tb_{i}) + \hbar \sum_{i=0}^{l-1} \Hom(\tb_{i},\mathcal{W}_{i})- \hbar \sum_{i=0}^{l-1} \Hom(\tb_{i},\tb_{i})
$$

\subsection{Vertex}

Okounkov makes the following definition.

\begin{defn}(Section 7 of \cite{Ok17})
    The bare vertex with descendant $\tau \in K_{G \times G_{\dv}}(pt)$ is the generating function

    $$\ver{\tau}(z):  = \mathcal{K}^{-1/2} \sum_{d} \ev_{0,*}\left(\qm^d_{\ns 0},\vrs^{d} \otimes \ev_{\infty}^{*}(\tau) \right)z^d \in K_{G}\left(\qv\right)_{loc}[[z]]$$
    where $\mathcal{K}$ is the canonical bundle of $\qv$.

\end{defn}
In the previous definition, the pushforward $\ev_{0,*}$ is defined by $\mathbb{C}^{\times}_{q}$-localization using diagram \eqref{qmdiagram}. In the above definition we use $z^d=z_0^{d_0}\cdots z_{l-1}^{d_{l-1}}$ to keep track of degrees of quasimaps in what are usually called $\textit{K\"{a}hler parameters}$. The collection of $d \in \mathbb{Z}^{l}$ such that $\qm^{d} \neq \emptyset$ forms a cone; the notation $[[z]]$ should be understood as representing a completion of the semigroup algebra of this cone.

For quiver varieties with finitely many torus fixed points and stability condition $\theta=\pm (1,1,\ldots,1)$ (which includes all varieties studied in this paper), bare vertex function can be computed by equivariant localization. For examples of computations in this regard, see \cite{D5Vertex,dinksmir2,liu,dinksmir}.

\subsection{Capped vertex}

Using the space $\qm^{d}_{\rel 0}$ in \eqref{qmdiagram}, Okounkov defines the following.

\begin{defn}(Section 7 of \cite{Ok17})
    The capped vertex with descendant $\tau\in K_{G \times G_{\dv}}(pt)$ is the generating function
    $$\cver{\tau}(z):=\canon^{-1/2}\sum_{d} \hat{\ev}_{0,*}\left(\qm^d_{\rel 0},\vrs^{d} \otimes \ev^{*}_{\infty}(\tau)\right)z^d \in K_{G}\left(\qv\right)_{}[[z]]$$
\end{defn}
Since $\hat{\ev}_{0}$ is proper, the capped vertex lives in non-localized equivariant $K$-theory, in contrast the bare vertex.

A remarkable rigidity theorem proved by Okounkov in \cite{Ok17} is that, for sufficiently large framing, the capped vertex function obeys a property known as \textit{large framing vanishing}.

\begin{thm}(Theorem 7.5.23 in \cite{Ok17}) \label{LargeFrame}
    Let $\tau\in K_{G \times G_{\dv}}(pt)$. There exists some framing vector $\dw(\tau)$ such that for any $\dw\geq \dw(\tau)$\footnote{Here we mean $\dw \geq \dw' \iff \dw_{i} \geq \dw'_{i}$ for all $i$.} the capped vertex with descendant $\tau$ for $\qv(\dv,\dw)$ is purely classical:
    $$\cver{\tau}(z)= \tau$$
\end{thm}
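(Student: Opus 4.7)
The plan is to adapt the rigidity strategy developed by Okounkov in \cite{Ok17}. First, I would use the gluing formula for quasimaps, which expresses the capped vertex as $\cver{\tau}(z) = \Psi(z) \, \ver{\tau}(z)$, where $\Psi(z)$ is the capping operator with initial condition $\Psi(0) = \mathrm{id}$. This immediately gives that the constant term in $z$ is the classical restriction $\iota^{*}\tau = \tau$, so the content of the theorem is that all positive-degree terms in $z$ vanish.

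Second, I would invoke two structural facts about $\cver{\tau}(z)$. Since the map $\hat{\ev}_{0}$ is proper, $\cver{\tau}(z)$ lies in non-localized equivariant $K$-theory, so each coefficient of $z^{d}$ is a genuine Laurent polynomial in the $\bA$-equivariant parameters rather than a rational function. Combined with the rationality results of Smirnov and Zhu \cite{Sm16,zhu3}, this means $\cver{\tau}(z)$ is a rational function of $z$ whose coefficients have $\bA$-weights confined to a bounded Newton polytope.

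Third, and this is the heart of the matter, I would produce a quantitative Newton polytope bound via equivariant localization at $\bA$-fixed loci. By Proposition \ref{fixedpoints}, $\qv(\dv,\dw)^{\bA}$ decomposes into products of smaller quiver varieties indexed by splittings of $\dw$, and the virtual localization formula expresses $\cver{\tau}(z)$ at each such component as a sum over quasimap degrees with contributions controlled by the polarization
$$
T^{1/2} = \hbar \sum_{i} \Hom(\tb_{i+1}, \tb_{i}) + \hbar \sum_{i} \Hom(\tb_{i}, \mathcal{W}_{i}) - \hbar \sum_{i} \Hom(\tb_{i}, \tb_{i}).
$$
The key observation is that the framing summand $\hbar \sum_{i} \Hom(\tb_{i}, \mathcal{W}_{i})$ scales linearly with $\dw$, so enlarging $\dw$ provides additional attracting weights in the virtual structure sheaf and forces the support of each $z^{d}$ coefficient into an increasingly narrow range of $\bA$-weights.

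The hard part, which I expect to be the main obstacle, is making the Newton polytope bound sharp enough to pin down an explicit threshold $\dw(\tau)$. Concretely, one must show that for $\dw$ large relative to the degree of $\tau$ in the tautological Chern roots, no $\bA$-monomial compatible with the polytope bound can appear in any coefficient of a positive power of $z$. Once this is established, every quantum correction must vanish identically as an element of $K_{G}(\qv)$, and the rationality in $z$ then promotes this term-by-term vanishing to the full identity $\cver{\tau}(z) = \tau$. The later refinement in Theorem \ref{lfv} can then be proved by a much more delicate case analysis of the slope zero $R$-matrix and wreath Macdonald combinatorics, bypassing the crude estimates needed in the general argument above.
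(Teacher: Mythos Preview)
This theorem is not proved in the paper; it is quoted from Okounkov's lecture notes \cite{Ok17}. However, the paper does recount the mechanism of Okounkov's proof when establishing the refinement Theorem~\ref{lfv}, and your proposal diverges from that mechanism in a way that leaves a genuine gap.

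Okounkov's argument is a rigidity argument in the variable $q$, the weight of the \emph{domain} torus $\mathbb{C}^{\times}_{q}$, not in the framing torus $\bA$. As the paper explains in Section~5.1, the reduction is: $\cver{\tau}_{p}=\tau|_{p}$ whenever $\lim_{q\to\infty}\ver{\tau}_{p}=\tau|_{p}$. One then localizes on the quasimap moduli space with respect to $\mathbb{C}^{\times}_{q}$ and uses the estimate (equation~\eqref{asymp} in the paper, taken from (7.5.14) of \cite{Ok17}) that the contribution of a nonconstant fixed quasimap behaves as $\tau(\mathscr{V})|_{\infty}\,q^{-\deg\qmpol_{p}}$. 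Increasing the framing enlarges the polarization through the summand $\hbar\sum_{i}\Hom(\tb_{i},\mathcal{W}_{i})$, which drives $\deg\qmpol_{p}$ above the $q$-degree contributed by $\tau$, so each nonconstant term vanishes in the limit. Properness then forces the capped vertex to equal its classical term.

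Your proposal instead tries to bound the Newton polytope in the $\bA$-direction. The assertion that ``enlarging $\dw$ \ldots forces the support of each $z^{d}$ coefficient into an increasingly narrow range of $\bA$-weights'' is not justified: as $\dw$ grows, the torus $\bA$ itself changes, the target variety changes, and there is no evident comparison of $\bA$-polytopes across different framings that would yield vanishing. The rationality in $z$ from \cite{Sm16,zhu3} is also not needed here; Okounkov's argument works coefficient-by-coefficient in $z$ and predates those results. Finally, your description of the proof of Theorem~\ref{lfv} is off: it is not proved via the slope-zero $R$-matrix or wreath Macdonald combinatorics, but by exactly the same $q\to\infty$ degree estimate, sharpened via an induction on the number of $l$-strips in the fixed-point partitions.
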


\subsection{Capping operator}
There are also quasimap spaces obtained by imposing a nonsingular condition at $\infty$ and a relative condition at $0$, denoted by $\qm_{\substack{\rel 0,\\ \ns \infty}}^{d}$.

\begin{defn}(Section 8 of \cite{Ok17})\label{cap}
    The capping operator is the generating function
    $$\Psi(z):= \sum_d \ev_{0,*}\otimes \ev_{\infty,*}\left(\qm^d_{\substack{\rel 0,\\ \ns \infty}},\vrs^{d}\right) z^d \in K_{G}\left(\qv\right)^{\otimes 2}_{loc}[[z]]$$
\end{defn}

Using the natural bilinear pairing on equivariant $K$-theory, $(\alpha,\beta)\mapsto \chi(\qv,\alpha \otimes \beta)$, one can view $\Psi(z)$ as an element of $\text{End}\left(K_{G}(\qv)_{loc} \right)[[z]]$. Computing the capped vertex by localization gives the following.

\begin{thm}[Section 7.4 of \cite{Ok17}]\label{VPsiV} The following equation holds:
$$\cver{\tau}(z) = \Psi(z) \ver{\tau}(z)$$
\end{thm}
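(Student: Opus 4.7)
The plan is to apply the $K$-theoretic degeneration/gluing formula for quasimap moduli spaces developed in Section 6 of \cite{Ok17}. The identity is essentially tautological once the gluing formula is unwound, since $\cver{\tau}$, $\Psi$, and $\ver{\tau}$ are all defined via the same machinery of relative and nonsingular conditions at marked points; the proof amounts to rearranging generating functions rather than introducing new geometry.

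First, I would degenerate the domain $\mathbb{P}^{1}$ (with its distinguished points $0$ and $\infty$) into a nodal curve $C_{1}\cup_{p}C_{2}$, where each component is a copy of $\mathbb{P}^{1}$, with $0\in C_{1}$ and $\infty\in C_{2}$, joined at a new node $p$. The scaling action $\mathbb{C}^{\times}_{q}$ extends to this degeneration with $p$ a $\mathbb{C}^{\times}_{q}$-fixed point. The corresponding degeneration of $\qm^{d}_{\rel 0}$ decomposes, via the gluing formula, as a sum over degree splittings $d=d_{1}+d_{2}$ of fiber products
\begin{equation*}
\qm^{d_{1}}_{\substack{\rel 0,\\ \ns p}} \times_{\qv} \qm^{d_{2}}_{\ns p}
\end{equation*}
where the fiber product is taken over the matching evaluation maps at $p$, and the virtual structure sheaf restricts to $\vrs^{d_{1}}\boxtimes \vrs^{d_{2}}$ up to the standard node contribution.

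Second, I would apply this decomposition to the series $\cver{\tau}(z)$. Since the descendant insertion $\ev_{\infty}^{*}(\tau)$ is supported on $C_{2}$ and the relative condition at $0$ lives on $C_{1}$, the pushforward along $\hat{\ev}_{0}$ factors through $p$. Using the natural bilinear pairing $(\alpha,\beta)\mapsto \chi(\qv,\alpha\otimes\beta)$ on $K_{G}(\qv)_{loc}$ at the node, one obtains
\begin{equation*}
\hat{\ev}_{0,*}\bigl(\vrs^{d}\otimes \ev_{\infty}^{*}\tau\bigr) = \sum_{d_{1}+d_{2}=d} \Psi^{d_{1}}\cdot V^{d_{2}}(\tau),
\end{equation*}
where $\Psi^{d_{1}}$ and $V^{d_{2}}(\tau)$ denote the degree-$d_i$ components of $\Psi$ and $\ver{\tau}$, with $\Psi^{d_1}$ viewed as an endomorphism via the pairing at $p$. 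Multiplying by $z^{d}=z^{d_{1}}\cdot z^{d_{2}}$ and summing over $d$ converts the convolution of degree components into the product of generating functions, yielding $\cver{\tau}(z)=\Psi(z)\ver{\tau}(z)$.

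\textbf{Main obstacle.} The principal technical point is verifying that the normalization factors align: the prefactor $\canon^{-1/2}$ on both sides, the symmetrization conventions built into $\vrs$, and the virtual normal bundle contribution at the smoothing of $p$ must combine to produce exactly the pairing $\chi(\qv,\alpha\otimes\beta)$ used to define $\Psi(z)$ as an endomorphism. This compatibility is controlled by the choice of polarization $T^{1/2}$: its restriction at the node determines the symmetrization and is engineered so that the node contribution is absorbed into the desired pairing on $\qv$. Once this bookkeeping is verified, the theorem follows formally from the gluing formula.
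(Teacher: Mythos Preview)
Your approach via the degeneration and gluing formula is correct and is essentially the argument given in the cited reference \cite{Ok17}, Section 7.4. Note that the present paper does not supply its own proof of this theorem: it merely records the statement with the one-line remark ``Computing the capped vertex by localization gives the following,'' deferring entirely to Okounkov. Your write-up is therefore more detailed than what appears here, but it matches the intended argument.
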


In \cite{OkSm}, Okounkov and Smirnov showed that the operator $\Psi(z)$ is the fundamental solution of the \textit{quantum difference equation}. This equation is described by operators in a certain Hopf algebra acting on the equivariant $K$-theory of $\qv$.

\section{Large framing vanishing}

In this section, we prove some technical results about the vanishing of quantum corrections for $\cver{\tau}$ for cyclic quiver varieties. Our main goal is to prove the following result.

\begin{thm}\label{lfv}
    Let $\dv=(n,n,\ldots,n)$ and $\dw=\delta_{0}+\delta_{m}$. Let $x=(\lambda_{0},\lambda_{m})\in \qv(\dv,\dw)^{\bT}$ such that $\lambda_{0}$ (hence $\lambda_{m}$) has empty core. Then $\cver{\tau_{m}}_{x}=\tau_{m}|_{x}$. 
\end{thm}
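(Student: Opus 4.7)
The plan is to adapt Okounkov's rigidity argument for large framing vanishing (\cite{Ok17}, Theorem 7.5.23) to this refined setting, where the specific combination of framing $\dw = \delta_0 + \delta_m$ and descendant $\tau_m$, together with the empty $l$-core condition on $x$, provides the required weight bound.

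First I would use the rationality theorems of Smirnov \cite{Sm16} and Zhu \cite{zhu3} to assert that $\cver{\tau_m}_x$ is a rational function in the K\"ahler parameters $z = (z_0, \ldots, z_{l-1})$. The boundary condition $\cver{\tau_m}_x|_{z=0} = \iota^*(\tau_m)|_x = \tau_m|_x$ is immediate from $\qm^0 \cong \qv$. The goal is then to show this rational function is actually constant by bounding its Newton polytope in $z$.

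The key step is to analyze the $\bT$-character of the degree $d$ contribution
$$
\hat{\ev}_{0,*}(\vrs^d \otimes \ev_\infty^*(\tau_m))|_x \cdot z^d
$$
for $d \neq 0$ via equivariant localization on $\qm^d_{\rel 0}$. Using the tensor product decomposition
$$
\qv(\dv, \delta_0 + \delta_m)^{\bA'} = \bigsqcup_{\dv' + \dv'' = \dv} \qv(\dv', \delta_0) \times \qv(\dv'', \delta_m)
$$
with respect to the subtorus $\bA' \subset \bA$ acting on the second framing direction, the weights at $x = (\lambda_0, \lambda_m)$ factor through the combinatorial data of $\lambda_0$ and $\lambda_m$. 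The polarization $T^{1/2}$ contributes $\hbar$-twisted weights from $\Hom(\tb_{i+1}, \tb_i)$ and $-\Hom(\tb_i, \tb_i)$, the descendant $\tau_m = \sum_j \bigwedge^j \tb_m$ contributes the exterior power weights of the $m$-th tautological bundle at $\ev_\infty$, and the framings $\delta_0$ and $\delta_m$ each contribute a boundary term $\Hom(\tb_i, \mathcal{W}_i)$. The empty-core hypothesis on $\lambda_0$ and $\lambda_m$ forces the content distributions to be uniform across residue classes modulo $l$, producing the symmetric cancellations needed.

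The main obstacle is the explicit weight balance required in the Newton polytope analysis. In Okounkov's original proof, the framing can be made as large as desired to overwhelm the descendant, but here the framing is fixed at $\delta_0 + \delta_m$ and the descendant at $\tau_m$, so the bound on the Newton polytope must hold sharply. I expect that the empty-core condition provides precisely the amount of cancellation between the $\bigwedge^j \tb_m$-weights and the $\Hom(\tb_i, \mathcal{W}_i)$-weights contributed by $\delta_m$ to force the Newton polytope of each degree $d \neq 0$ contribution to drop by one in the $z_i$ direction for each $i$, matching the $z^d$ prefactor. For fixed points with non-empty core the uniform content distribution is lost, and this cancellation should fail, which is consistent with the hypothesis in the theorem.
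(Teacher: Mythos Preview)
Your proposal has a genuine gap: you correctly identify that the aim is to show the quantum corrections vanish, and you invoke rationality plus the boundary condition at $z=0$, but the crucial ingredient---the actual weight estimate forcing constancy---is absent. You write that you ``expect'' the empty-core condition to provide ``precisely the amount of cancellation,'' but that expectation \emph{is} the content of the theorem, and you give no mechanism for it. Moreover, your Newton-polytope-in-$z$ framing does not quite make sense: each degree-$d$ contribution already carries the fixed monomial $z^d$, so there is no polytope to shrink in the K\"ahler variables. Rationality in $z$ tells you nothing about vanishing of individual coefficients.

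The paper's argument works in a different variable entirely. By the logic of \cite{Ok17} section~7.5, $\cver{\tau_m}_x=\tau_m|_x$ follows once one shows $\lim_{q\to\infty} V^{(\tau_m)}_x=\tau_m|_x$, where $q$ is the $\mathbb{C}^\times_q$-weight on the domain $\mathbb{P}^1$. For a nonconstant $\mathbb{C}^\times_q$-fixed quasimap $f$, its contribution behaves like $q^{\deg_q\tau_m(\mathscr{V})|_\infty-\deg\qmpol_{p,f}}$ as $q\to\infty$, and since $\deg_q\tau_m(\mathscr{V})|_\infty\le\deg\mathscr{V}_{m,f}$ it suffices to prove the combinatorial inequality
\[
\deg\mathscr{V}_{m,f}-\deg\qmpol_{p,f}<0.
\]
This is established by induction on the total number of $l$-strips in $\lambda_0$ and $\lambda_m$---and this is exactly where the empty-core hypothesis enters concretely, since an empty-core partition is built entirely from $l$-strips. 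Peeling off one $l$-strip yields a fixed point $x'$ on a smaller cyclic quiver variety (still with constant dimension vector $\dv'=(n',\ldots,n')$) and a quasimap $f'$; two comparison lemmas show that $\deg\qmpol_{p,f}$ drops by at least $d_0+d_m$ under this operation while $\deg\mathscr{V}_{m,f}$ drops by only $d_m$, preserving the inequality. The base case, where one partition is empty and the other is a single hook-shaped $l$-strip, is handled by direct inspection of the reverse plane partition encoding $f$. Your proposed tensor-product decomposition with respect to $\bA'$ plays no role in this argument; it is used elsewhere in the paper for factorizing vertices and capping operators, not for the vanishing estimate.
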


We will prove this theorem at the end of section \ref{sec: last lfv section} after setting up the necessary prerequisites in the next three sections.

\subsection{Torus fixed quasimaps and localization}

In \cite{Ok17}, Okounkov proves that for an arbitrary descendant, the capped vertex will be purely classical for sufficiently large framing. By the logic of the proof in \cite{Ok17} section 7.5, $\cver{\tau}_{p}=\tau|_{p}$ whenever $\lim_{q \to \infty} \ver{\tau}_{p}=\tau|_{p}$. 

Let $f \in \left(\qm_{\ns 0}\right)^{\mathbb{C}^{\times}_{q}}$ be a nonconstant quasimap. The quasimap $f$ provides the data of vector bundles $\mathscr{V}_{i}$ over $\mathbb{P}^{1}$ which are linearized such that $\mathbb{C}^{\times}_{q}$ acts trivially on $\mathscr{V}_{i}|_{0}$. We also obtain a virtual bundle $\qmpol$ over $\mathbb{P}^{1}$. Then $\ev^{*}_{\infty} (\tb_{i})|_{f}=\mathscr{V}_{i}|_{\infty}$. More generally, $\ev^{*}_{\infty}(\tau)|_{f}$ is obtained by replacing the bundles $\tb_{i}$ appearing in $\tau$ by $\mathscr{V}_{i}$ and taking the fiber over $\infty$. So we denote $\tau(\mathscr{V})|_{\infty}:=\ev^{*}_{\infty}(\tau)|_{f}$. The vector space $\tau(\mathscr{V})|_{\infty}$ carries a nontrivial action of $\mathbb{C}^{\times}_{q}$. As bundles over $\mathbb{P}^{1}$, we have
$\mathscr{V}_{i}=\bigoplus_{j} \mathcal{O}(d_{i,j})$. Due to our choice of stability condition, $d_{i,j} \geq 0$ for all $i,j$.

By (7.5.14) of \cite{Ok17}, the $q\to \infty$ limit of the contribution of $f$ to the localization formula for $\ver{\tau}$ behaves like
\begin{equation}\label{asymp}
\tau(\mathscr{V})|_{\infty} q^{-\deg \qmpol_{p}}
\end{equation}
where $\qmpol_{p}$ is the virtual sub-bundle of $\qmpol$ consisting of terms with positive degree.

For $\tau=\tau_{i,j}:=\extpow^{j}\tb_{i}$ it is easy to see that 
$$
\deg_{q} \tau_{i,j}(\mathscr{V})|_{\infty}\leq \deg \mathscr{V}_{i}
$$
So the $q \to \infty$ limit of the \eqref{asymp} will be $0$ if
$$
\deg \mathscr{V}_{i} -\deg \qmpol_{p} <0
$$
By localization, it suffice to prove this for $f \in (\qm_{\ns 0})^{\bT \times \mathbb{C}^{\times}_{q}}$, i.e. for quasimaps fixed additionally by $\bT$.

As discussed in \cite{Ok17} section 7.5, the dependence of the vertex on the polarization is through a simple shift. For the remainder of this section, we take the polarization to be
$$
\sum_{i=0}^{l-1} \Hom(\tb_{i},\tb_{i+1}) + \sum_{i=0}^{l-1} \Hom(\mathcal{W}_{i},\tb_{i})-\Hom(\tb_{i},\tb_{i})
$$

\subsection{Torus fixed quasimaps and reverse plane partitions}

Let $f \in \qm_{\ns 0}\left(\mathbb{P}^{1} \to \qv(\dv,\dw)\right)^{\bT \times \mathbb{C}^{*}_{q}}$ be a quasimap such that $f(0)=x$. Let us recall the combinatorial description of such $f$ in terms of reverse plane partitions.

The moduli space of quasimaps to $\qv(\dv,\dw)$ which are fixed by $\bA$ is isomorphic to the moduli space of quasimaps to $\qv(\dv,\dw)^{\bA}$, see \cite{Ok17} section 7.3. By Proposition \ref{fixedpoints}, it suffices to describe torus fixed quasimaps when $|\dw|=1$. In this case, the fixed point $x$ is given by a single partition $\lambda$. 

As recalled above, a quasimap from $\mathbb{P}^{1}$ to $\qv(\dv,\dw)$ provides the data of vector bundles $\mathscr{V}_{i}$. These split into a direct sum of line bundles $\mathscr{V}_{i}=\bigoplus_{j} \mathcal{O}(d_{i,j})$ where $0 \leq i \leq l-1$ and $1 \leq j \leq \dv_{i}$. If the quasimap is $\bT\times\mathbb{C}^{\times}_{q}$-fixed, then this splitting is additionally graded by the $\bT$-weights of $\tb_{i}|_{\lambda}$ which thus puts the numbers $d_{i,j}$ into canonical bijection with boxes $\square \in \lambda$. One can additionally show, see for example \cite{dinkinsthesis} section 5.1.3, that the integers $d_{i,j}$ must form a ``reverse plane partition" over $\lambda$. In our notation, this means that we have a collection of non-negative integers $d_{\square}$ for each $\square \in \lambda$ such that $d_{\square} \leq d_{\square'}$ whenever $\square'$ is immediately above or right of $\square$, with directions as in section \ref{partitions}. Furthermore, the reverse plane partition uniquely determines the $\bT\times \mathbb{C}^{\times}_{q}$-fixed quasimap.

\subsection{Removing $l$-strips}\label{sec: last lfv section}

Let $\dv$ and $\dw$ be arbitrary. Let $x\in \qv(\dv,\dw)^{\bT}$. Recall from Proposition \ref{fixedpoints} that $x$ is equivalent to a certain tuple of partitions. Assume that not all of these partitions are $l$-core partitions. So there is some partition $\lambda$ which has a removable $l$-strip $\alpha$. Let $\lambda'$ be a partition obtained by removing $\alpha$ from $\lambda$, i.e., $\lambda'=\lambda \setminus \alpha$. The tuple of partitions for $x$, but with $\lambda$ replaced by $\lambda'$, indexes a $\bT$-fixed point $x'$ on some quiver variety $\qv(\dv',\dw)^{\bT}$. 

Let $f \in \qm_{\ns 0}\left(\mathbb{P}^{1} \to \qv(\dv,\dw)\right)^{\bT \times \mathbb{C}^{*}_{q}}$ be a quasimap such that $f(0)=x$. As we recalled above, $f$ is equivalent to a tuple of reverse plane partitions over the partitions for $x$. By disregarding the boxes over $\alpha$ in this tuple of reverse plane partitions, we obtain $f' \in \qm_{\ns 0}\left(\mathbb{P}^{1} \to \qv(\dv',\dw)\right)^{\bT \times \mathbb{C}^{\times}_{q}}$ such that $f'(0)=x'$. 

The quasimaps $f$ and $f'$ provide vector bundles $\mathscr{V}_{i,f}$ and $\mathscr{V}_{i,f'}$. By construction, $\mathscr{V}_{i,f}=\mathscr{V}_{i,f'} \oplus \mathcal{O}(d_{i})$ where $d_{i}\geq 0$ is the component of the reverse plane partition for $f$ corresponding to the unique box in $\alpha$ of content $i$. We remind the reader of our convention for contents of boxes in tuples of partitions given in section \ref{sec: fixedpoints}. The following is clear.

\begin{lem}\label{compareV}
    $$
\deg \mathscr{V}_{i,f}= \deg \mathscr{V}_{i,f'} + d_{i}
    $$
  \end{lem}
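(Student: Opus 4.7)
The strategy is to unpack the correspondence between $\mathbb{C}^{\times}_{q}$-fixed quasimaps and reverse plane partitions, and then observe that the operation of deleting an $l$-strip can only lower the degrees of the tautological bundles.

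First I would recall the standard dictionary (as used in \cite{dinkinsthesis}, section 5.1.3): a $\mathbb{C}^{\times}_{q}$-fixed quasimap $f \in \qm_{\ns 0}(\mathbb{P}^{1} \to \qv(\dv,\dw))$ with $f(0) = x$ is equivalent to, for each partition $\nu$ in the tuple labeling $x$ and each box $\square \in \nu$, a non-negative integer $d_{\square}$, such that the collection $\{d_{\square}\}_{\square \in \nu}$ defines a reverse plane partition on $\nu$. Under the $\mathbb{C}^{\times}_{q}$-equivariant splitting of $\mathscr{V}_{i,f}$ on $\mathbb{P}^{1}$, each box $\square$ of residue $i$ in $x$ contributes a line-bundle summand $\mathcal{O}(d_{\square})$, so that
$$
\deg \mathscr{V}_{i,f} \;=\; \sum_{\substack{\square \text{ a box of } x \\ \text{of residue } i \bmod l}} d_{\square},
$$
a sum of non-negative integers.

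Next, I would use the fact that an $l$-strip $\alpha$ is a border strip of length exactly $l$, containing one box of each residue modulo $l$, and that $\lambda' := \lambda \setminus \alpha$ is again a Young diagram. Restricting $f$'s reverse plane partition on $\lambda$ to $\lambda'$, and keeping the reverse plane partitions on all other partitions in the tuple for $x$ unchanged, produces a valid tuple of reverse plane partitions; hence under the dictionary it corresponds to a $\mathbb{C}^{\times}_{q}$-fixed quasimap $f' \in \qm_{\ns 0}(\mathbb{P}^{1} \to \qv(\dv',\dw))$ with $f'(0) = x'$. This $f'$ coincides with the one constructed in the setup of the lemma.

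Finally, subtracting the two degree formulas gives
$$
\deg \mathscr{V}_{i,f} - \deg \mathscr{V}_{i,f'} \;=\; \sum_{\substack{\square \in \alpha \\ \text{residue } i \bmod l}} d_{\square} \;\geq\; 0,
$$
so setting $d_{i}$ equal to this non-negative integer yields the claim. The only step that requires any real verification is that restricting the reverse plane partition on $\lambda$ to $\lambda'$ still satisfies the monotonicity conditions; this is immediate because the reverse plane partition inequalities are local (between adjacent boxes), and the boxes of $\lambda'$ form a down-closed subset of the box poset of $\lambda$ by removability of $\alpha$. I do not anticipate a serious obstacle; the content of the lemma is a bookkeeping consequence of the quasimap–reverse plane partition dictionary together with non-negativity of $d_{\square}$ imposed by our positive stability condition.
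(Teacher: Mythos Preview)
Your proposal is correct and matches the paper's approach: the paper gives no explicit proof, stating only that the lemma holds ``by construction,'' and your argument is precisely the natural unpacking of that construction via the reverse plane partition dictionary. One small remark: since an $l$-strip contains exactly one box of each residue modulo $l$, your sum $\sum_{\square \in \alpha,\ \text{residue } i} d_{\square}$ is in fact a single term, which is consistent with how the paper later uses $d_{i}$ (as the degree over the unique box of $\alpha$ with content $i$ mod $l$) in the proof of Theorem~\ref{lfv}.
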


To distinguish the virtual bundles $\qmpol$ provided by $f$ and $f'$, we denote them by $\qmpol_{f}$ and $\qmpol_{f'}$. As above, we also have $\qmpol_{p,f}$ consisting of terms with positive degree and similarly for $f'$.

\begin{lem}
    $$
    \deg \qmpol_{p,f'}+ \sum_{i} (\dv_{i-1}-\dv_{i}+\dw_{i})d_{i} \leq \deg \qmpol_{p,f}
    $$
\end{lem}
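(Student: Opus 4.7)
The approach is to decompose $\qmpol_f$ and $\qmpol_{f'}$ into $\mathbb{C}^{\times}_{q}$-equivariant line bundles on $\mathbb{P}^1$ and compare their positive-degree parts summand by summand. Using Lemma~\ref{compareV}, write $\mathscr{V}_{i,f} = \mathscr{V}_{i,f'} \oplus \mathcal{O}(d_i)$ with $d_i \geq 0$, and further decompose $\mathscr{V}_{i,f'} = \bigoplus_{k} \mathcal{O}(e_{i,k})$ with $e_{i,k} \geq 0$, where non-negativity of all degrees is a consequence of the stability of $f$ and $f'$. Substituting these into the polarization
$$
\qmpol = \sum_i \Hom(\tb_i, \tb_{i+1}) + \sum_i \Hom(\mathcal{W}_i, \tb_i) - \sum_i \Hom(\tb_i, \tb_i),
$$
the difference $\qmpol_f - \qmpol_{f'}$ becomes a signed sum of line bundles on $\mathbb{P}^1$ of degrees in the explicit set $\{d_{i+1}-e_{i,k},\; e_{i+1,k}-d_i,\; d_{i+1}-d_i,\; d_i,\; -(d_i - e_{i,k}),\; -(e_{i,k}-d_i),\; 0\}$ coming from the four interaction terms between the new summand $\mathcal{O}(d_i)$ and $\mathscr{V}_{i,f'}$.

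Taking the positive-degree part then expresses $\deg \qmpol_{p,f} - \deg \qmpol_{p,f'}$ as a signed combination of $\max(0,\cdot)$ applied to the above degrees. Using $\max(0, d_i) = d_i$ (since $d_i \geq 0$), the identity $\max(0, a) + \max(0, -a) = |a|$ to combine the two $-\Hom(\tb_i,\tb_i)$ contributions into $-|d_i - e_{i,k}|$, and $\max(0, a) \geq a$ for the edge terms, I extract a lower bound. Rewriting the target right hand side using $\dv_i = \dv_i' + 1$ and the cyclic reindexing $\sum_i \dv_i' d_{i+1} = \sum_i \dv_{i-1}' d_i$ converts $\sum_i (\dv_{i-1} - \dv_i + \dw_i) d_i$ into $\sum_i \sum_k (d_{i+1} - d_i) + \sum_i \dw_i d_i$, a sum indexed by boxes of $\lambda'$ plus the framing term. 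The inequality thus reduces to a local estimate associated to each box of color $i$ in $\lambda'$ of the form
$$
\max(0, d_{i+1} - e_{i,k}) + \max(0, e_{i,k} - d_{i-1}) - |d_i - e_{i,k}| \geq d_{i+1} - d_i,
$$
after which the leftover $\sum_i \max(0, d_{i+1} - d_i) \geq 0$ and the $\dw_i d_i$ terms match on both sides.

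The main technical obstacle is absorbing the subtracted gauge contribution $-|d_i - e_{i,k}|$, which can be as large as the edge terms allow. Closing the per-box estimate requires the structural input from the reverse plane partition description of $\bT$-fixed quasimaps (see \cite{dinkinsthesis} section 5.1.3): the RPP monotonicity on $\lambda$ constrains the heights $d_i$ at boundary boxes of the $l$-strip $\alpha$ against the adjacent heights $e_{j,k}$ in $\lambda'$, so that the positive edge contributions genuinely dominate the gauge subtraction. It is precisely this combinatorial input, not merely $d_i \geq 0$ and $e_{i,k} \geq 0$, that forces the inequality to close and yields the claimed bound.
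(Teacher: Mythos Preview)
Your approach diverges substantially from the paper's, and the crucial step is left incomplete.

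The paper's argument is much coarser and never invokes the reverse plane partition structure. It works at the level of total degrees of the Hom bundles rather than per line bundle summand: using only $\sum_k \max(0,x_k)\geq \sum_k x_k$, the paper lower-bounds the new positive contribution coming from $\Hom(\mathscr{V}'_{i-1},\mathcal{O}(d_i))\oplus\Hom(\mathcal{W}_i,\mathcal{O}(d_i))$ by its raw degree $(\dv_{i-1}-1)d_i-\deg\mathscr{V}'_{i-1}+\dw_i d_i$, bounds the new negative contribution from the $-\Hom(\tb_i,\tb_i)$ terms by $(\dv_i-1)d_i-\deg\mathscr{V}'_i$, and observes that after summing over $i$ the $\deg\mathscr{V}'_i$ pieces cancel cyclically. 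No per-box estimate and no RPP monotonicity enter.

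By contrast, you refine everything down to a per-box inequality
\[
\max(0,d_{i+1}-e_{i,k})+\max(0,e_{i,k}-d_{i-1})-|d_i-e_{i,k}|\;\geq\;d_{i+1}-d_i,
\]
correctly note that it is \emph{false} for generic nonnegative $d_{i-1},d_i,d_{i+1},e_{i,k}$, and then assert that RPP monotonicity is ``precisely the combinatorial input'' that closes it. But you never actually show this: you do not identify which RPP inequalities relate $e_{i,k}$ to the relevant $d_j$'s, nor do you carry out the case analysis. Since a box of $\lambda'$ of a given color need not be comparable in the Young-diagram partial order to the box of $\alpha$ of the same color, the heights $e_{i,k}$ and $d_i$ are in general unrelated by RPP, so it is not clear this route can be completed at the per-box level at all. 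The paper avoids this entirely by not attempting a boxwise estimate.

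In short: the decomposition and reindexing in your first two paragraphs are correct, but the strategy of localizing to a per-box inequality is not the paper's, and your proposal does not contain a proof of that inequality---only a claim that RPP suffices, which is neither substantiated nor what the paper does.
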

\begin{proof}
The only possible negative contributions to $\deg \qmpol_{p,f}$ which appear in $\qmpol_{p,f}$ but not in $\qmpol_{p,f'}$ are
$$
-\sum_{i} \deg \text{Hom}\left(\mathscr{V}_{i}',\mathcal{O}(d_{i}) \right)=-\sum_{i} \left((\dv_{i}-1) d_{i} -\deg \mathscr{V}_{i}'\right)
$$
Some of the additional positive terms in $\deg \qmpol_{p,f}$ are 
\begin{multline*}
\sum_{i} \deg \text{Hom}(\mathscr{V}_{i-1}',\mathcal{O}(d_{i})) + \deg \text{Hom}(\mathcal{O}(0)^{\oplus \dw_{i}},\mathcal{O}(d_{i})) \\
=\sum_{i} \left((\dv_{i-1}-1) d_{i} - \deg \mathscr{V}_{i-1}'+ \dw_{i} d_{i}\right)
\end{multline*}
Adding these together and recalling that the quiver is cyclic gives the result.
\end{proof}

\begin{cor}
    If $\dv_{i-1}-\dv_{i}+\dw_{i} \geq 0$ for all $i$, then 
    $$
\deg \qmpol_{p,f'} \leq \deg \qmpol_{p,f}
    $$
\end{cor}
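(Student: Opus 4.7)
The plan is to deduce the corollary directly from the preceding lemma, which supplies the inequality
\[
\deg \qmpol_{p,f'}+ \sum_{i} (\dv_{i-1}-\dv_{i}+\dw_{i})d_{i} \leq \deg \qmpol_{p,f}.
\]
By Lemma \ref{compareV}, each $d_i$ is non-negative, reflecting the geometric fact that forgetting the reverse plane partition data over the removed $l$-strip $\alpha$ can only decrease the degree of each bundle $\mathscr{V}_{i}$. Under the numerical hypothesis $\dv_{i-1}-\dv_{i}+\dw_{i} \geq 0$ for every $i$, each term of the middle sum is then a product of non-negative integers, so the full sum is non-negative.

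Dropping this non-negative quantity from the left-hand side of the displayed inequality yields $\deg \qmpol_{p,f'} \leq \deg \qmpol_{p,f}$, which is exactly the corollary. No further input is required.

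There is no real obstacle at this step; the substantive bookkeeping has already been absorbed into the previous lemma, which tracked which positive and negative contributions to $\deg \qmpol_{p}$ are gained or lost when passing from $f$ to $f'$. The point of packaging this as a corollary is to set up an inductive stripping argument: as long as $(\dv,\dw)$ satisfies the stated positivity constraint, one can successively remove $l$-strips from the partitions labelling a fixed point without increasing $\deg \qmpol_{p}$, which in combination with the asymptotic \eqref{asymp} is the mechanism that reduces large framing vanishing to fixed points indexed by tuples of $l$-core partitions, as required for Theorem \ref{lfv}.
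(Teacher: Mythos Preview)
Your proof is correct and is exactly the intended argument: the paper leaves the corollary unproven because it follows immediately from the preceding lemma together with $d_i\geq 0$ from Lemma~\ref{compareV}, which is precisely what you wrote. The additional commentary about the role of the corollary in the inductive stripping argument is accurate as well.
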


\begin{cor}\label{comparepol}
    If $\dv_{i}=n$ for all $i$, then $\deg \qmpol_{p,f'} + \sum_{i} \dw_{i} d_{i} \leq \deg \qmpol_{p,f}$. 
\end{cor}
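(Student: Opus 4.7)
The plan is to derive this corollary as an immediate specialization of the preceding lemma, whose conclusion
\[
\deg \qmpol_{p,f'} + \sum_{i} (\dv_{i-1}-\dv_{i}+\dw_{i}) d_{i} \leq \deg \qmpol_{p,f}
\]
holds for arbitrary dimension and framing vectors. Under the hypothesis $\dv_i = n$ for all $i$, each coefficient collapses as $\dv_{i-1} - \dv_i + \dw_i = n - n + \dw_i = \dw_i$, since the balanced cyclic dimension vector kills the ``gauge'' difference $\dv_{i-1} - \dv_i$ at every vertex of $Q$. Substituting into the preceding lemma produces exactly the claim. There is no calculation beyond this substitution and no real obstacle; the only conceptual point is that balanced dimension vectors cause the loop structure of the cyclic quiver to reduce the coefficient of $d_i$ to the framing contribution alone.

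The reason for stating the corollary separately is its downstream use in the proof of Theorem \ref{lfv}. Combined with Lemma \ref{compareV}, which gives $\deg \mathscr{V}_{m,f} = \deg \mathscr{V}_{m,f'} + d_m$, the corollary translates a bound of the form $\deg \mathscr{V}_{m,f'} - \deg \qmpol_{p,f'} \leq 0$ for the stripped quasimap into the strengthened bound
\[
\deg \mathscr{V}_{m,f} - \deg \qmpol_{p,f} \leq d_m - \sum_i \dw_i d_i
\]
for $f$ itself. With $\dw = \delta_0 + \delta_m$ this right-hand side equals $-d_0$ (or $-d_0$ after the doubled contribution at vertex $0$ when $m=0$), which is nonpositive and strictly negative whenever some $l$-strip removed in the course of the induction sits over vertex $0$. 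Iterating the stripping down to the empty-core fixed point, where the base inequality is direct, then produces the negativity needed to force the $q \to \infty$ asymptotics (\ref{asymp}) of each nonconstant torus-fixed contribution to $\ver{\tau_m}$ to vanish, which by the criterion $\lim_{q \to \infty} \ver{\tau}_p = \tau|_p$ implies Theorem \ref{lfv}.
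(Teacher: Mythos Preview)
Your proof is correct and matches the paper's implicit argument: the corollary is stated without proof precisely because it is the immediate substitution $\dv_{i-1}-\dv_i=0$ into the preceding lemma. Your second paragraph is accurate supplementary commentary on the downstream role of the corollary rather than part of its proof, and it aligns with how the paper deploys the inequality in the inductive step of Theorem~\ref{lfv}.
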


\begin{proof}[Proof of Theorem \ref{lfv}]

Now we specialize to the case of $\qv=\qv(\dv, \delta_{0}+\delta_{m})$ where $\dv=(n,n,\ldots,n)$. Let $x \in \qv^{\bT}$ be a torus fixed point given by pair of partitions $(\lambda_0,\lambda_m)$ which both have empty core. We will prove that $\cver{\tau_{m,j}}_{x}$ is purely classical by induction on the total number $l$-strips in $\lambda_{0}$ and $\lambda_{m}$. 

As the base case, we consider the situation where there is a single $l$-strip among the two partitions. For concreteness, we assume $\lambda_{0}=\emptyset$ and $\lambda_{m}$ is a single $l$-strip, which in this case must be a hook with $l$ boxes. A similar argument handles the case where $\lambda_{0}$ is an $l$-strip and $\lambda_{m}=\emptyset$. Let $f \in \left(\qm_{\ns 0}\right)^{\bT \times \mathbb{C}^{\times}_{q}}$ be a nonconstant quasimap such that $f(0)=x$. Write $ \mathscr{V}_{i}=\mathcal{O}(d_{i})$. Let $d'$ and $d''$ the degrees corresponding to the two outermost boxes in the hook. Since $f$ is nonconstant, at least one of $d'$ or $d''$ must be strictly positive. Also, 
\[
\deg \qmpol_{p,f}=d_{0}+d_{m} + \sum_{\substack{i \\ d_{i+1}>d_{i}}} (d_{i+1}-d_{i})=d_{0}+d_{m} + d''-d_{m}+\max(0,d'-d'')
\]
where the first equality is by definition and the second is because $\lambda_{m}$ is a hook. Since $\deg \mathscr{V}_{m,f}=d_{m}$, we have
\begin{multline*}
\deg \mathscr{V}_{m,f} - \deg \qmpol_{p,f} \leq d_{m}-d_{0}-d''-\max(0,d'-d'') \\
\leq-d'' - \max(0,d'-d'')<0
\end{multline*}
Thus the $q \to \infty$ limit of \eqref{asymp} vanishes and $\cver{\tau_{m,j}}_{x}$ is purely classical. This handles the base case.

For the inductive step, we let $\lambda_{0}$ and $\lambda_{m}$ be arbitrary partitions, at least one of which contains a removable $l$-strip. Let $f$ be a nonconstant $\bT\times \mathbb{C}^{\times}_{q}$-fixed quasimap such that $f(0)=x$. As in the first paragraph of this subsection, we can remove an $l$-strip to obtain a fixed point $x'$ on another quiver variety, along with a quasimap $f'$ with $f'(0)=x'$. 

If $f'$ is nonconstant, then by Lemma \ref{compareV}, Corollary \ref{comparepol}, and the inductive hypothesis, we see that
\begin{align*}
&\deg \mathscr{V}_{m,f} - \deg \qmpol_{p,f} = \deg \mathscr{V}_{m,f'}+d_{m}-\deg \qmpol_{p,f}   \\
&\leq \deg \mathscr{V}_{m,f'}+d_{m}-\deg \qmpol_{p,f'} -d_{0}-d_{m} \leq \deg \mathscr{V}_{m,f'} - \deg \qmpol_{p,f'}<0
\end{align*}

Finally, suppose that $x'$ and $f'$ cannot be chosen in such a way that $f'$ is nonconstant. This implies that only one of $\lambda_{0}$ or $\lambda_{m}$ contains a removable $l$-strip and that the other is empty. Without loss of generality, we can assume that $\lambda_{0}=\emptyset$. Furthermore, all the boxes in the reverse plane partition over $\lambda_{m}$ corresponding to $f$ lie over a single $l$-strip $\alpha$ of $\lambda_{m}$. For two boxes $a,b \in \alpha$, we write $a \to b$ if $a$ is an inner corner, $b$ is an outer corner, $b$ lies in the hook based at $a$, and $c(b)>c(a)$. Let $d_{l}$ and $d_{r}$ be the degrees corresponding to the leftmost and rightmost boxes of $\alpha$, respectively. Let $d_{a}$ be the degree for a box $a \in \alpha$, and let $d_{0}$ and $d_{m}$ be the degrees for the (unique) boxes in $\alpha$ of content $0$ and $m$. Then
$$
\deg \qmpol_{p,f}= d_{0}+d_{m}+\sum_{a \to b} (d_{b}-d_{a}) + \max(0,d_{l}-d_{r})
$$
So
\begin{equation}\label{degineq}
\deg \mathscr{V}_{m,f} - \deg \qmpol_{p,f}=-d_{0} -\sum_{a \to b}(d_{b}-d_{a}) -\max(0,d_{l}-d_{r})
\end{equation}
If $d_{b}-d_{a}>0$ for any $a \to b$, then \eqref{degineq} is negative and we are done. Assume that $d_{b}=d_{a}$ for all $a \to b$. Then if $d_{b_{0}}>0$ for some $b_{0}$ (which must be the case for some $b_{0}\in \alpha$ since $f$ is nonconstant), then $d_{b}>0$ for all boxes $b \in \alpha$ such that $c(b)<c(b_{0})$. Thus if $d_{r}>0$, then $d_{0}>0$ and \eqref{degineq} is negative. If $d_{r}=0$, then $\max(0,d_{l}-d_{r})>0$ and \eqref{degineq} is negative. This exhausts all possibilities.

Hence the $q\to \infty$ limit of \eqref{asymp} vanishes and $\cver{\tau_{m,j}}_{x}$ is purely classical.

\end{proof}

\begin{rem}
    We believe that Theorem \ref{lfv} should not depend on the point $p$. However, the inequalities of the present subsection are more complicated when the partitions have nonempty $l$-cores. Since Theorem \ref{lfv} is strong enough for our purposes, we do not pursue this here.
\end{rem}

\section{Fusion operator and quantum toroidal algebra}

In \cite{OkSm}, Okounkov and Smirnov constructed a certain quantum group $\moalgebra$ which acts on the equivariant $K$-theory of quiver varieties. The main result of \cite{OkSm} identifies the quantum difference operator with an operator inside (a certain completion) of $\moalgebra$. In \cite{zhu2}, it was proven that $\moalgebra \cong \qta$, where $\qta$ denotes the quantum toroidal algebra for $\mathfrak{g}=\mathfrak{sl}_{l}$. So we can apply known results about $\qta$ to the enumerative geometry of quiver varieties. Our goal in this section is Proposition \ref{fusion}, and we introduce the minimum amount needed to get there.

\subsection{Quantum toroidal algebra}\label{sec: qta}
We recall a few basic facts about $\qta$, the quantum toroidal algebra of $\mathfrak{g}=\mathfrak{sl}_{l}$, following \cite{Tsym} and \cite{Wen}. Inside $\qta$, there are two copies of the quantum affine algebra $\uqgh$ which are denoted by $h,v:\uqgh \hookrightarrow \qta$ and referred to as the ``horizontal" and ``vertical" subalgebras.

There are three representations of $\qta$ that we will consider. The first is the ``geometric representation"
\[
\rho_{g}:\qta \to \text{End}(\fock), \quad \fock:=\bigoplus_{\dv \in \mathbb{Z}_{\geq 0}^{l}} K_{\bT}(\qv(\dv,\delta_{0}))_{loc} 
\]
which was constructed in \cite{SV}, see also \cite{Neg}. The $K$-theory classes of skyscraper sheaves $[I_{\lambda}]$ of fixed points give a basis of this representation.

The quantum group $\moalgebra$ of \cite{OkSm} is defined as follows. Using stable envelopes, \cite{OkSm} defines a collection of $R$-matrices acting on $\fock \otimes \fock$. Using the FRT procedure of \cite{FRT}, $\moalgebra$ is defined as the algebra generated by all matrix coefficients of these $R$-matrices. It is known that $\moalgebra$ is generated by matrix coefficients of the so-called wall $R$-matrices which we will denote by $\rmat^{\text{OS}}_{w}$ for $w \in\mathbb{Q}^{l}$. 


In particular, the $R$-matrix of the horizontal subalgebra acts on $\fock \otimes \fock$. We write $\rmat$ for the universal quasi $R$-matrix of $\uqgh$.

\begin{thm}[\cite{zhu2} Theorem 7.2]\label{zhuR}
  $\rmat^{\text{OS}}_{0}=(\rho_{g} \otimes \rho_{g})(h(\rmat))$.
\end{thm}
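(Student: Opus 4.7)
The plan is to show the two $R$-matrices coincide by characterizing both as the unique meromorphic solutions to the Yang-Baxter equation on $\fock \otimes \fock$ with prescribed triangularity and normalization properties. The overall strategy reduces the identification to matching matrix coefficients on a generating subset of $\fock \otimes \fock$, then extends by representation-theoretic bootstrapping.

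First I would set up the two sides. On the geometric side, $\rmat^{\text{OS}}_0$ is constructed from stable envelopes for an appropriate chamber, and by the general theory of \cite{MO,OkSm} it is a triangular operator in the torus fixed-point basis, with diagonal entries determined by restrictions of the polarization and off-diagonal entries controlled by attracting cells. On the algebraic side, $\fock$ carries a level-one representation of $\uqgh$ compatible with the identification of Theorem \ref{Ksym}, and the universal quasi $R$-matrix $\rmat$, applied through the embedding $h$, acts on $\fock \otimes \fock$ as a triangular operator in the fixed-point (equivalently, wreath Macdonald) basis, with structure coming from the standard PBW-type expansion of $\rmat$.

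Next I would verify agreement on a generating set. A natural test case is to restrict both operators to small weight subspaces of $\fock \otimes \fock$ (for example $\dv = \delta_i$ in one tensor factor), where $\rmat^{\text{OS}}_0$ can be computed by equivariant localization on a concrete low-dimensional quiver variety and the horizontal $\uqgh$-action is controlled by explicit Chevalley generators. Matching on such subspaces pins down the normalization. To propagate to all of $\fock \otimes \fock$, one invokes the Yang-Baxter equation, coproduct compatibility, and the intertwining property of both $R$-matrices with respect to the vertex operators that build $\fock$ from its vacuum.

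The main obstacle is precisely this propagation step: it requires showing that the subalgebra of the Maulik-Okounkov algebra generated by the matrix coefficients of $\rmat^{\text{OS}}_0$ is isomorphic as a Hopf algebra to $h(\uqgh) \subset \qta$, so that $R$-matrix matching on a generating set forces matching everywhere. Establishing this is the technical heart of Zhu's identification in \cite{zhu2} of the slope zero subalgebra of the Maulik-Okounkov algebra with the horizontal quantum affine subalgebra of $\qta$, and is what distinguishes this identification from a direct computation in a rank-one example.
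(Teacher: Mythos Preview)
The paper does not prove this statement; Theorem \ref{zhuR} is quoted verbatim from \cite{zhu2} and used as a black box, with the accompanying remark noting that \cite{zhu2} in fact establishes a stronger result. There is therefore no proof in the present paper to compare your proposal against.

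Your sketch is a reasonable high-level outline of the shape such an identification takes, and you correctly isolate the crux: one must show that the slope-zero subalgebra of the Maulik--Okounkov quantum group is isomorphic, as a Hopf algebra, to the horizontal copy of $\uqgh$ inside $\qta$. You also correctly note that this is the technical heart of \cite{zhu2} rather than something that can be checked by hand. But as written your proposal is a description of a strategy rather than a proof: the steps ``verify agreement on a generating set'' and ``propagate via Yang--Baxter and coproduct compatibility'' each hide substantial work, and your final paragraph explicitly defers to \cite{zhu2} for exactly the step that carries all the difficulty. In that sense your proposal and the paper agree perfectly: both treat the result as an input from \cite{zhu2} rather than something to be reproven here.
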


\begin{rem}
To be clear, \cite{zhu2} proves a much stronger result. Nevertheless, we only require information about the $R$ matrix for wall $0$.
\end{rem}

The second $\qta$ representation that we consider is the ``fermionic Fock" representation $\rho_{f}: \qta \to \text{End}(\ferfock)$, where $\ferfock$ is the vector space over $\mathbb{Q}(t_1,t_2)$ with basis given by partitions $|\lambda\rangle$. For the definition of the $\qta$ action, see \cite{Wen} section 3.5.

Third, there is the ``bosonic" representation \cite{saito} (also called the vertex representation)
\[
\rho_{b}: \qta \to \text{End}(\bos), \quad \bos:=\Lambda_{t_{1},t_{2}}^{(l)}\otimes_{\mathbb{Q}} \mathbb{Q}[Q]
\]
which has a basis given by wreath Macdonald polynomials $H_{\lambda}$.

Tsymbaliuk proved that there is an isomorphism $T:\ferfock \xrightarrow{\sim} \bos$ which intertwines $\rho_{f}$ with $\rho_{b} \circ \omega$, where $\omega$ is the so-called Miki automorphism of $\qta$ \cite{Tsym}. Wen proved that $T(|\lambda\rangle)=a_{\lambda} H_{\lambda}$ for some $a_{\lambda}\in \mathbb{Q}(t_1,t_2)$ \cite{Wen}. The scalars $a_{\lambda}$ are difficult to compute, and a formula for them is not known \cite{Wen2}.

On the other hand, there exists $b_{\lambda}\in \mathbb{Q}(t_1,t_2)$ such that the map $\fock \to \ferfock$ induced by $[I_{\lambda}]\mapsto b_{\lambda}|\lambda\rangle$ is an isomorphism of $\qta$ representations, see \cite{Nagao}. Combinatorial formulas for $b_{\lambda}$ are known.

\begin{conj}\label{conj: scalars}
    We have the equality $a_{\lambda}=b_{\lambda}^{-1}$.
\end{conj}

\noindent\textbf{Assumption:} We henceforth assume Conjecture \ref{conj: scalars} holds. This implies that the isomorphism $\fock \xrightarrow{\sim} \bos$ induced by $[I_{\lambda}] \to H_{\lambda}$ intertwines $\rho_{g}$ with $\rho_{b} \circ \omega$.

\subsection{Khoroshkin-Tolstoy formula}\label{KTformula}

By the Khoroshkin-Tolstoy formula, see \cite{KT,KT2} and also \cite{zhu1} section 2.8, the quasi $R$-matrix of $\uqgh$ can be written in the form
\begin{equation}\label{ktrmatrix}
\rmat= \rmat^{+} \rmat_{0} \rmat^{-}
\end{equation}

This factorization depends on a choice of normal ordering of the positive affine roots. Let $\Delta$ denote the affine roots. We denote the affine simple roots by $\alpha_{0},\ldots,\alpha_{l-1}$ and the imaginary root by $\delta$. Let $\Delta_{+}^{\text{re}}$ denote the real positive roots. Let $\Delta^{\text{fin}}$ be the finite root system with simple roots $\alpha_1,\ldots,\alpha_{l-1}$, and let $\Delta^{\text{fin}}_{+}$ denote the positive finite roots. We choose a normal ordering, denoted $\leqslant$, such that all of the positive roots of the finite root system precede the imaginary root. 

Then \cite{KT2} constructs a quantum Cartan-Weyl basis for $\uqgh$. It consists of elements $e_{\gamma} \in \uqgh$ for each $\gamma \in \Delta$. In terms of this basis, the factors $\rmat^{\pm}$ of the $R$-matrix are
\begin{equation}\label{eq: KT Rplus}
\rmat^{+}=\prod_{\substack{\gamma \in \Delta_{+}^{\text{re}} \\ \gamma<\delta}} \exp_{\hbar^{-(\gamma,\gamma)}}\left(a(\gamma) e_{\gamma} \otimes e_{-\gamma} \right), \qquad a(\gamma) \in \mathbb{C}(\hbar)
\end{equation}
and
\begin{equation}\label{eq: KT Rminus}
\rmat^{-}=\prod_{\substack{\gamma \in \Delta_{+}^{\text{re}} \\ \gamma>\delta}} \exp_{\hbar^{-(\gamma,\gamma)}}\left(a(\gamma) e_{\gamma} \otimes e_{-\gamma} \right), \qquad a(\gamma) \in \mathbb{C}(\hbar)
\end{equation}
where the products are ordered according to the choice of normal ordering and $\exp$ denotes
\[
\exp_{b}(a)=\sum_{n \geq 0} \frac{a^n}{(n)_{q}}, \quad (n)_{b}:=\frac{1-b^n}{1-b}
\]

By our assumption on the normal ordering, the roots $\gamma$ appearing in the product for $\rmat^{+}$ are of the form $\alpha+k \delta$ where $\alpha \in \Delta^{\text{fin}}_{+}$ and $k \geq 0$. Similarly, all the roots appearing in the product for $\rmat^{-}$ are of the form $\alpha_{0}+\alpha+k \delta$ where $\alpha \in \Delta^{\text{fin}}_{+}$ and $k \geq 0$.

\begin{lem}\label{lem: color support}

Let $\gamma \in \Delta^{\text{re}}_{+}$ and write $\gamma=\beta+k \delta$ where $k \geq 0$ and either $\beta\in \Delta^{\text{fin}}_{+}$ or $\beta=\alpha_0+\alpha$ for some $\alpha \in \Delta^{\text{fin}}_{+}$. Let $\operatorname{supp}(\beta)\subset \{0,\ldots,l-1\}$ be the indices of the simple roots appearing in $\beta$ with nonzero coefficient. Then $\rho_{g}(h(e_{\gamma}))$ (resp. $\rho_{g}(h(e_{-\gamma}))$) acts on $\fock$ in the basis $[I_{\lambda}]$ by adding (resp. removing) boxes of $\lambda$ with content in $\operatorname{supp}(\gamma)$.

\end{lem}
\begin{proof}

    It is proven in \cite{KT2} that elements of the quantum Cartan-Weyl basis of the form $e_{\alpha_i+k \delta}$ (resp. $e_{-(\alpha_i+k \delta)}$) match, up to constants, the coefficients of the $i$th generating current in the positive (resp. negative) half of Drinfeld's realization of $\uqgh$. From the definition of the Fock representation, see Theorem 3.18 of \cite{Wen}, $e_{\alpha_i+k \delta}$ (resp.  $e_{-(\alpha_i+k \delta)}$) thus acts by adding (resp. removing) boxes of content $i$.

Let $\gamma=\beta+k \delta \in \Delta^{\text{re}}_{+}$ be as in the statement of the lemma. The quantum Cartan-Weyl basis is constructed inductively. In particular, if $\beta$ is not a simple root, then $e_{\beta+k \delta}$ is a $\hbar$-twisted commutator of $e_{\beta'+k' \delta}$ and $e_{\beta''+k'' \delta}$ for some $k',k'' \geq 0$ and some roots $\beta',\beta''$ whose supports are contained strictly in $\operatorname{supp}(\beta)$. So the claim follows from the previous paragraph by induction on the size of the support.
\end{proof}

We need the following formula for the action of $h(\rmat_{0})$ in the tensor product of two geometric representations.

\begin{lem}\label{R0}
Under the identification $\fock \cong \bos$, we have
    $$
(\rho_{g} \otimes \rho_{g}) h(\rmat_{0}) = \exp \left(\sum_{i=0}^{l-1} \sum_{n \geq 1} (\hbar^{n/2} -\hbar^{-n/2}) p^{(i)}_{n} \otimes  \frac{\partial}{\partial p^{(i)}_{n}}  \right)
    $$
\end{lem}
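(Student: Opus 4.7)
The plan is to invoke the Khoroshkin--Tolstoy decomposition $\rmat = \rmat^{+} \rmat_{0} \rmat^{-}$ from Section~\ref{KTformula}, so that the claim about $h(\rmat_{0})$ reduces to an explicit computation of the Cartan/imaginary part of the universal $R$-matrix of $\uqgh$ in two copies of the fermionic Fock module. The key input is the standard formula (due to Damiani and to Khoroshkin--Tolstoy) expressing $\rmat_{0}$ as an exponential built out of the imaginary-root Heisenberg generators $h_{i,\pm n}$ of the vertical Drinfeld realization of $\uqgh$, paired via coefficients coming from the inverse of the $\hbar$-deformed symmetrized Cartan matrix of $\mathfrak{sl}_l$:
\begin{equation*}
\rmat_{0} = \exp\!\left(\sum_{n\geq 1} n \sum_{i,j=1}^{l-1} a_{ij}(\hbar^{n})\, h_{i,n}\otimes h_{j,-n}\right).
\end{equation*}
Applying the horizontal embedding $h$ and the fermionic Fock representation $\rho_{f}$ turns each $h_{i,n}$ into a Heisenberg operator on $\Lambda^{(l)}_{t_{1},t_{2}}$ under the identification from Theorem~\ref{Ksym}.

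The second step is to make this Heisenberg action explicit. I would use the well-known description of the horizontal principal Heisenberg on the level-one basic representation of $\uqgh$: in the power-sum presentation there is a triangular linear change of variables $\{h_{i,n}\}\leftrightarrow\{p^{(j)}_{n}\}$ (for each fixed $n$) whose transition matrix $B(n)$ is determined by the action of $h(h_{i,n})$ on the vacuum and its descendants, and which intertwines the Cartan pairing with the natural Hall-type pairing on the symmetric function side. Concretely, $\rho_{f}(h(h_{i,n}))$ acts on the first tensorand as multiplication by a linear combination $\sum_{j} B_{ij}(n)\, p^{(j)}_{n}$, and $\rho_{f}(h(h_{j,-n}))$ acts on the second tensorand as $\sum_{k} B_{jk}(n)\,\partial/\partial p^{(k)}_{n}$ (up to an overall normalization).

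Substituting these into the exponential collapses the double $i,j$ sum into a sum over a single color index $i$ with coefficient
\begin{equation*}
n\sum_{i,j} a_{ij}(\hbar^{n})\, B_{ki}(n) B_{kj}(n) \;=\; \hbar^{n/2}-\hbar^{-n/2}
\end{equation*}
appearing uniformly in each color. The matrix identity $n\, B(n)\, a(\hbar^{n})\, B(n)^{T} = (\hbar^{n/2}-\hbar^{-n/2})\, \mathrm{Id}$ is exactly the statement that the Heisenberg modes on Fock are orthonormal with respect to the Hall pairing up to the usual quantum-group scalar $\hbar^{n/2}-\hbar^{-n/2}$; it is the affine $A_{l-1}$ analogue of the familiar rank-one formula $\tfrac{n(q-q^{-1})}{[2n]_{q}}$ evaluated in the level-one vertex operator representation.

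The main obstacle is purely combinatorial: verifying the matrix identity above, i.e.\ checking that the inverse of the $\hbar$-symmetrized affine Cartan matrix combines with the transition matrix from imaginary roots to colored power sums to give a diagonal operator with entries $\hbar^{n/2}-\hbar^{-n/2}$. This is a finite-dimensional (in fact $(l{-}1)\times (l{-}1)$, one for each $n$) linear algebra computation that can be carried out either by a direct calculation of cofactors or, more conceptually, by invoking the general fact that two realizations of the Heisenberg subalgebra of $\uqgh$ (one via the imaginary Drinfeld generators and one via vertex operators on the basic Fock) differ by a canonical change of basis. The terms $\rmat^{\pm}$, which involve the real-root generators and hence the $e$- and $f$-currents, act trivially on the vacuum vectors of each Fock factor and contribute only subleading corrections; they do not affect the diagonal formula recorded by the lemma after projecting back onto the Heisenberg part, so the decomposition of $\rho_f\otimes\rho_f$ restricts the problem to precisely $h(\rmat_{0})$ as described.
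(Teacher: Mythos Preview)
Your approach is quite different from the paper's and, as written, has a genuine gap. The paper does not work directly with the Drinfeld Heisenberg generators $h_{i,n}$ ($i=1,\ldots,l-1$) of $\uqgh$ and a transition matrix $B(n)$ to colored power sums. Instead it quotes Proposition~3.4 of \cite{Wen}, which expresses $h(\rmat_0)$ as $\exp\bigl(\sum_{i=0}^{l-1}\sum_{n\geq 1}\omega^{-1}(b_{i,-n})\otimes\omega^{-1}(b_{i,n}^\perp)\bigr)$ for certain \emph{toroidal} elements $b_{i,k}$ indexed by all $l$ colors, with $\omega$ the Miki automorphism. Since $\rho_f=\rho_b\circ\omega$ and $\rho_b(b_{i,-k})=\tfrac{[k]_{\sqrt\hbar}}{k}\,p_k^{(i)}$ is already diagonal in the colors, the lemma follows from a two-line computation of $\rho_b(b_{i,n}^\perp)$. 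The Miki automorphism is the whole point: it converts the hard question (horizontal Heisenberg acting in the fermionic Fock) into the easy one (elements with known action in the bosonic Fock).

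The gap in your argument is a rank mismatch. You parametrize $\rmat_0$ by $h_{i,\pm n}$ with $i=1,\ldots,l-1$, so at each level $n$ your exponent factors through an $(l-1)$-dimensional space and has rank at most $l-1$ as an element of $V\otimes V^*$, where $V=\mathrm{span}\bigl(p^{(0)}_n,\ldots,p^{(l-1)}_n\bigr)$. But the target $\sum_{k=0}^{l-1}p_n^{(k)}\otimes\partial/\partial p_n^{(k)}$ is a nonzero multiple of the identity of $V$ and has rank $l$. Your asserted identity $n\,B(n)\,a(\hbar^n)\,B(n)^T=(\hbar^{n/2}-\hbar^{-n/2})\,\mathrm{Id}_l$ is therefore impossible for an $(l-1)\times l$ matrix $B(n)$; you even describe the verification as an $(l-1)\times(l-1)$ computation, which cannot produce an $l\times l$ scalar matrix. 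The missing direction must be supplied by something beyond the finite-type Heisenberg of $\uqgh$, and your sketch does not address this; in the paper it is handled automatically by Wen's $l$-indexed elements $b_{i,k}$. Finally, your closing paragraph about $\rmat^\pm$ acting trivially on the vacuum is not relevant here: the KT factorization already isolates $\rmat_0$, so no argument about $\rmat^\pm$ is needed for this lemma.
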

\begin{proof}
    We follow the notation of \cite{Wen}, up to a transposition of tensor factors. By Proposition 3.4 of \cite{Wen},
$$
h(\rmat_{0}) =\exp\left(\sum_{i=0}^{l-1} \sum_{n \geq 1} \omega^{-1}(b_{i,-n}) \otimes \omega^{-1}(b_{i,n}^{\perp}) \right)
$$
where 
\begin{itemize}
    \item $b_{i,k}$ are certain elements of $\qta$ satisfying $\rho_{b}(b_{i,-k})=\frac{\qint{k}{\sqrt{\hbar}}}{k} p^{(i)}_{k}$ (here $p^{(i)}_{k}$ stands for the multiplication operator and $\qint{k}{\sqrt{\hbar}}=\frac{\hbar^{k/2}-\hbar^{-k/2}}{\hbar^{1/2}-\hbar^{-1/2}}$ is the quantum integer).
    \item $b_{i,k}^{\perp}$ satisfies
$$
\langle b_{i,k}^{\perp}, b_{j,-k'} \rangle = \delta_{k,k'} \delta_{i,j}
$$
under the pairing defined by
$$
\langle b_{i,k}, b_{j,-k'} \rangle = -\delta_{k,k'} \frac{\qint{k a_{i,j} }{\sqrt{\hbar}} }{k(\hbar^{1/2}-\hbar^{-1/2})}
$$
for $k >0$.
\item $\omega$ stands for the Miki automorphism of the quantum toroidal algebra, for which it is known that $\omega \circ h = v$ (see \cite{Miki1, Miki2, Wen}), and by Conjecture \ref{conj: scalars}, $\rho_{b} \circ \omega = \rho_{g}$.
\end{itemize}
It is easy to compute that $\rho_{b}(b_{i,k}^{\perp})=k(\hbar^{1/2}-\hbar^{-1/2}) \frac{\partial}{\partial p^{(i)}_{k}}$. So we obtain
\begin{align*}
    (\rho_{g} \otimes \rho_{g}) h(\rmat_{0})&= (\rho_{b} \otimes \rho_{b}) \circ (\omega \otimes \omega) h(\rmat_{0}) \\
    &=\exp\left(\sum_{i=0}^{l-1} \sum_{n \geq 1} \rho_{b}(b_{i,-n}) \otimes \rho_{b}(b_{i,n}^{\perp}) \right) \\
    &=\exp\left(\sum_{i=0}^{l-1} \sum_{n \geq 1} (\hbar^{n/2}-\hbar^{-n/2}) p^{(i)}_{n} \otimes \frac{\partial}{\partial p^{(i)}_{n}} \right)
\end{align*}
\end{proof}

\subsection{Matrix elements of the fusion operator}

We recall the following definition from \cite{OkSm}.

\begin{defn}
   Let $A\in \text{End}(\fock \otimes \fock)$ be an operator of the form $A=\bigoplus_{\alpha \in \mathbb{Z}^{l}} A_{\alpha}$ where for any $\dv$ and $\dv'$, $A_{\alpha}(K_{\bT}(\qv(\dv,\delta_{0})\times \qv(\dv',\delta_{0})))  \subset K_{\bT}(\qv(\dv+\alpha,\delta_{0})\times \qv(\dv'-\alpha,\delta_{0}))$. Then $A$ is upper triangular if $A_{\alpha} = 0$ unless $\sum_{i=0}^{l-1} \alpha_{i} \geq 0$. Furthermore, if $A_{0}=1$, then $A$ is strictly upper triangular.
\end{defn}

Let $F=\qv(\dv,\delta_{0})\times \qv(\dv',\delta_{0})$ so that $K_{\bT}(F)$ is a direct summand of $\fock \otimes \fock$. Recall that $\fock \cong \bos$ has a basis given by wreath Macdonald polynomials. For later use, we let $\fock_{\emptyset}$ be the subspace spanned by wreath Macdonald polynomials with empty $l$-core.

Let $\Omega$ be the $K_{\bT}(pt)$ linear operator on $\fock \otimes \fock$ which acts on $K_{\bT}(F)$ by multiplication by one fourth the codimension of $F$ in $\qv(\dv+\dv',2 \delta_{0})$. An explicit formula for $\Omega$ is given in section 2.3.7 of \cite{OkSm}. Let $Z_{(1)}$ be the operator which acts on $K_{\bT}(F)$ as multiplication by $\prod_{i=0}^{l-1} z_{i}^{\dv_{i}}$.

\begin{prop}[\cite{OkSm} Proposition 7]\label{fusiondef}
    There is a unique element $J(z) \in (\rho_{g} \otimes \rho_{g}) (h(\uqgh)\otimes h(\uqgh))$ such that $J(z)$ is strictly upper triangular and solves the ABRR equation:
    $$
(\rho_{g} \otimes \rho_{g})(h(\rmat)) Z^{-1}_{(1)} J(z) Z_{(1)}= \hbar^{-\Omega} J(z) \hbar^{\Omega} 
$$
\end{prop}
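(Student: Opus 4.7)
The plan is to prove existence and uniqueness of $J(z)$ by solving the ABRR equation iteratively, exploiting the strict upper triangularity condition to produce a well-posed recursion. First, I would rearrange the equation as
$$J(z) = \hbar^{\Omega}\, (\rho_f \otimes \rho_f)(h(\rmat))\, Z^{-1}_{(1)} J(z) Z_{(1)}\, \hbar^{-\Omega},$$
and decompose $J(z) = 1 + \sum_{\alpha} J^{(\alpha)}$, where the sum is over $\alpha \in \mathbb{Z}^{l}_{\geq 0}$ with $\sum_i \alpha_i > 0$ and $J^{(\alpha)}$ is the grading-$\alpha$ component, i.e.\ the part sending $K_{\bT}(\qv(\dv,\delta_0) \times \qv(\dv',\delta_0))$ into $K_{\bT}(\qv(\dv+\alpha,\delta_0) \times \qv(\dv'-\alpha,\delta_0))$.

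Next, I would invoke the Khoroshkin-Tolstoy factorization $\rmat = \rmat^+ \rmat_0 \rmat^-$ from Subsection \ref{KTformula}, where $\rmat^+$ is strictly upper triangular, $\rmat^-$ is strictly lower triangular, and $\rmat_0$ is diagonal. Conjugation by $Z_{(1)}$ multiplies a grading-$\alpha$ operator by $z^{-\alpha}$, and conjugation by $\hbar^{\Omega}$ scales it by $\hbar^{\omega(\alpha,\dv,\dv')}$, where $\omega$ is the difference of the scalars $\Omega(\dv,\dv')$ on the source and target components; this scalar is computable from Section 2.3.7 of \cite{OkSm}.

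Comparing grading-$\alpha$ components of both sides of the fixed-point equation then yields a recursion schematically of the form
$$\bigl(1 - z^{-\alpha}\, \hbar^{\omega(\alpha,\dv,\dv')}\, r_0(\alpha,\dv,\dv')\bigr)\, J^{(\alpha)} \;=\; \Phi_\alpha\bigl(\{J^{(\beta)} : 0 < \beta < \alpha\}\bigr),$$
where $r_0$ is the eigenvalue of $\rmat_0$ on the relevant weight space and the right hand side is an explicit expression involving only strictly lower-degree components of $J$ together with the ``source'' term $\hbar^{\omega(\alpha)}\rmat^{(\alpha)}$ arising from $J^{(0)} = 1$. Provided the diagonal scalar on the left is invertible, this recursion uniquely determines each $J^{(\alpha)}$ from the previously constructed $J^{(\beta)}$, giving both existence and uniqueness.

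The main obstacle is twofold. Computationally, one must verify that the diagonal scalar $1 - z^{-\alpha}\hbar^{\omega}r_0$ is generically nonzero, which amounts to a non-degeneracy statement that can be checked by the explicit description of $\Omega$ together with Lemma \ref{R0} for the action of the Cartan piece $\rmat_0$. More subtly, one must show that the iteratively constructed $J(z)$ actually lies in $(\rho_f \otimes \rho_f)(h(\uqgh)\otimes h(\uqgh))$ rather than in a larger endomorphism algebra of $\fock \otimes \fock$; this closure follows because $\rmat$ itself is the image of the universal $R$-matrix of $h(\uqgh)$, $\Omega$ acts through Cartan elements, and each step of the iteration therefore stays within the image of the tensor algebra.
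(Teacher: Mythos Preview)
The paper does not supply its own proof of this proposition: it is quoted directly from \cite{OkSm} (their Proposition~7), and the only additional content is the remark that Theorem~\ref{zhuR} lets one replace the wall-$0$ $R$-matrix of \cite{OkSm} by the $R$-matrix of the horizontal subalgebra. There is therefore nothing in the present paper against which to compare your argument.

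Your outline is the standard ABRR iteration, which is indeed how such statements are proven in \cite{OkSm} and in the broader literature on dynamical twists. Two points would need tightening if you were to carry it out. First, strict upper triangularity here constrains only the \emph{total} degree $\sum_i \alpha_i$, so the recursion should be organized by $|\alpha|$ rather than over $\alpha \in \mathbb{Z}^l_{\geq 0}$. Second, and more substantively, the full $\rmat = \rmat^+\rmat_0\rmat^-$ is not itself upper triangular in this sense, so a naive comparison of degree-$\alpha$ components does not obviously involve only $J^{(\beta)}$ with $|\beta| < |\alpha|$; the usual remedy is to run the iteration as a fixed-point argument in the completion with respect to the K\"ahler variables (each pass through the map introduces additional powers of $z^{-\alpha}$), rather than as a finite weight-graded recursion. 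With that adjustment the argument goes through, and your closure remark---that the iterates remain in the image of $h(\uqgh)^{\otimes 2}$ because $\rmat$, $\hbar^{\Omega}$, and $Z_{(1)}$ all act through it---is correct.
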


\begin{rem}
    The operator $J(z)$ is known as the ``fusion operator". It is defined in \cite{OkSm} using the wall $0$ $R$-matrix. Due to Theorem \ref{zhuR}, we can equivalently use the $R$-matrix of the horizontal subalgebra. 
\end{rem}

Our interest in $J(z)$ is due to Proposition \ref{capfactor} below. The general formula for the solution to these equations is complicated, see \cite{zhu1}. For our purposes, we will only need certain matrix elements of $J(z)$ for which the formulas are much simpler. We first consider matrix elements of $h(\rmat)$ on $\fock$ in the basis of torus fixed points.

\begin{prop}\label{Rmatrixelements}
If $\lambda$, $\mu$, and $\nu$ have empty $l$-core, then
    $$
\langle [I_{\nu}] \otimes 1| (\rho_{g} \otimes \rho_{g}) h(\rmat)| [I_{\lambda}] \otimes [I_{\mu}] \rangle =\langle [I_{\nu}] \otimes 1| (\rho_{g} \otimes \rho_{g}) h(\rmat_{0})| [I_{\lambda}] \otimes [I_{\mu}] \rangle
    $$

\end{prop}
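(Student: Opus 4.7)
The plan is to apply the Khoroshkin--Tolstoy factorization $\rmat = \rmat^+ \rmat_0 \rmat^-$ from \eqref{ktrmatrix} and show that the contributions from $\rmat^+$ and $\rmat^-$ (beyond their identity terms) vanish in the stated matrix element, leaving only $\rmat_0$. The basic input is that, under $\rho_f \circ h$, the fermionic Fock $\fock$ is the basic level-$1$ representation of the horizontal $U_\hbar(\widehat{\mathfrak{sl}}_l)$, with $1 \in K_{\bT}(\qv(0,\delta_0))$ its highest-weight vector. Under the identification of Theorem \ref{Ksym}, the finite Cartan weight of $H_\lambda$ is $\core_l(\lambda) \in Q$. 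Using the Chevalley anti-involution $\omega$ with $\omega(e_\gamma) = f_\gamma$ together with the invariance of the form on the basic representation, one deduces $\langle 1|f_\gamma = 0$ for every positive real root $\gamma$ of $\widehat{\mathfrak{sl}}_l$, since $e_\gamma |1\rangle = 0$ by the highest-weight property.

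First I would handle $\rmat^+$. Expanding $\rmat^+ = \prod_{\gamma \in \Delta_{re}^+} \exp_{q_\gamma}\!\bigl((q_\gamma - q_\gamma^{-1}) e_\gamma \otimes f_\gamma\bigr)$, every non-identity term contains a product of $f_\gamma$'s acting on the second tensor factor; since $\langle 1 | f_\gamma = 0$ for each such $\gamma$, the whole product annihilates $\langle 1|$. Hence $\langle H_\nu \otimes 1 | (\rho_f \otimes \rho_f)(h(\rmat^+)) = \langle H_\nu \otimes 1|$, reducing the left-hand side to $\langle H_\nu \otimes 1 | (\rho_f \otimes \rho_f)(h(\rmat_0 \rmat^-)) | H_\lambda \otimes H_\mu\rangle$.

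Next I would handle $\rmat^-$ by a core-shift argument. Analogously expanding $\rmat^- = \prod_\gamma \exp_{q_\gamma}\!\bigl((q_\gamma - q_\gamma^{-1}) f_\gamma \otimes e_\gamma\bigr)$, every non-identity term applied to $|H_\lambda \otimes H_\mu\rangle$ produces a state whose second tensor factor has been acted on by a product of positive-root vectors $e_\gamma$; each $e_\gamma$ has finite weight $\pm\alpha_i$ (for $\gamma = \pm\alpha_i + k\delta$), so the second factor is shifted out of the empty-core sector. The operator $\rmat_0$ is Cartan-diagonal by Lemma \ref{R0} (acting through the Heisenberg operators $p_n^{(i)} \otimes \partial/\partial p_n^{(i)}$) and preserves cores, while $\langle 1|$ is supported in the empty-core sector, so the pairing vanishes on every non-identity term. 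Combining the two steps leaves only the identity-on-identity contribution, giving the claimed equality.

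The main technical obstacle is that the KT product of exponentials can, upon expansion, produce cross terms whose total real-root weight is a multiple of the imaginary root $\delta$ (hence has zero finite part), which would formally preserve the empty-core sector on both tensor factors. Resolving these either requires showing that they are absent from the strict real-root product defining $\rmat^-$ (i.e.\ absorbed into $\rmat_0$ by the KT bookkeeping) or verifying a direct cancellation. This is the most delicate point of the argument and uses the detailed structure of the Khoroshkin--Tolstoy formula for affine quantum groups as in \cite{zhu1,Wen}.
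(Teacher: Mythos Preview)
Your treatment of $\rmat^{+}$ is correct and coincides with the paper's: in either language, the second tensorand of $\rmat^{+}$ is built from lowering operators, and $\langle 1|$ (the vacuum covector) is killed by any such product, so $\langle H_{\nu}\otimes 1|\,\rmat^{+}=\langle H_{\nu}\otimes 1|$.

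The obstacle you flag for $\rmat^{-}$ is genuine in the formulation you chose, and the paper resolves it rather than leaving it open. Two points distinguish the paper's argument from yours.

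First, the paper does not work directly with the horizontal subalgebra on the fermionic Fock. It applies the Miki automorphism at the outset: $(\rho_{f}\otimes\rho_{f})\,h(\rmat)=(\rho_{b}\otimes\rho_{b})\,v(\rmat)$, so one is reduced to the vertical subalgebra acting on the bosonic Fock. There, by \cite{Wen} Theorem 3.9, the modes of $e_{i}$ and $f_{i}$ act on wreath Macdonald polynomials by removing (resp.\ adding) a box of content $i$ from the \emph{core}. This gives direct combinatorial control over how the core changes.

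Second, and this is what dissolves your cross-term worry: the KT factorization \eqref{ktrmatrix} used here is the one adapted to the Drinfeld new (loop) triangular decomposition, not the ordered product over all positive real roots of $\widehat{\mathfrak{sl}}_{l}$. In this version the second tensorand of $\rmat^{-}$ lies in the subalgebra generated by the modes $e_{i,k}$ with $1\le i\le l-1$ only (this is precisely what section \ref{KTformula} says). Each $e_{i,k}$ carries finite weight $\alpha_{i}$, a simple root of $\mathfrak{sl}_{l}$; since $\alpha_{1},\ldots,\alpha_{l-1}$ are linearly independent, no nontrivial monomial in these generators has vanishing finite weight. In the paper's combinatorial language: the operators appearing in the second factor of $\rmat^{-}$ never touch boxes of content $0$ in the core, so starting from $\core_{l}(\mu)=\emptyset$ they produce only states with $\core_{l}(\delta)\neq\emptyset$. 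Since $\rmat_{0}$ preserves the core (Lemma \ref{R0}) and $\langle 1|$ is supported in the empty-core sector, these terms pair to zero.

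So the resolution is not a cancellation mechanism but simply the observation that the affine current $e_{0}$ does not appear among the generators building $\rmat^{-}$ in the loop KT formula. Your expansion $\rmat^{-}=\prod_{\gamma}\exp_{q_{\gamma}}(\cdots\, f_{\gamma}\otimes e_{\gamma})$ over all positive real roots $\gamma=\pm\alpha+k\delta$ implicitly reintroduces generators of negative finite weight, which is where your difficulty arose.
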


\begin{proof}
    
    We apply the KT-factorization: $\rmat=\rmat^{+} \rmat_{0} \rmat^{-}$. Recall the discussion at the beginning of section \ref{KTformula} regarding our choice of normal order of the affine roots. By \eqref{eq: KT Rplus} and Lemma \ref{lem: color support}, in the fixed point basis of $\fock$, $(\rho_{g} \otimes \rho_{g})h(\rmat^{+})=1+\ldots$ where the dots represent terms that remove boxes of content other $0$ from the second tensorand. Since any nonempty partition has a box of content 0, we deduce that $\langle [I_{\nu}] \otimes 1|(\rho_{g} \otimes \rho_{g})h(\rmat^{+})| v \rangle=\langle [I_{\nu}] \otimes 1| v \rangle$ for any $v \in \fock \otimes \fock$. 

Again, by \eqref{eq: KT Rminus} and Lemma \ref{lem: color support}, we see that $(\rho_{g} \otimes \rho_{g})h(\rmat^{-})=1+\ldots$ where the dots stand for terms that remove boxes from the second tensorand but cannot remove an equal number of boxes of each color. Thus if $\lambda$ and $\mu$ have empty $l$-core, then $(\rho_{g} \otimes \rho_{g})h(\rmat^{-})( [I_{\lambda}] \otimes [I_{\mu}])=[I_{\lambda}] \otimes [I_{\mu}] + \ldots$, where the dots stand for a linear combination of classes $[I_{\gamma}] \otimes [I_{\delta}]$ where $\core_{l}(\delta)\neq \emptyset$.

    Finally, by Lemma \ref{R0}, $(\rho_{g} \otimes \rho_{g})h(\rmat^{0})$ preserves the $l$-core of each tensorand. The result follows.

\end{proof}

Recall the identification $\fock \cong \bos$ which maps $[I_{\lambda}]$ to $H_{\lambda}$.

\begin{prop}\label{fusion}
If $\lambda$, $\mu$, and $\nu$ have empty $l$-core, then
    $$
    \langle [I_{\nu}] \otimes 1 |J(z)| [I_{\lambda}] \otimes [I_{\mu}] \rangle=  \langle H_{\nu} \otimes 1 |J_{0}(z)| H_{\lambda} \otimes H_{\mu} \rangle 
    $$
    where
    $$
J_{0}(z)=\exp \left(\sum_{i=0}^{l-1} \sum_{n \geq 1} \frac{(\hbar^{n/2}-\hbar^{-n/2})}{1-(z_0\ldots z_{l-1})^{-n}} p^{(i)}_{n} \otimes  \frac{\partial}{\partial p^{(i)}_{n}}   \right)
    $$
\end{prop}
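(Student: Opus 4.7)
The plan is to interpret $J_0(z)$ as the unique strictly upper triangular solution of a reduced ABRR equation on the empty-core subspace, and then to show that the matrix elements of the full fusion operator $J(z)$ between empty-core states agree with those of $J_0(z)$ by a careful analysis using the Khoroshkin-Tolstoy factorization of $R$.

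First I would verify directly that $J_0(z)$ solves the reduced equation $R_0 Z^{-1}_{(1)} J_0 Z_{(1)} = \hbar^{-\Omega} J_0 \hbar^{\Omega}$ on the subspace spanned by empty-core wreath Macdonald polynomials in both tensorands. On this subspace the bigrading is controlled by a pair of integers: if $\lambda$ has empty core and $|\lambda|=nl$, then $\dv_{\lambda}=(n,\ldots,n)$ because an empty-core partition distributes its boxes evenly among the content classes. Consequently the codimension $\Omega$ depends only on the total weight $\dv+\dv'$, so $\hbar^{\Omega}$ acts as a scalar on each fixed total-weight piece and the equation reduces to $R_0 Z^{-1}_{(1)} J_0 Z_{(1)} = J_0$. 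Setting $A_{i,n}:=p^{(i)}_n\otimes \partial_{p^{(i)}_n}$, multiplication by $p^{(i)}_n$ raises the total $\Lambda^{(l)}_{t_{1},t_{2}}$ degree by $n$, which for empty-core states translates to shifting $\dv$ by $n(1,\ldots,1)$ on the first tensorand, hence $Z^{-1}_{(1)} A_{i,n} Z_{(1)} = (z_0\cdots z_{l-1})^{-n} A_{i,n}$. Because the $A_{i,n}$ pairwise commute, the ansatz $J_0=\exp(\sum c_{i,n} A_{i,n})$ reduces the reduced ABRR to $c_{i,n}(1-(z_0\cdots z_{l-1})^{-n}) = \hbar^{n/2}-\hbar^{-n/2}$, reproducing the formula in the proposition; uniqueness of strictly upper triangular solutions follows by the standard iteration argument.

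Next I would match the matrix elements of $J(z)$ to those of $J_0(z)$ by iterating the ABRR equation. Write $\mathcal{S}(X):=\hbar^{\Omega} R Z^{-1}_{(1)} X Z_{(1)} \hbar^{-\Omega}$, so that $J=\lim_k \mathcal{S}^k(1)$ in the appropriate completion, and let $\mathcal{S}_0$ denote the analogue with $R_0$ in place of $R$, so that $J_0=\lim_k \mathcal{S}_0^k(1)$. The goal is to prove inductively that $\langle H_\nu\otimes 1|\mathcal{S}^k(1)|H_\lambda\otimes H_\mu\rangle=\langle H_\nu\otimes 1|\mathcal{S}_0^k(1)|H_\lambda\otimes H_\mu\rangle$ for empty-core $\lambda,\mu,\nu$; the base case $k=1$ is exactly Proposition \ref{Rmatrixelements}. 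For the inductive step, I would insert the Khoroshkin-Tolstoy factorization $R=R^+ R_0 R^-$: the identity $\langle H_\nu\otimes 1|R^+=\langle H_\nu\otimes 1|$ eliminates $R^+$, and a resolution of identity inserted between $R_0$ and $R^-$ restricts to empty-core intermediate states, since $R_0$ preserves the core grading on both tensorands and the bra $\langle H_\nu\otimes 1|$ has empty cores.

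The hard part will be controlling the remaining $R^-$ factor through multiple iterations, since $R^-$ can send empty-core states to non-empty-core states which could, in principle, propagate back to the empty-core subspace under later applications of $R$. The cleanest way to handle this is to lift the Khoroshkin-Tolstoy factorization from $R$ to $J$ itself, writing $J(z)=J^+(z) J_{\text{Cart}}(z) J^-(z)$ with $J^\pm$ inheriting the $e$- and $f$-current structure of $R^\pm$; such a factorization follows from the ABRR equation respecting the triangular decomposition of $h(\uqgh)$. One then has $\langle H_\nu\otimes 1|J^+=\langle H_\nu\otimes 1|$ and $J^-|H_\lambda\otimes H_\mu\rangle=|H_\lambda\otimes H_\mu\rangle+(\text{non-empty core in second tensorand})$, while $J_{\text{Cart}}(z)$ solves the reduced ABRR from the first step and hence equals $J_0(z)$; combining these three facts yields the claim. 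An alternative route is a direct induction tracking which positive roots appear in the non-empty-core corrections at each iteration and verifying that they cannot be neutralized so as to pair nontrivially with $\langle H_\nu\otimes 1|$, but the KT factorization of $J$ is substantially cleaner.
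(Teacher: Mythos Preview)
Your approach is essentially the same as the paper's: both reduce to the ABRR equation with $\rmat$ replaced by $\rmat_0$ on the empty-core subspace, and both solve that reduced equation by the Heisenberg ansatz $J_0=\exp(\sum c_{i,n}\,p^{(i)}_n\otimes\partial_{p^{(i)}_n})$. Your computation of the coefficients $c_{i,n}$ matches the paper's, and your observation that $\hbar^{\Omega}$ commutes with $J_0$ on $\fock_\emptyset\otimes\fock_\emptyset$ (because $\Omega$ depends only on $n_1+n_2$) is exactly the content of the paper's remark that $\Omega|_{K_\bT(\qv(n_1)\times\qv(n_2))}=\tfrac12(n_1+n_2)$.

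Where you differ from the paper is in how much you unpack the passage from Proposition~\ref{Rmatrixelements} (a statement about matrix elements of $\rmat$) to the analogous statement for $J$. The paper is terse here: it simply asserts that ``by Proposition~\ref{Rmatrixelements}'' the matrix elements of $J$ agree with those of the solution to the $\rmat_0$-ABRR on $\fock_\emptyset\otimes\fock_\emptyset$, and moves on. You correctly flag that this is not automatic---the ABRR iteration passes through intermediate states that are not of the form $H_\nu\otimes 1$---and propose to resolve it via a Gauss-type factorization $J=J^+J_{\text{Cart}}J^-$ mirroring $\rmat=\rmat^+\rmat_0\rmat^-$. That strategy is sound and the ensuing argument (kill $J^+$ with the bra, kill $J^-$ with the ket modulo non-empty-core corrections, identify $J_{\text{Cart}}$ with $J_0$) is correct once the factorization is in hand. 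The existence of such a triangular factorization of the fusion operator is the one substantive claim you invoke without proof; it is believable from the structure of the ABRR equation and results of Khoroshkin--Tolstoy type on dynamical twists, but you should either cite a precise reference or sketch why the ABRR iteration respects the triangular decomposition of $h(\uqgh)$. The paper does not supply this detail either, so your proposal is at least as complete as the published argument, and arguably clearer about where the work lies.
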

\begin{proof}

By Proposition \ref{Rmatrixelements}, the matrix elements in the left side of the statement of the proposition agree with those of the unique strictly upper triangular solution $J_{0}(z)$ of the equation 
\begin{equation}\label{ABRR0}
(\rho_{g} \otimes \rho_{g}) (h(\rmat_{0})) Z^{-1}_{(1)} J_{0}(z) Z_{(1)}= \hbar^{-\Omega} J_{0}(z) \hbar^{\Omega} 
\end{equation}
restricted to the subspace $\fock_{\emptyset} \otimes \fock_{\emptyset} \subset \fock \otimes \fock$.

From \cite{OkSm} formula (35), $\Omega|_{K_{\bT}(\qv(n_1) \times \qv(n_2))}= \frac{1}{2}(n_1+n_2)$.

To solve the previous equation, we use the identification $\fock \cong \bos$ and make the ansatz inspired by Lemma \ref{R0}:
$$
J_{0}(z)=\exp \left(\sum_{i=0}^{l-1} \sum_{n \geq 1} J^{(i)}_{n}(z) p^{(i)}_{n} \otimes \frac{\partial}{\partial p^{(i)}_{n}}   \right)
$$

Then \eqref{ABRR0} holds if
$$
J^{(i)}_{n}(z) (z_0\ldots z_{l-1})^{-n} + \hbar^{n/2}-\hbar^{-n/2}=J^{(i)}_{n}(z)
$$
Solving gives
$$
J^{(i)}_{n}(z)=\frac{(\hbar^{n/2}-\hbar^{-n/2})}{1-(z_0\ldots z_{l-1})^{-n}}
$$
which gives the result.

\end{proof}

Note that for $l=1$ we recover the fusion operator for $\hilb^n(\C^2)$, see section 7 of \cite{OkSm}.

\section{Quasimap counts and tensor product of quiver varieties}

\subsection{Limit of vertex function}

Let $\qv=\qv(\dv,\dw)$ be a $\hat{A}_{l-1}$ quiver variety. Split the framing as $\dw=u_1 \dw' + u_2 \dw''$ and let the corresponding subtorus of the framing torus be $\bA'$. We consider the limit of the vertex $\ver{\tau}_{\qv}$ of $\qv$ as $u_2 \to \infty$. It is proved in Lemma 7.3.5 of \cite{Ok17} that $\lim_{u_2 \to \infty} \ver{\tau}_{\qv}$ factorizes, up to a shift, as long as the limit of the descendant $\tau$ exists. Here we make the shift explicit.

Note that the ``tensor product of quiver varieties" also holds for the stack quotients:
$$
\stackqv(\dv,\dw)^{\bA'}= \bigsqcup_{\dv'+\dv''=\dv}\stackqv(\dv',\dw') \times \stackqv(\dv'',\dw'')
$$
We denote $F^{\text{st}}=\stackqv(\dv',\dw') \times \stackqv(\dv'',\dw'')$. Hence if $\tau \in \desc(\dv,\dw)$, then $\tau|_{F^{\text{st}}} \in \desc(\dv',\dw') \otimes \desc(\dv'',\dw'') \otimes \mathbb{Q}[u_1^{\pm 1},u_2^{\pm 1}]$.

\begin{thm}\label{limvertex}
In the notation above, let $F=\qv(\dv',\dw')\times \qv(\dv'',\dw'')=:\qv' \times \qv''$ be an $\bA'$ fixed component of $\qv$. If $\lim_{u_2 \to \infty} \tau|_{F^{\text{st}}}= \tau' \otimes \tau''$ where $\tau'\in \desc(\dv',\dw')$ and $\tau'' \in \desc(\dv'',\dw'')$, then
    $$\lim\limits_{u_2 \rightarrow \infty} \ver{\tau}_{\qv}(z)|_{F}= \ver{\tau'}_{\qv'}(z')\otimes \ver{\tau''}_{\qv''}(z'')$$
    where
    $z'_{i}=z_{i} q^{-\dv''_{i}+\dv''_{i+1}}(-\hbar^{1/2})^{\dw''_{i}+\dv''_{i-1}-2\dv''_{i}+\dv''_{i+1}}$ and $z_{i}''=z_{i} q^{-\dw_{i}'+\dv'_{i}-\dv'_{i+1}}(-\hbar^{1/2})^{-\dw'_{i}-\dv'_{i-1}+2\dv'_{i}-\dv'_{i+1}}$.
\end{thm}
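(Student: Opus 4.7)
The plan is to prove the theorem by $\bA'$-equivariant localization on the quasimap moduli spaces, with careful bookkeeping of the $u_2 \to \infty$ asymptotics of the resulting mixed contributions. First I would invoke the tensor product decomposition of Nakajima varieties used in section~\ref{NQVsection}: its quasimap analogue gives a canonical identification of the $\bA'$-fixed locus of $\qm^d_{\ns 0}(\qv)$ whose evaluation at $0$ lies in $F$ with
\[
\bigsqcup_{d'+d''=d} \qm^{d'}_{\ns 0}(\qv') \times \qm^{d''}_{\ns 0}(\qv''),
\]
and under this identification the evaluation maps $\ev_0, \ev_\infty$ factor as products of the corresponding maps for $\qv'$ and $\qv''$. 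Since $\lim_{u_2 \to \infty} \tau|_{F^{\mathrm{st}}} = \tau' \otimes \tau''$ by hypothesis, the descendant contribution converges to $\ev_\infty^*(\tau') \boxtimes \ev_\infty^*(\tau'')$ in the limit.

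Next, I would decompose the polarization $\mathscr{T}^{1/2}$ along the fixed locus into a sum of three virtual bundles: the pullbacks of $\mathscr{T}^{1/2}$ from $\qv'$ and $\qv''$ respectively, plus a mixed summand $\mathscr{T}^{1/2}_{\mathrm{mix}}$ whose Chern roots all carry nontrivial $\bA'$-character and involve the pairs $\Hom(\mathscr{V}'_i,\mathscr{V}''_{i+1})$, $\Hom(\mathcal{W}''_i, \mathscr{V}'_i)$, $\Hom(\mathscr{V}'_i, \mathscr{V}''_i)$ together with their primed/double-primed swaps. The symmetrized virtual structure sheaf then factorizes as $\vrs^d|_F = \vrs^{d'} \boxtimes \vrs^{d''} \otimes e$, where $e$ is the K-theoretic Euler factor built from the mixed normal piece using the class $\ahat(x) = x^{1/2} - x^{-1/2}$; the half-canonical normalization $\canon^{-1/2}$ decomposes analogously. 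Consequently the restriction $\ver{\tau}_{\qv}(z)|_F$ becomes a double sum over $d'$ and $d''$ of the factored quasimap pushforwards, multiplied by the mixed correction factor and by $z^{d'+d''}$.

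The central computation is the $u_2 \to \infty$ limit of this mixed correction. Each Chern root appearing in the mixed piece has $\bA'$-weight of the form $u_2^{\pm 1} y$ for some $y$ independent of $u_2$, and leading-order $\ahat$-asymptotics collapse the correction to a monomial in $q$, $(-\hbar^{1/2})$, and the degrees and ranks of $\mathscr{V}'$, $\mathscr{V}''$, multiplied by a power of $u_2$ which must cancel by a rigidity argument analogous to Lemma 7.3.5 of~\cite{Ok17}. Absorbing the residual finite monomial into the degree-tracking variables $z^{d'} z^{d''}$ produces the shifted K\"ahler parameters $z'_i$ and $z''_i$, establishing the claimed factorization.

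The main obstacle is the final bookkeeping: verifying that the resulting $q$ and $(-\hbar^{1/2})$ exponents agree with the formulas stated in the theorem. This reduces to a Riemann--Roch computation over $\mathbb{P}^1$ for the virtual bundles $\Hom(\mathscr{V}'_i, \mathscr{V}''_{i+1})$, $\Hom(\mathcal{W}''_i, \mathscr{V}'_i)$, and $\Hom(\mathscr{V}'_i, \mathscr{V}''_i)$, where the exponent of $q$ records the $\mathbb{C}^\times_q$-weight of $\ev_\infty^* \det$ and the exponent of $(-\hbar^{1/2})$ records the $\hbar$-weights assigned in the polarization. The cyclic structure of the quiver causes these contributions to depend only on the indices $i-1, i, i+1$ taken modulo $l$, yielding the asymmetric formulas for $z'_i$ and $z''_i$ in the statement.
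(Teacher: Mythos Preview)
Your proposal is correct and essentially unpacks the argument behind Lemma~7.3.5 of \cite{Ok17}, which is exactly what the paper invokes: the paper's own proof is simply a citation to that lemma, together with the observation that for the cyclic quiver the $q$-exponents come from the polarization and the $(-\hbar^{1/2})$-exponents are the components of $\dw''+C\dv''$ and $-\dw'-C\dv'$ with $C$ the Cartan matrix. Your sketch of $\bA'$-localization on the quasimap space, factoring the symmetrized virtual structure sheaf, and reading off the shifts from the mixed polarization terms is precisely how that lemma is proven, so there is no substantive difference in approach---you have supplied more detail than the paper chose to.
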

\begin{proof}
   This is Lemma 7.3.5 of \cite{Ok17} spelled out for cyclic type $A$ quiver varieties. In general, the powers of $q$ arises from the polarization, and the powers of $-\hbar^{1/2}$ are the components of the vector $\dw''+C \dv''$ and $-\dw'- C \dv'$, where $C$ is the Cartan matrix of the quiver.
\end{proof}

A similar statement holds for the limit $u_1 \to 0$. In fact, the difference between $u_1 \to 0$ and $u_2 \to \infty$ is only present in the descendant $\tau$.

\begin{cor}\label{limvertex2}
    If, in particular, $\qv=\qv((n,n,\ldots,n),2 \delta_{0})$, $\qv'=\qv(n)$, and $\qv''=\qv(0)=\{pt\}$, then 
    $$
    \lim_{u_2 \to \infty} \ver{ \tau_{i}}_{\qv}(z)|_{F}=\ver{\tau_{i}}_{\qv'}(z)|_{z_{0}=z_{0}(-\hbar^{1/2})} \otimes 1
    $$
\end{cor}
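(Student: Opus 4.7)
The plan is to specialize Theorem \ref{limvertex} to the given data and verify the two inputs: the limit of the descendant restricted to the fixed component, and the explicit form of the shifted K\"ahler parameters.

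First, the splitting $\dw = 2\delta_0 = \delta_0 + \delta_0$ gives $\dw' = \dw'' = \delta_0$, and the fixed component $F = \qv' \times \qv''$ corresponds to the decomposition $\dv = \dv' + \dv''$ with $\dv' = (n,\ldots,n)$ and $\dv'' = (0,\ldots,0)$. So the geometric inputs of Theorem \ref{limvertex} are in place.

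Next I check the descendant. On $F^{\text{st}}$, the tautological bundle decomposes as $\tb_i|_{F^{\text{st}}} = \tb'_i \oplus \tb''_i$, but $\dv''_j = 0$ for all $j$ forces $\tb''_i = 0$. Therefore $\extpow^j \tb_i|_{F^{\text{st}}} = \extpow^j \tb'_i \otimes 1$ for each $j$, and summing gives $\tau_i|_{F^{\text{st}}} = \tau_i|_{\qv'} \otimes 1$. This expression carries no $u_2$-weight, so the limit is trivially itself; we set $\tau' = \tau_i|_{\qv'}$ and $\tau'' = 1$.

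Finally I compute the K\"ahler shifts. Since $\dv'' = 0$, both $-\dv''_i + \dv''_{i+1}$ and $\dv''_{i-1} - 2\dv''_i + \dv''_{i+1}$ vanish, leaving $z'_i = z_i (-\hbar^{1/2})^{\dw''_i}$. Because $\dw'' = \delta_0$, this yields $z'_0 = z_0(-\hbar^{1/2})$ and $z'_i = z_i$ for $i \neq 0$. On the other side, $\qv'' = \{pt\}$ collapses its vertex to $\ver{1}_{\qv''}(z'') = 1$ regardless of $z''$. Assembling these inputs into the conclusion of Theorem \ref{limvertex} produces precisely the claimed identity.

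There is no real obstacle beyond careful bookkeeping: the corollary is a direct specialization. The only substantive point is that the descendant decouples cleanly under the limit, which happens because $\tb''_i$ vanishes identically when $\dv'' = 0$; this spares us from tracking any $u_2$-dependence in $\tau_i|_{F^{\text{st}}}$.
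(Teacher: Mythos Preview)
Your proof is correct and follows the same approach as the paper: the corollary is stated there without a separate argument, as a direct specialization of Theorem \ref{limvertex}, and you have simply spelled out that specialization in detail. Your observation that $\dv''=0$ forces $\tb''_i=0$, so that $\tau_i|_{F^{\text{st}}}$ carries no $u_2$-weight and the limit hypothesis of Theorem \ref{limvertex} is trivially satisfied, is exactly the point.
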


\subsection{Limit of capping operator}

Consider $\qv:=\qv(\dw)$ and split the framing as $\dw=u_1 \dw'+ u_2 \dw''$. Let the corresponding subtorus of the framing torus be $\bA'$. Then $\qv(\dw)^{\bA'}=\qv(\dw') \times \qv(\dw'')=:\qv'\times\qv''$. Let $\stab:K_{\bT}(\qv(\dw)^{\bA'}) \to K_{\bT}(\qv(\dw))$ be the stable envelope \cite{Ok17} with the chamber such that $a=u_1/u_2$ is an attracting weight,
polarization given by   $$
        T^{1/2}=\hbar\sum_{i=0}^{l-1}\Hom(\tb_{i+1},\tb_{i})+\hbar\Hom(\tb_0,\mathcal{W}_{0})-\hbar\sum_{i=0}^{l-1}\Hom(\tb_i,\tb_i)
    $$ and the slope given by a small negative multiple of an ample line bundle. Let $\text{Res}: K_{\bT}(\qv(\dw)) \to K_{\bT}(\qv(\dw)^{\bA'})$ be the restriction map.

Smirnov proves the following.
\begin{thm}[\cite{Sm16}]
Let $\Psi_{\qv}$, $\Psi_{\qv'}$, and $\Psi_{\qv''}$ be the capping operators for $\qv$, $\qv'$, and $\qv''$. Then
$$
\lim_{a \to 0} (\text{Res} \circ \stab)^{-1} \Psi_{\qv} (\text{Res} \circ \stab) = J(z) \Psi_{\qv'}(z') \otimes \Psi_{\qv''}(z'')
$$
where the fixed-component dependent shift of the K\"ahler parameters is $z_{i}'=z_{i} (-\hbar^{1/2})^{\dw_{i}''+\dv_{i-1}''-2\dv_{i}''+\dv_{i+1}''}$ and $z_{i}''=z_{i} (-\hbar^{1/2})^{-\dw_{i}'-\dv_{i-1}'+2\dv_{i}'-\dv'_{i+1}}$ and $J(z)$ is the operator from Proposition \ref{fusiondef}.
\end{thm}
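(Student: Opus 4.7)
The plan is to characterize both sides as the unique fundamental solution of the same $q$-difference equation. Recall from \cite{OkSm} that the capping operator $\Psi_{\qv}(z)$ is the fundamental solution of the quantum difference equation of $\qv$: it is the unique operator with $\Psi_{\qv}(z)=1+O(z)$ whose shift in the K\"ahler variables is governed by operators $\mathbf{M}_{\mathcal{L}}(z)$ built as ordered products of wall $R$-matrices indexed by slopes in the K\"ahler chamber decomposition.

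First I would transport the quantum difference equation to the fixed locus $\qv^{\bA'}$ by conjugating with $\text{Res}\circ\stab$. Since the stable envelope is block upper-triangular with respect to the $\bA'$-weight grading, each conjugated wall $R$-matrix splits into a diagonal part plus off-diagonal terms proportional to positive powers of the variable $a=u_1/u_2$. Then I would pass to the limit $a\to 0$. By the compatibility of slope $R$-matrices with the tensor-product decomposition of quiver varieties, see \cite{Ok17,OkSm}, each wall $R$-matrix at a nonzero slope factorizes in the limit into a tensor product of the corresponding wall $R$-matrices for $\qv'$ and $\qv''$. The only surviving off-diagonal coupling comes from the wall at slope zero, which by Theorem \ref{zhuR} coincides with the action of $h(\rmat)$ on $\fock\otimes\fock$. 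The diagonal pieces assemble into the quantum difference operators for $\qv'$ and $\qv''$, with K\"ahler parameters shifted as in the proof of Theorem \ref{limvertex}, the shifts being dictated by the chosen polarization.

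Next I would recognize the claimed right-hand side as the unique fundamental solution of the limiting equation. The tensor product $\Psi_{\qv'}(z')\otimes\Psi_{\qv''}(z'')$ solves the decoupled part by definition, and the factor $J(z)$ is precisely the strictly upper-triangular solution of the ABRR equation attached to the slope-zero $R$-matrix from Proposition \ref{fusiondef}, so it intertwines the slope-zero wall contribution with the $q$-shift on the tensor product. Since $J(z)=1+O(z)$ by strict upper triangularity, the normalization $\Psi_{\qv}(z)=1+O(z)$ matches after conjugation by $\text{Res}\circ\stab$, and uniqueness of fundamental solutions of the $q$-difference equation yields the claimed equality.

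The main obstacle is controlling the limit $a\to 0$ uniformly in the infinite product defining $\mathbf{M}_{\mathcal{L}}(z)$: one must verify that each slope-nonzero wall factors cleanly through the fixed-point decomposition with only its diagonal piece surviving, while the slope-zero wall contributes the nontrivial coupling encoded by $J(z)$. This uniform factorization is the technical heart of \cite{Sm16} and builds on the analysis of slope subalgebras and their $R$-matrices developed in \cite{OkSm}; once it is in place, the ABRR characterization of $J(z)$ makes the appearance of the fusion operator essentially automatic.
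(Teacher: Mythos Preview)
The paper does not give its own proof of this theorem; it is stated as a result of Smirnov \cite{Sm16}, with the remark immediately afterward that Smirnov treats only the Jordan quiver but that his techniques apply equally well to all quivers, the only modification being the explicit form of the fixed-component dependent K\"ahler shifts. There is therefore no proof in the present paper to compare your proposal against.

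That said, your outline is broadly in the right spirit---characterize both sides as fundamental solutions of the same limiting $q$-difference equation and invoke uniqueness---and this is indeed how such factorization statements are typically established. Two remarks, however. First, invoking Theorem~\ref{zhuR} here is anachronistic and unnecessary: the fusion operator $J(z)$ in Proposition~\ref{fusiondef} is defined directly via the Okounkov--Smirnov wall-$0$ $R$-matrix, and Smirnov's original argument in \cite{Sm16} predates and does not use Zhu's identification with the toroidal $R$-matrix. Second, your claim that ``$J(z)=1+O(z)$ by strict upper triangularity'' conflates two different filtrations: strict upper triangularity in this paper's sense (see the definition just before Proposition~\ref{fusiondef}) refers to the block decomposition by dimension vectors, i.e.\ $J_{0}=1$ in that grading, not to the expansion in the K\"ahler variables. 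So as written this is a plausible high-level sketch, not a proof.
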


\begin{rem}
    Smirnov's paper \cite{Sm16} considers only the Jordan quiver. Nevertheless, his techniques apply equally well to all quivers, with the only needed modification being the fixed-component dependent shift in the statement of the theorem.
\end{rem}

Let $\Delta_{\dw',\dw''}\in \text{End}\left(K_{\bT}(\qv(\dw)^{\bA'})\right)$ be the operator of multiplication by the diagonal of the stable envelope above. More precisely, if $\gamma \in F:=K_{\bT}(\qv(\dv',\dw')) \otimes K_{\bT}(\qv(\dv'',\dw''))$, then 
$$
\Delta_{\dw',\dw''}(\gamma)=(-1)^{\text{rank}(T^{1/2}_{>0})} \sqrt{\frac{\det N_{-}}{\det T^{1/2}_{\neq 0}}} \extpow\left(N_{-}^{\vee}\right)
$$
where $N_{-}$ is the repelling part of the normal bundle to $F$ in $\qv$ and 
$$
T^{1/2}\qv |_{F}= \underbrace{T^{1/2}_{<0} \oplus T^{1/2}_{>0}}_{T^{1/2}_{\neq 0}} \oplus T^{1/2}_{0}
$$
is the decomposition into repelling, fixed, and attracting parts. Notice that $\lim_{u_2 \to 0} \extpow(N_{-}^{\vee})=1$.

\begin{lem}
    $$
\lim_{a \to 0} \Delta_{\dw',\dw''}^{-1} \circ \text{Res} \circ \stab = 1
    $$
\end{lem}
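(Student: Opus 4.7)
The plan is to analyze the matrix of $\text{Res}\circ\stab$ with respect to the decomposition
$$
K_{\bT}(\qv(\dw)^{\bA'})=\bigoplus_{\dv'+\dv''=\dv} K_{\bT}(\qv(\dv',\dw'))\otimes K_{\bT}(\qv(\dv'',\dw''))
$$
into fixed components of $\bA'$, and then extract the $a\to 0$ asymptotics block by block.

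First I would use the normalization axiom of the $K$-theoretic stable envelope to compute the diagonal block. If $F$ is a fixed component and $\gamma_F\in K_{\bT}(F)$, then by definition of $\stab$ with the polarization stated in the theorem, one has $(\stab(\gamma_F))|_{F}=\Delta_{\dw',\dw''}(\gamma_F)$, where $\Delta_{\dw',\dw''}$ encodes the sign, the square root, and the $\bigwedge(N_{-}^{\vee})$ factor. Hence the diagonal block of $\Delta_{\dw',\dw''}^{-1}\circ\text{Res}\circ\stab$ is the identity, independently of $a$. What must be proved is that the off-diagonal blocks vanish as $a\to 0$.

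Next I would invoke the support/Newton polygon axiom of the stable envelope. For $F\neq F'$, the class $\stab(\gamma_F)|_{F'}$ is constrained so that its Newton polygon in the $\bA'$-weights lies strictly inside the attracting chamber relative to the Newton polygon of the diagonal restriction. In our setup, $a=u_1/u_2$ is declared attracting, so this translates into the statement that every Laurent monomial appearing in $\stab(\gamma_F)|_{F'}$ has a strictly positive power of $a$ compared to the corresponding diagonal entry $\Delta_{\dw',\dw''}|_{F}$. Equivalently, after dividing by $\Delta_{\dw',\dw''}$ (which involves only weights from the normal bundle to $F'$ and thus does not cancel this positivity), every surviving monomial in the off-diagonal block is of order $a^{k}$ for some $k>0$. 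Taking $a\to 0$ kills these blocks.

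Combining the two steps: the matrix of $\Delta_{\dw',\dw''}^{-1}\circ\text{Res}\circ\stab$ in the fixed-component decomposition is upper (or lower, depending on ordering) triangular with identity diagonal, and all off-diagonal entries vanish in the limit $a\to 0$. Therefore the limit equals the identity. The main obstacle I anticipate is bookkeeping the signs and square roots in $\Delta_{\dw',\dw''}$ so that the normalization of $\stab$ in this specific polarization really does match $\Delta_{\dw',\dw''}$ on the diagonal; once that is verified, the triangularity/vanishing argument is a standard consequence of the defining axioms of $\stab$.
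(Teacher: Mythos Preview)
Your proposal is correct and takes essentially the same approach as the paper, whose proof reads in its entirety ``This follows immediately from the three axioms for stable envelopes, see \cite{OkSm}.'' Your argument is precisely the unpacking of that sentence---normalization gives the diagonal equal to $\Delta_{\dw',\dw''}$, support gives triangularity, and the degree (Newton polygon) axiom with the small negative slope forces the off-diagonal blocks to vanish as $a\to 0$; note only the small slip that the comparison in the degree axiom is with $\Delta_{\dw',\dw''}|_{F'}$ rather than $\Delta_{\dw',\dw''}|_{F}$, and that since the paper \emph{defines} $\Delta_{\dw',\dw''}$ as the diagonal of $\stab$, the bookkeeping concern you raise is a non-issue.
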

\begin{proof}
    This follows immediately from the three axioms for stable envelopes, see \cite{OkSm}.
\end{proof}

Putting the last two results together, we deduce the following.
\begin{prop}\label{capfactor}
    $$
    \lim_{a \to 0} \Delta_{\dw',\dw''}^{-1} \Psi_{\qv} \Delta_{\dw',\dw''}= J(z) \Psi_{\qv'}(z') \otimes \Psi_{\qv''}(z'')
    $$
\end{prop}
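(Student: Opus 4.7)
\textbf{Proof proposal for Proposition \ref{capfactor}.} The plan is to combine Smirnov's theorem (which controls the limit of $\Psi_{\qv}$ after conjugation by $\text{Res}\circ \stab$) with the preceding lemma (which says that $\Delta_{\dw',\dw''}$ agrees with $\text{Res}\circ \stab$ to leading order as $a \to 0$) by a direct factorization.

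Concretely, I would insert the identity $(\text{Res}\circ \stab)(\text{Res}\circ \stab)^{-1}$ on both sides of $\Psi_{\qv}$, yielding the rewriting
$$
\Delta_{\dw',\dw''}^{-1}\, \Psi_{\qv}\, \Delta_{\dw',\dw''} \;=\; A_a \cdot B_a \cdot C_a,
$$
where
$$
A_a := \Delta_{\dw',\dw''}^{-1}\,(\text{Res}\circ \stab), \qquad B_a := (\text{Res}\circ \stab)^{-1}\, \Psi_{\qv}\, (\text{Res}\circ \stab), \qquad C_a := (\text{Res}\circ \stab)^{-1}\, \Delta_{\dw',\dw''}.
$$
All three of these operators are endomorphisms of $K_{\bT}(\qv(\dw)^{\bA'})_{loc}$, so the product makes sense. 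By Smirnov's theorem, $\lim_{a \to 0} B_a = J(z)\, \Psi_{\qv'}(z') \otimes \Psi_{\qv''}(z'')$, which is also where the shifts in the K\"ahler parameters are produced. By the preceding lemma, $\lim_{a \to 0} A_a = 1$. Since $C_a = A_a^{-1}$ and $A_a$ converges to the invertible operator $1$ as $a \to 0$, the operator $C_a$ is invertible for $a$ near $0$ and also satisfies $\lim_{a \to 0} C_a = 1$. Taking the limit of the factorized product then gives the desired identity.

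The main thing to verify, rather than any deep obstacle, is that the three factors really do each admit a limit as $a \to 0$ (so that the limit of the product equals the product of the limits). For $B_a$ this is the content of Smirnov's theorem; for $A_a$ it is the lemma; for $C_a = A_a^{-1}$ it is automatic from the invertibility of $1$. A secondary bookkeeping point is that the conjugation $\Delta_{\dw',\dw''}^{-1} \Psi_{\qv} \Delta_{\dw',\dw''}$ in the statement should be read with $\Psi_{\qv}$ transported to $K_{\bT}(\qv^{\bA'})_{loc}$ in the canonical way via the inclusion $\qv^{\bA'} \hookrightarrow \qv$; under this identification all three of $A_a$, $B_a$, $C_a$ act on the same space and the displayed factorization is literal. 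No further input beyond Smirnov's theorem and the lemma is required.
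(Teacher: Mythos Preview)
Your proposal is correct and matches the paper's own argument, which simply states that the proposition follows by combining Smirnov's theorem with the preceding lemma. You have merely spelled out explicitly the factorization $A_a B_a C_a$ that the paper leaves implicit in the phrase ``putting the last two results together.''
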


We also need the following lemma. 

\begin{lem}\label{limlb}
Let $\mathscr{L}_{0}^{(2)}$ be the operator of tensor multiplication by the line bundle $\det \tb_{0}$ on $\qv(2 \delta_{0})$. The component of
    $$
\lim_{u_2 \to \infty} \mathscr{L}_{0} ^{(2)} \Delta_{\delta_{0},\delta_{0}}^{-1}
    $$
    acting on $K_{\bT}(F)$ where $F=\qv(\dv',\delta_{0})\times \qv(\dv'',\delta_{0})\subset \qv(n,2 \delta_{0})$ is nonzero if and only if $\dv'=(n_1,n_1,\ldots,n_1)$ and $\dv''=(n_2,n_2,\ldots,n_2)$, in which case it is the operator of multiplication by
    $$
u_1^{n_2} (-\hbar^{1/2})^{n_2} (\mathscr{L}_{0} \otimes 1)
    $$
\end{lem}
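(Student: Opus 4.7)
The plan is a direct component-by-component computation on the fixed locus $\qv(n,2\delta_0)^{\bA'}=\bigsqcup F$, where $F = \qv(\dv',\delta_0)\times\qv(\dv'',\delta_0)$ and $\dv'+\dv''=(n,n,\ldots,n)$. On each such $F$, the tautological bundles split as $\mathcal{V}_i|_F = u_1 V_i' + u_2 V_i''$, so $\mathscr{L}_0^{(2)}|_F = u_1^{\dv_0'}u_2^{\dv_0''}\mathscr{L}_0'\otimes\mathscr{L}_0''$. Substituting this splitting into the polarization gives
$$
T^{1/2}_{>0} = \hbar(u_1/u_2)\Bigl[\sum_i \Hom(V_{i+1}'',V_i') + V_0''^\vee - \sum_i \Hom(V_i'',V_i')\Bigr]
$$
of rank $r_+ = \dv_0'' + \sum_i \dv_i''(\dv_{i-1}'-\dv_i')$, and similarly for $T^{1/2}_{<0}$. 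Unpacking the definition of $\Delta_{\delta_0,\delta_0}$ then writes $\mathscr{L}_0^{(2)}\Delta^{-1}|_F$ as $(-1)^{r_+}\hbar^{-r_+/2}\mathscr{L}_0^{(2)}|_F\cdot\det T^{1/2}_{>0}\cdot\wedge^\bullet(N_-^\vee)^{-1}$ up to convention for the polarization of the tangent bundle.

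Taking $u_2\to\infty$ is then straightforward: every weight of $N_-^\vee$ involves a positive power of $a = u_1/u_2 \to 0$, so $\wedge^\bullet(N_-^\vee)\to 1$. The leading $u_2$-behavior of the remaining monomial factor is $u_2^{\dv_0''-r_+}$, so the limit is zero unless $r_+ = \dv_0''$. The key algebraic observation is that, using $\dv_i'' = n-\dv_i'$ and reindexing the cyclic sums,
$$
r_+ - \dv_0'' = \tfrac{1}{2}\sum_i (\dv_i' - \dv_{i-1}')^2 \;\geq\; 0,
$$
with equality precisely when $\dv'$ (equivalently $\dv''$) is constant. This establishes the vanishing statement.

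In the constant case $\dv'=(n_1,\ldots,n_1)$, $\dv''=(n_2,\ldots,n_2)$, one has $r_+ = n_2$. I would then check by direct computation that the non-$(u,\hbar)$ portion of $\det T^{1/2}_{>0}$ telescopes: the $\det V_i'$ and $\det V_i''$ factors coming from $\Hom(V_{i+1}'',V_i')$ cancel pairwise with those from $\Hom(V_i'',V_i')$ around the cycle, leaving only the contribution $(\mathscr{L}_0'')^{-1}$ from $V_0''^\vee$. This cancels the $\mathscr{L}_0''$ appearing in $\mathscr{L}_0^{(2)}|_F$, and the sign $(-1)^{n_2}$ together with $\hbar^{n_2/2}$ assemble into $(-\hbar^{1/2})^{n_2}$. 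Collecting the surviving $u_1$-powers produces the claimed $u_1^{n_2}(-\hbar^{1/2})^{n_2}(\mathscr{L}_0\otimes 1)$.

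The main technical obstacle is the careful bookkeeping of framing weights on the tautological bundles --- in particular, correctly accounting for how $\mathscr{L}_0$ interpreted on the standalone factor $\qv(\dv',\delta_0)$ differs from the corresponding subbundle of the ambient $\mathscr{L}_0^{(2)}$ on $\qv(n,2\delta_0)$, so that the final $u_1$-power in the answer comes out to $u_1^{n_2}$ rather than $u_1^{n_1+n_2}$. The remaining calculations are routine applications of the virtual localization formula and the definitions of $T^{1/2}$, $N_-$, and $\Delta_{\delta_0,\delta_0}$.
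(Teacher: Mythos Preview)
Your proposal is correct and follows essentially the same route as the paper's proof: split the tautological bundles on $F$, compute the relevant determinants, show the $u_2$-exponent is $-\tfrac{1}{2}\sum_i(\dv_i'-\dv_{i-1}')^2\le 0$ with equality iff $\dv'$ is constant, and then read off the surviving factor in the constant case using $\wedge^\bullet(N_-^\vee)\to 1$. The only organizational difference is that the paper computes $\det N_-$ and $\det T^{1/2}_{\neq 0}$ separately and combines them, whereas you use the shortcut $\sqrt{\det T^{1/2}_{\neq 0}/\det N_-}=\hbar^{r_+/2}\det T^{1/2}_{>0}\cdot\hbar^{-r_+}$ coming from $N_-=T^{1/2}_{<0}\oplus\hbar^{-1}(T^{1/2}_{>0})^\vee$; this makes the telescoping in the constant case a bit cleaner but is otherwise the same computation.
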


\begin{proof}
   At a fixed component $F$, all the tautological bundles on decompose as $\tb_{i}=\tb_{i}'\oplus \tb_{i}''$  and $\mathcal{W}_{i}= \mathcal{W}_{i}' \oplus \mathcal{W}_{i}''$ where $u_1$ (resp. $u_2$) scales the first (resp. second) summands. So, by our choice of polarization, we have
    \begin{multline*}
T^{1/2}_{\neq 0} |_{F}=\hbar \sum_{i=0}^{l=1} \left( \Hom(\tb'_{i+1},\tb''_{i}) + \Hom(\tb_{i+1}'', \tb_{i}') \right)  \\ + \hbar \Hom(\tb'_{0},\mathcal{W}_{0}'') + \hbar \Hom(\tb''_{0},\mathcal{W}'_{0}) -  \hbar \sum_{i=0}^{l-1} \left( \Hom(\tb'_{i},\tb''_{i})+  \Hom(\tb''_{i},\tb'_{i}) \right)
    \end{multline*}
   Our choice of chamber gives that $u_2/u_1$ is a repelling weight. Thus
   \begin{multline*}
       N_{-}|_{F}=\hbar\sum_{i=0}^{l-1} \left( \Hom(\tb_{i}',\tb_{i+1}'') + \Hom(\tb'_{i+1},\tb''_{i}) \right) \\
       + \hbar\Hom(\mathcal{W}_{0}',\tb''_{0}) + \hbar \Hom(\tb_{0}',\mathcal{W}_{0}'') - \hbar\sum_{i=0}^{l-1}\left( \Hom(\tb_{i}',\tb_{i}'') + \Hom(\tb_{i}',\tb_{i}'') \right)
\end{multline*}
and
\begin{equation*}
    T^{1/2}_{>0} |_{F}=\sum_{i=0}^{l=1} \hbar\Hom(\tb_{i}'', \tb_{i+1}')  + \hbar \Hom(\tb''_{0},\mathcal{W}'_{0}) -  \hbar \sum_{i=0}^{l-1}  \Hom(\tb''_{i},\tb'_{i}) 
\end{equation*}
So $\text{rank}(T^{1/2}_{>0}|_{F})=\dv_{0}''+\sum_{i=0}^{l-1} (\dv_{i+1}'-\dv_{i}') \dv_{i}''$.
Using the identity $\det(\Hom(\tb,\mathcal{W}))=\det(\mathcal{W})^{\text{rank}(\tb)}\det(\mathscr{V})^{-\text{rank}(\mathcal{W})}$ we have
\begin{align*}
   \left( \det(\tb_{0})  \sqrt{\frac{\det T^{1/2}_{\neq 0}}{\det N_{-}}} \right)\bigg |_{F}=\det(\tb'_{0}) u_{1}^{\dv''_{0}} (\hbar^{1/2})^{\dv''_{0}+ \sum_{i=0}^{l-1} (\dv'_{i} \dv''_{i+1} - \dv'_{i} \dv''_{i})}\prod_{i=0}^{l-1} \det(\tb_{i}'')^{\dv'_{i}-\dv'_{i-1}} \det (\tb'_{i})^{\dv''_{i+1}-\dv''_{i}}
\end{align*}

Since $\dv'+\dv''=(n,n,\ldots,n)$, the total power of $u_2$ in the above expression is 
$$
\sum_{i=0}^{l-1} \dv''_{i}(\dv'_{i}-\dv'_{i-1})=\sum_{i=0}^{l-1} (n-\dv'_{i})(\dv'_{i}-\dv'_{i-1}) = -\frac{1}{2} \sum_{i=0}^{l-1} (\dv_{i-1}'-\dv_{i}')^{2}
$$ 
which is always nonpositive and is zero if and only if $F \subset \qv^{0} \times \qv^{0}$. Since $\lim_{u_2 \to \infty} \extpow(N_{-}^{\vee}|_{F})=1$, this proves the first part of the lemma.

For the second part, suppose $\dv'=(n_1,n_1,\ldots,n_1)$ and $\dv''=(n_2,n_2,\ldots,n_2)$. Then the above calculation shows that
$$
\lim_{u_2 \to \infty} \Delta^{-1} \mathscr{L}_{0}^{(2)}=  u_1^{n_2} (-\hbar^{1/2})^{n_2}\mathscr{L}_{0} \otimes 1
$$
\end{proof}

\section{Derivation of main formula}

We now have all the ingredients needed to prove the main theorem. Recall the quantities $N_{\lambda}$ and $C^{(i)}_{m,n}$ from section \ref{symsec}.

\begin{thm}\label{mainthm}
We have
     \begin{multline*}
\sum_{\substack{\lambda \\ \core_{l}(\lambda)=\emptyset}} N_{\lambda}^{-1} H_{\lambda} \cver{\tau_{0}}_{\lambda}  = \\
\exp\left(\sum_{i=0}^{l-1}\sum_{n \geq 1} \frac{C^{(i)}_{0,n} p^{(i)}_{n}}{n (1-t_{1}^{nl})(1-t_{2}^{nl})} \left(1-\frac{(-\hbar^{1/2} u)^{n} - (\hbar u z_{0} \ldots z_{l-1})^{n}}{(\hbar^{1/2})^{n}-(-z_{0} \ldots z_{l-1})^n} \right)   \right)
\end{multline*}
\end{thm}

\begin{proof}    

We first set up the notations for the proof. Let $\qv^{(2)}_{\emptyset}=\bigsqcup_{n}\qv((n,n,\ldots,n),2 \delta_{0})$ and let $\bA$ be the (rank two) framing torus, which has coordinates $u_1$ and $u_2$. The superscript $(2)$ refers to the size of the framing, while the subscript $\emptyset$ refers to the fact that the $K$-theory of $\qv^{(2)}_{\emptyset}$ has a basis given by wreath Macdonald polynomials with \textit{empty l-core}. Let $\tau^{(2)}_{0}$ denote the descendant from Definition \ref{descdef} on $\qv^{(2)}_{\emptyset}$ corresponding to the $0$th tautological bundle. Let $\ver{\tau^{(2)}_{0}}$ denote the corresponding descendant vertex function. Let $\Psi^{(2)}$ denote the capping operator for $\qv^{(2)}_{\emptyset}$. Let $\mathscr{L}_{0}^{(2)}$ denote the operator of tensor multiplication in $K$-theory by the $0$th tautological line bundle of $\qv^{(2)}_{\emptyset}$.

Similarly, we have $\qv^{(1)}_{\emptyset}=\bigsqcup_{n} \qv((n,n,\ldots,n), \delta_{0})$, $\tau^{(1)}_{0}$, $\mathscr{L}_{0}^{(1)}$, and $\cver{\tau^{(1)}_{0}}$. This last object is the capped descendant vertex appearing in the left hand side of Theorem \ref{mainthm}.

From the inclusions, $\qv^{(1)}_{\emptyset}\times \qv^{(1)}_{\emptyset} \hookrightarrow (\qv^{(2)}_{\emptyset})^{\bA} \hookrightarrow \qv^{(2)}_{\emptyset}$ we have the restriction maps
$$
\begin{tikzcd}
    K_{\bT}\left(\qv^{(2)}_{\emptyset}\right) \arrow{r}{\text{Res}}  \arrow[bend right=30]{rr}{\text{Res}_{\emptyset}} & K_{\bT}\left((\qv^{(2)}_{\emptyset})^{\bA}\right) \arrow{r} & K_{\bT}\left(\qv^{(1)}_{\emptyset} \times \qv^{(1)}_{\emptyset}\right) 
\end{tikzcd}
$$

By Theorem \ref{VPsiV} and Theorem \ref{lfv}, we have
\begin{equation}\label{lfveq}
\tau_{0}^{(2)}= \Psi^{(2)} \ver{\tau^{(2)}_{0}}
\end{equation}
Applying $\text{Res}$ to \eqref{lfveq} and multiplying by the class $\Delta:=\Delta_{\delta_{0},\delta_{0}}$ from Proposition \ref{capfactor} gives

\begin{equation}\label{lfveq2}
    \Delta^{-1} \text{Res } \tau_{0}^{(2)}=  \left(\Delta^{-1} \Psi^{(2)} \Delta \right) \left(\Delta^{-1}  \ver{\tau^{(2)}_{0}}\right)
\end{equation}
Since the objects in the right hand side are defined by localization, we will omit the $\text{Res }$ from the right hand side. Next we take the limit of \eqref{lfveq2} as $u_2 \to \infty$. We consider each piece of \eqref{lfveq2} separately.

\begin{enumerate}
    \item First consider $\Delta^{-1} \text{Res } \tau^{(2)}_{0}$. We have
    \begin{align}
    \lim_{u_2 \to \infty} \Delta^{-1} \text{Res } \tau^{(2)}_{0} &=\lim_{u_2 \to \infty} \Delta^{-1} \mathscr{L}_{0}^{(2)} \left(\mathscr{L}_{0}^{(2)}\right)^{-1} \text{Res } \tau^{(2)}_{0} \nonumber \\
    &=\lim_{u_2 \to \infty} \Delta^{-1} \mathscr{L}_{0}^{(2)} \left( \mathscr{L}_{0}^{(2)} \right)^{-1} \text{Res}_{\emptyset} \, \tau^{(2)}_0 \label{limlhs}
     \end{align}
    where the second equality follows from Lemma \ref{limlb}. Under the identification with symmetric functions of Proposition \ref{Ksym}, we have
$$
\text{Res}_{\emptyset} \, \tau^{(2)}_0=\sum_{\substack{\lambda, \mu \in \parts \\ \core_{l}(\lambda)= \emptyset \\ \core_{l}(\mu)= \emptyset}} N_{\lambda}^{-1} N_{\mu}^{-1} H_{\lambda}[x] H_{\mu}[y] \prod_{\substack{\square=(a,b) \in \lambda \sqcup \mu \\ a-b \equiv 0 \mod l}}(1+u_{\square} t_{1}^{1-b} t_{2}^{1-a})
$$
where $u_{\square}=u_{1}$ if $\square \in \lambda$ and $u_{\square}=u_{2}$ if $\square \in \mu$. By Lemma \ref{limlb}, the explicit formula for $\mathscr{L}_{0}^{(2)}$, and the homogeneity of wreath Macdonald polynomials, \eqref{limlhs} is equal to
\begin{align*}
 &\sum_{\substack{\lambda, \mu \in \parts \\ \core_{l}(\lambda)= \emptyset \\ \core_{l}(\mu)= \emptyset}} N_{\lambda}^{-1} N_{\mu}^{-1} H_{\lambda}[x] (-u_1 \hbar^{1/2})^{|\mu|/l}H_{\mu}[y] \prod_{\substack{\square=(a,b) \in \lambda \\ a-b \equiv 0 \mod l}}(1+u_1 t_{1}^{1-b} t_{2}^{1-a}) \\ 
     &=\sum_{\substack{\lambda, \mu \in \parts \\ \core_{l}(\lambda)= \emptyset \\ \core_{l}(\mu)= \emptyset}} N_{\lambda}^{-1} N_{\mu}^{-1} H_{\lambda}[x] H_{\mu}[-u_1 \hbar^{1/2} y] \prod_{\substack{\square=(a,b) \in \lambda \\ a-b \equiv 0 \mod l}}(1+u_1 t_{1}^{1-b} t_{2}^{1-a})
\end{align*}

    \item Second, we consider $\Delta^{-1} \Psi^{(2)} \Delta$. By Proposition \ref{capfactor}, 
$$
\lim_{u_2 \to \infty} \Delta^{-1} \Psi^{(2)} \Delta =  J(z) \Psi^{(1)}(z') \otimes \Psi^{(1)}(z'')
$$

\item The component of $\lim_{u_2 \to \infty}\Delta^{-1} \ver{\tau^{(2)}_{0}} $ for the summand $K_{\bT}\left(\qv^{(1)}_{\emptyset} \times \{pt\}\right)$ of $K_{\bT}(\qv^{(2)}_{\emptyset})$ is $\ver{\tau^{(1)}_{0}}(z') \otimes 1$. This follows from Corollary \ref{limvertex2} and Lemma \ref{limlb}.

\end{enumerate}

Applying the last three computations to \eqref{lfveq2}, we get
\begin{equation}\label{lfveq3}
\Psi^{(1)}(z') \otimes \Psi^{(1)}(z'') \lim_{u_2 \to \infty} \Delta^{-1} \ver{\tau^{(2)}_{0}}= J(z)^{-1} \lim_{u_2 \to \infty} \Delta^{-1} \text{Res}_{\emptyset} \, \tau^{(2)}_{0}
\end{equation}
The component of the left side of \eqref{lfveq3} for the summand $K_{\bT}\left(\qv^{(1)}_{\emptyset} \times \{pt\}\right)$ (in the language of symmetric functions, this means the degree zero part of the second tensorand) is
$$
\left(\Psi^{(1)}(z') \otimes 1 \right)\ver{\tau^{(1)}_{0}}(z') \otimes 1 = \cver{\tau^{(1)}_{0}}(z') \otimes 1
$$
Considering this in the ring of symmetric functions and doing the same for the right hand side of \eqref{lfveq3} gives 
\begin{multline}
\sum_{\substack{\lambda \\ \core_{l}(\lambda)=\emptyset}} N_{\lambda}^{-1} H_{\lambda}[x] \cver{\tau_{0}}_{\lambda}|_{z_{0}=z_{0}(-\hbar^{1/2})} \\
=\left(J(z)^{-1} \sum_{\substack{\lambda, \mu \\ \core_{l}(\lambda)=\emptyset \\ \core_{l}(\mu)=\emptyset}} N_{\lambda}^{-1} N_{\mu}^{-1} H_{\lambda}[x] H_{\mu}[-u_1\hbar^{1/2} y]\tau_{0}|_{\lambda} \right)\bigg|_{y=0} \label{Jinv}
\end{multline}

From Theorem \ref{fusion}, $J(z)^{-1}|_{K_{\bT}(\qv^{(1)}_{\emptyset}) \otimes K_{\bT}(\qv^{(1)}_{\emptyset})}$, followed by $y=0$, acts by $p^{(i)}_{n}[x] \mapsto p^{(i)}_{n}[x]$ and $p^{(i)}_{n}[y]\mapsto  \frac{(\hbar^{-n/2}-\hbar^{n/2})}{1-(z_0\ldots z_{l-1})^{-n}} p^{(i)}_{n}[x]$. Combining this with Lemma \ref{classicalformula} shows that the right side of \eqref{Jinv} is equal to

\begin{multline*}
\left( J(z)^{-1} \exp \left(\sum_{i=0}^{l-1} \sum_{n \geq 1} \frac{C^{(i)}_{0,n}}{n(1-t_{1}^{nl})(1-t_2^{nl})} \left((1-(-u)^{n}) p^{(i)}_{n}[x] + (-u \hbar^{1/2})^{n}p^{(i)}_{n}[y] \right)\right)\right)\Bigg|_{y=0} \\
=\exp \left(\sum_{i=0}^{l-1} \sum_{n \geq 1} \frac{C^{(i)}_{0,n} p^{(i)}_{n}[x]}{n(1-t_{1}^{nl})(1-t_2^{nl})}\left( (1-(-u)^{n})  + \frac{(-u)^{n}(1-\hbar^{n})}{1-(z_0\ldots z_{l-1})^{-n}} \right) \right) 
\end{multline*}

Shifting $z_{0}\mapsto z_{0}(-\hbar^{-1/2})$ and doing elementary algebra gives
$$
\sum_{\substack{\lambda \\ \core_{l}(\lambda)=\emptyset}} N_{\lambda}^{-1} H_{\lambda}[x] \cver{\tau_{0}}_{\lambda} = \exp \left(\sum_{i=0}^{l-1} \sum_{n \geq 1} \frac{C^{(i)}_{0,n} p^{(i)}_{n}[x]}{n(1-t_{1}^{nl})(1-t_2^{nl})}\left( 1-\frac{(-\hbar^{1/2} u)^{n}-(\hbar u z_0\ldots z_{l-1})^{n}}{(\hbar^{1/2})^{n}-(-z_0\ldots z_{l-1})^{n}} \right) \right)
$$
the main formula.

\end{proof}

\printbibliography

\pagebreak

\noindent Jeffrey Ayers \\
University of North Carolina \\
Chapel Hill, NC \\
jeff97@live.unc.edu

\vspace{0.5cm}

\noindent Hunter Dinkins \\
Massachusetts Institute of Technology\\
Cambridge, MA \\
hunte864@mit.edu

\end{document}